\documentclass[10pt,a4paper]{article}

\usepackage{float}
\usepackage{graphicx}
\usepackage{amssymb,amsmath,amsthm}
\usepackage{esint}
\usepackage{tikz}
\usepackage{bbm,dsfont,bm}
\usepackage{color}
\usepackage{enumerate,fancyhdr}
\usepackage[english]{babel}
\usepackage[hypertexnames=false]{hyperref}
\usepackage{thmtools}
\usepackage[left=2cm,right=2cm,top=2cm,bottom=2cm]{geometry}

\newtheorem{theorem}{Theorem}[section]
\newtheorem{lemma}[theorem]{Lemma}

\newtheorem{definition}[theorem]{Definition}

\newtheorem{remark}[theorem]{Remark}
\newtheorem{proposition}[theorem]{Proposition}
\newtheorem*{theorem*}{Theorem}

\newcommand{\norm}[1]{{\left\Vert{#1}\right\Vert}}

\numberwithin{equation}{section}

\title{Time-periodic solutions for viscous fluids interacting with nonlinear Koiter plates}
\author{Claudiu M\^{i}ndril\u{a}\thanks{``Gheorghe Mihoc--Caius Iacob'' Institute of Mathematical Statistics and Applied Mathematics of the Romanian Academy, Calea 13 Septembrie 13, Bucharest, Romania. \texttt{mndclaudiu@gmail.com}}}
\date{\today}

\begin{document}

\maketitle

\begin{abstract}
We prove the existence of time-periodic weak solutions for a fluid-structure interaction
system coupling the incompressible Navier-Stokes equations in a three-dimensional moving
domain with a nonlinear Koiter plate equation on its upper boundary. The lateral boundary
is space-periodic ($\omega = \mathbb{T}^{2}$), a natural setting for flow in pipes and
channels of periodic cross-section driven by a time-periodic pressure gradient, and the
fluid satisfies a no-slip coupling condition at the moving interface. The elastic energy
of the plate is governed by the nonlinear Koiter model, which yields an $H^2$-coercive
operator and accounts for both membrane and bending effects.

To the best of our knowledge, this is the first result on time-periodic weak solutions
for a fluid-structure interaction system with a \emph{nonlinear} elastic energy.
The main novelty, compared to our earlier works~\cite{Claudiu22, mindrila-schwarz-shell}
on the linear case — a linear elastic plate and a linear Koiter shell respectively —
is the replacement of a two-stage fixed-point procedure — a Leray-Schauder argument
at the discrete level followed by a set-valued Kakutani-Glicksberg-Fan argument at
the continuous level — by a \emph{single} Leray-Schauder fixed point applied directly
to the fully coupled Galerkin system. This reduction is not merely a simplification:
the nonlinearity of the Koiter energy destroys the convexity of the solution map on
which Kakutani-Fan relies, making the two-stage approach of~\cite{Claudiu22}
unavailable and the single fixed point the only viable strategy.
The single fixed point is made consistent by the operator $\mathcal{P}_\varepsilon$,
which artificially periodises the prescribed geometry so that the energy comparison
$E_n(0) \leq E_n(T)$ is well-posed at each step of the approximation.
A further key ingredient is the coercivity identity
$\langle K'(\eta), \eta \rangle \ge 2K(\eta)$,
which underlies the uniform-in-time energy estimate and holds for flat reference
configurations but fails for general Koiter shells, explaining why the present paper
treats plates rather than shells.
\end{abstract}

\medskip
\noindent\textbf{Keywords.} fluid-structure interaction; time-periodic solutions; Navier-Stokes equations; nonlinear Koiter plate; moving domain; Galerkin approximation.

\medskip
\noindent\textbf{MSC 2020.} 35Q30; 74F10; 76D05; 	74B20 .

\section{Introduction}\label{sec:intro}

\subsection{Background and motivation}

The mathematical analysis of fluid-structure interaction (FSI) systems has attracted significant
attention over the past two decades, driven both by the richness of the underlying mathematics
and by physical applications ranging from cardiovascular haemodynamics to pipeline engineering.
A prototypical model couples the incompressible Navier-Stokes equations in a time-dependent
domain with an elastic structure that forms part of the boundary, the two subsystems being
linked by kinematic and dynamic coupling conditions at the moving interface.

For the Cauchy problem, the existence of weak solutions is by now well understood in a number
of configurations. In the case of a viscous fluid coupled with an elastic plate governed by a
linear operator, weak solutions were first constructed by Chambolle, Desjardins, Esteban and
Grandmont~\cite{chambolle2005existence} and subsequently extended and refined in
\cite{Gr08, LR14, Lengeler16}. The nonlinear case, where the elastic energy of the structure is
governed by the nonlinear Koiter model, was treated in the fundamental work of
Muha and Schwarzacher~\cite{MS22}, who established existence and regularity of weak solutions
for a viscous fluid interacting with a nonlinear shell in three dimensions, extending the
earlier contribution of Muha and \v{C}ani\'{c}~\cite{MC15} which dealt with nonlinear Koiter
membrane energy. We also mention the work of Lengeler and R\r{u}\v{z}i\v{c}ka~\cite{LR14}
on generalized Newtonian fluids with Koiter-type shells, and the variational approach of
Bene\v{s}ov\'{a}, Kampschulte and Schwarzacher~\cite{BKS23}.

The \emph{nonlinear Koiter model} used throughout this paper is one of the most widely adopted
models in the mathematical theory of thin elastic shells. It was introduced and rigorously
justified by Ciarlet and collaborators~\cite{ciarlet2000theory, Ciarlet05, ciarlet2018nonlinear},
and it accounts for both membrane and bending effects through the change-of-metric tensor
$\mathbb{G}(\eta)$ and the change-of-curvature tensor $\mathbb{R}^\sharp(\eta)$.
In the flat geometry considered here, these reduce to
$\mathbb{G}_{ij}(\eta) = \partial_i\eta\,\partial_j\eta$ and
$\mathbb{R}^\sharp_{ij}(\eta) = \partial_{ij}\eta$,
and the Koiter energy takes the simplified form
\[
K(\eta) \simeq \int_\omega |\nabla\eta|^4 + |\nabla^2\eta|^2\,dA,
\]
which is coercive in $H^2(\omega)$; see also~\cite{ciarlet-roquefort-2001} for the
coercivity of the elasticity tensor $\mathcal{A}$ in this context.
The $H^2$-coercivity is essential throughout our analysis, as it provides the
regularity needed to control both the fluid and the plate uniformly in time.

The \emph{time-periodic} problem, by contrast, has received comparatively little attention
despite its physical relevance. A periodic forcing arises naturally in the modelling of
flow in elastic pipes and blood vessels, where the driving pressure is imposed by a
periodically beating heart, or in industrial pipelines subject to cyclic boundary loads.
For the Navier-Stokes equations alone, time-periodic solutions have been studied since the
pioneering work of Prouse~\cite{Pr63}; see also the results of Galdi and
collaborators~\cite{GS06, G13, G20} on flows around moving bodies. In the FSI context,
Casanova~\cite{Casanova} proved the existence of time-periodic strong solutions
for a fluid-structure system with a damped beam, exploiting maximal $L^p$-regularity.
The first existence result for time-periodic \emph{weak} solutions to a FSI system
with a nonlinearly coupled moving boundary was obtained in our earlier
work~\cite{Claudiu22}, where we treated the case of an \emph{elastic plate governed
by a linear operator}. This was subsequently extended in~\cite{mindrila-schwarz-shell}
to the case of a \emph{linear Koiter shell}, allowing for a curved reference geometry
and dynamic pressure boundary conditions on a fixed part of the boundary.
We also mention the recent work of Kreml, M\'{a}cha, Ne\v{c}asov\'{a} and
Trifunovi\'{c}~\cite{kreml} on time-periodic solutions for compressible fluids
interacting with viscoelastic beams, and the work of Mosny, Muha, Schwarzacher and
Webster~\cite{mosny2024time} on time-periodic solutions for general hyperbolic-parabolic
systems which provides an abstract framework applicable to linearised FSI.

The present paper, based on author's PhD thesis~\cite{thesis-mindrila} is, to the best of our knowledge, the \emph{first} to establish
time-periodic weak solutions for a FSI system with a \emph{nonlinear} elastic energy.
The prior works~\cite{Claudiu22, mindrila-schwarz-shell} treat the linear case —
a linear elastic plate and a linear Koiter shell respectively — and both rely on
a two-stage fixed-point procedure in which a set-valued Kakutani-Glicksberg-Fan
argument at the continuous level is used to recover the geometry coupling.
This approach is fundamentally blocked by the nonlinear Koiter energy, as explained
in Section~\ref{ssec:a-priori-section-paper}, and a genuinely new fixed-point strategy
is required.

\subsection{Setting and main result}

We consider an incompressible viscous fluid occupying the moving domain
\[
\Omega_{\eta(t)} := \left\{ (x,y,z)\in\mathbb{R}^{3} : (x,y)\in\omega,\; 0 < z < 1+\eta(t,x,y) \right\},
\]
whose upper boundary deforms in time.
The deformation is driven by a thin elastic plate, whose displacement from its
reference position is described by a scalar function $\eta(t,\cdot):\omega\to\mathbb{R}$,
where $\omega = \mathbb{T}^2$ is the two-dimensional torus. The choice $\omega = \mathbb{T}^2$
imposes space-periodic lateral boundary conditions on the fluid, a natural model
for flow in pipes and channels of periodic cross-section driven by a time-periodic
pressure gradient. The fluid satisfies the incompressible Navier-Stokes equations
in $\Omega_{\eta(t)}$, with a no-slip coupling condition at the moving interface,
and is subject to an external, time-periodic body force $\mathbf{f}$, with period $T$. The plate satisfies a
wave-type equation driven by a time-periodic load $g$ and the nonlinear Koiter
elastic energy
\[
K(\eta) \simeq \int_\omega |\nabla\eta|^4 + |\nabla^2\eta|^2\,dA,
\]
which is $H^2$-coercive. 

It seems natural that, therefore, $\mathbf{u}$ and $\eta$ should be
time-periodic with the same period $T>0$.
The aim of this work is to prove that such a solution exists.

The total energy of the system at time $t$ is
\[
E(t) := \frac{1}{2}\int_{\Omega_{\eta(t)}} |\mathbf{u}|^2\,dx
       + \frac{1}{2}\int_\omega |\partial_t\eta|^2\,dA + K(\eta(t)),
\]
the three terms being respectively the kinetic energy of the fluid, the kinetic
energy of the plate, and the elastic energy. Two further quantities enter the
main result: the combined forcing norm
\[
C(\mathbf{f},g) := \int_{I}\!\!\int_{\Omega_{\eta(t)}} |\mathbf{f}|^2\,dx\,dt
                 + \int_{I}\!\!\int_\omega |g|^2\,dA\,dt,
\]
and the conserved mean displacement $m := \int_\omega \eta\,dA$, which is
constant in time as a consequence of the incompressibility constraint and the
no-slip condition (see Remark~\ref{rmk:int-eta-constant-time}).

The main result of this paper, Theorem~\ref{thm:main}, states that if $C(\mathbf{f},g)$
and $m^2$ are bounded by a sufficiently small constant depending only on the data of
the problem, then there exists at least one time-periodic weak solution $(\mathbf{u},\eta)$
satisfying the uniform energy estimate
\[
\sup_{t \in [0,T]} E(t) \lesssim C(\mathbf{f},g)^2 + C(\mathbf{f},g) + m^2.
\]
The smallness condition is not merely a technical artifact: it ensures that the plate
displacement remains bounded away from zero ($\|\eta\|_{L^\infty} < \kappa < 1$),
preventing the fluid domain from degenerating, and it reflects the genuinely nonlinear
character of the problem in which large forcing could drive the plate to contact with
the bottom of the cavity.

\subsection{Proof strategy and new contributions}

The proof combines three ingredients, each of which requires genuinely new arguments with
respect to the linear case treated in~\cite{Claudiu22}.

\medskip
\noindent\textit{Uniform energy estimate.} For time-periodic solutions there is no
decay mechanism, and one must obtain a bound on $\sup_{t} E(t)$ that is independent
of the initial data. This is achieved by testing the elastic equation with $\eta$ itself,
which requires constructing a corresponding divergence-free test function for the fluid
equation that extends $\eta$ to the interior of $\Omega_{\eta(t)}$. The extension
operator $\mathcal{F}_\eta$ (Proposition~\ref{chap-appendix-prop-extension-plate}) provides
this, and the key estimate \eqref{eqn:formal-claim} is then a consequence of the $H^2$-coercivity
of the nonlinear Koiter operator $K'(\eta)$ and the \emph{coercivity identity}
\begin{equation}\label{eqn:Kprime-coercivity}
\langle K'(\eta), \eta \rangle \ge 2K(\eta),
\end{equation}
which follows by a direct computation from the explicit form of $K(\eta)$ in the
\emph{flat} case $\Gamma = \omega \times \{1\}$.
We stress that \eqref{eqn:Kprime-coercivity} is specific to the flat reference geometry
and does \emph{not} hold for general Koiter shells, where the Koiter energy involves
the full change-of-curvature tensor relative to a curved reference surface.
This is the precise reason why the present paper treats \emph{Koiter plates} (flat
reference configuration) rather than \emph{Koiter shells} (curved reference
configuration): the identity \eqref{eqn:Kprime-coercivity} is the cornerstone of
the uniform energy estimate, and without it the entire strategy breaks down.
The presence of the nonlinear term $|\nabla\eta|^4$ in $K(\eta)$ makes this argument
qualitatively different from the linear case, where $K(\eta) = \|\nabla^2\eta\|^2_{L^2}$
and \eqref{eqn:Kprime-coercivity} holds trivially with equality.

\medskip
\noindent\textit{A single fixed point simultaneously enforcing periodicity and coupling.}
A central novelty of the present paper, compared to our earlier work~\cite{Claudiu22},
is the reduction from \emph{two} fixed-point arguments to \emph{one}.
In~\cite{Claudiu22}, existence was established via a two-stage procedure:
a first Leray-Schauder fixed point at the discrete Galerkin level to obtain
periodicity of the approximate solution, followed by a second fixed point
— of set-valued type, via the Kakutani-Glicksberg-Fan theorem — at the continuous
level to recover the coupling $\delta = \eta$ in the limit.
Here we show that both goals can be achieved simultaneously by a \emph{single}
Leray-Schauder fixed point applied directly to the fully coupled Galerkin system.

The key mechanism is the operator $\mathcal{P}_\varepsilon$
(Lemma in Section~\ref{sec:proof}), which \emph{artificially forces the prescribed
geometry $\delta_n$ to be time-periodic} by replacing it with a linear interpolant
on $[T-\varepsilon, T]$ that joins $\delta_n(T-\varepsilon)$ to $\delta_n(0)$.
This periodisation is essential: without it, the comparison of $E_n(0)$ and
$E_n(T)$ — which requires integrating over the same domain at both endpoints
— would be ill-defined when $\Omega_{\delta_n(0)} \ne \Omega_{\delta_n(T)}$.
The mapping $\mathcal{T}\colon \mathbf{a}_n \mapsto \mathbf{b}_n$ then assigns
to each periodised prescribed geometry $\delta_n$ the solution $(\mathbf{u}_n,\eta_n)$
of the decoupled system with initial values $\mathbf{b}_n(0) = \mathbf{a}_n(T)$,
$\mathbf{b}_n'(0) = \mathbf{a}_n'(T)$. A fixed point $\mathbf{a}_n = \mathbf{b}_n$
simultaneously gives $\eta_n = \delta_n$ (coupling) and $\eta_n(0) = \eta_n(T)$
(periodicity), eliminating entirely the need for a second fixed point.

This single fixed-point strategy is not merely a simplification of the approach
in~\cite{Claudiu22}: it is \emph{forced} by the nonlinear Koiter energy.
In~\cite{Claudiu22}, the second fixed point — of set-valued type via the
Kakutani-Glicksberg-Fan theorem — relied crucially on the \emph{convexity} of
the solution map $\delta \mapsto (\mathbf{u}, \eta)$ at the continuous level.
This convexity in turn depended on the \emph{linearity} of the plate equation:
with a linear elastic operator, the solution set corresponding to a given geometry
$\delta$ is convex, which is exactly what is needed to apply Kakutani-Fan.
For the nonlinear Koiter energy, the plate equation is nonlinear and the solution
map is no longer convex; Kakutani-Fan is therefore unavailable, and the two-stage
approach of~\cite{Claudiu22} is fundamentally blocked. The single Leray-Schauder
fixed point at the Galerkin level — made consistent by the artificial periodisation
$\mathcal{P}_\varepsilon$ — is, as far as we are aware, the only viable strategy
to simultaneously recover periodicity and coupling in this setting.

\medskip
\noindent\textit{Refined $L^2$ compactness argument.} Passing to the limit $n\to\infty$
in the Galerkin scheme requires strong convergence of $(\mathbf{u}_n, \partial_t\eta_n)$
in $L^2_t L^2_x$. This is the most technically demanding part of the proof and goes
well beyond what is needed for the Cauchy problem. The key tool is the divergence-free
extension operator $\mathcal{F}_\eta$, which lifts a scalar function $\xi$ on $\omega$
to a divergence-free vector field on $\Omega_{\eta(t)}$ with $\text{tr}_\eta(\mathcal{F}_\eta\xi)
= \xi\mathbf{e}_3$ (Proposition~\ref{chap-appendix-prop-extension-plate}). This operator
allows one to decompose the $L^2$ norm of the fluid velocity as
\[
\int_{\Omega_{\eta_n(t)}} |\mathbf{u}_n|^2\,dx
= \int_{\Omega_{\eta_n(t)}} \mathbf{u}_n \cdot \mathcal{F}_{\eta_n}(\partial_t\eta_n)\,dx
+ \int_{\Omega_{\eta_n(t)}} \mathbf{u}_n \cdot
  \bigl(\mathbf{u}_n - \mathcal{F}_{\eta_n}(\partial_t\eta_n)\bigr)\,dx.
\]
The first term captures the component of $\mathbf{u}_n$ aligned with the plate velocity,
and its convergence follows from a mollification argument and the equicontinuity in time
of a family of linear functionals acting on the Galerkin solutions.
Convergence of the second term — the purely solenoidal remainder — is substantially
harder and requires the uniform convergence estimate~\eqref{eqn:uniform-conv-h1-piola},
which is established via the Galerkin projection $\mathcal{P}_n$ composed with the
Piola map $\mathcal{J}_{\delta_{\tilde\sigma}}$, together with a H\"{o}lder
equicontinuity estimate that uses $\mathbf{u}_n \in L^{8/3}_t L^4_x$ and the explicit
computation of $\partial_t \mathcal{J}_{\eta_n}$. The use of the Galerkin projection
$\mathcal{P}_n$ — rather than a direct approximation in the limit space — is essential
to maintain compatibility with the finite-dimensional system during the limit passage
and constitutes a further novelty of our approach; see also~\cite{MS22, LR14} for
related compactness arguments in the Cauchy and linear time-periodic settings.

\subsection{Open problems}

The extension of the present results to \emph{general Koiter shells}
(i.e.\ non-flat reference configurations) remains open. For curved reference surfaces,
the Koiter energy involves the full change-of-curvature tensor, and the identity
$\langle K'(\eta),\eta\rangle \ge 2K(\eta)$, which is the cornerstone of our uniform
energy estimate (see \eqref{eqn:Kprime-coercivity} and the discussion in
Section~\ref{ssec:a-priori-section-paper}), \emph{fails} in general.
Establishing an analogue of this identity for curved shells, or finding an alternative
route to the uniform bound on $\sup_t E(t)$, appears to be the key obstacle;
see \cite{KamSchSpe23} for recent progress on nonlinear shells in the Cauchy problem setting.

One possible avenue to circumvent this difficulty is to replace the Koiter model with
an alternative shell theory for which better coercivity properties hold on curved
reference configurations. A natural candidate is the \emph{Cosserat (micropolar) shell
model}, in which the shell kinematics are enriched by an independent rotation field.
This additional structure leads to energy functionals with improved coercivity and
has been studied rigorously by Ghiba, B\^{i}rsan, Neff and
collaborators~\cite{GhibaBirsanNeff2023, GhibaNeff2023, MohammadiSaemGhibaNeff2023};
whether such models are compatible with the FSI framework and permit a uniform
energy estimate of the type \eqref{eqn:Kprime-coercivity} is an open question. See also~\cite{ghiba2026nonlinearkirchhoffloveshellmodels}.

\section{Setting}\label{sec:setting}
\subsection{Geometry}

Let $\Omega \subset \mathbb{R}^{3}$ be a cube of radius $1$, denoted by $\Omega=\left[0,1\right]^{3}$. On the lateral boundaries we will impose space-periodic conditions and thus write

\begin{equation*}
\Omega:=\left\{ \left(x,y,z\right):\ \left(x,y\right)\in\mathbb{T}^{2},\ z\in\left(0,1\right)\right\} . 
\end{equation*}
with $\mathbb{T}^{2}$ the 2 dimensional torus.

The upper part of the cube, at $z=1$ is assumed to be  the \emph{flexible part of the boundary} and we denote it by
\begin{equation}
\Gamma:=\left\{ \left(x,y,1\right):\left(x,y\right)\in\mathbb{T}^{2}\right\} .
\end{equation}
Denoting $\omega:=\mathbb{T}^{2}=\mathbb{R}^{2}/\mathbb{Z}^{2}$ the unit torus we define a parametrization of $\Gamma$ via the mapping \[
\phi:\omega\mapsto\Gamma\subset\mathbb{R}^{3},\quad\phi\left(x,y\right)=\left(x,y,1\right),\ \left(x,y\right)\in\omega.\] 

We assume further that at $\Gamma$ a thin elastic plate is attached which,  
at each time $t\in I$  evolves in the  normal direction 
\[
\mathbf{n}=\mathbf{e}_{3}=(0,0,1).
\]
Let us denote
the displacement of the elastic plate by $\boldsymbol{\eta}:\omega \mapsto \mathbb{R}^{3}$.

We can write $\boldsymbol{\eta}=\eta \mathbf{e}_{3}$ and we will study  the scalar unknown
\[
\eta: I\times \omega \mapsto \mathbb{R},\]
where $I := [0,T]$ denotes the time interval for a fixed period $T > 0$.

At each $t\in I$ we obtain a new \emph{deformed boundary}  denoted 
\begin{equation}
\Gamma_{\eta}\left(t\right):=\left\{ \left(x,y,1+\eta\left(t,x,y\right)\right):\left(x,y\right)\in\omega\right\} .
\end{equation}
which can be parametrized by 
\begin{equation*}
\phi_{\eta\left(t\right)}:\omega\mapsto\mathbb{R}^{3},\ \phi_{\eta\left(t\right)}\left(x,y\right):=\left(x,y,1+\eta\left(t,x,y\right)\right).
\end{equation*}

We may now define the  tangent vectors $\boldsymbol{\tau}_{1}^{\eta},\boldsymbol{\tau}_{2}^{\eta}$ at any point $p=\phi_{\eta\left(t\right)}\left(\theta ,z\right)\in \Gamma ^{\eta}(t)$ with $(x,y)\in \omega$ by 
\begin{equation*}
\boldsymbol{\tau}_{i}^{\eta}\left(p\right):=\left.\partial_{i}\phi_{\eta\left(t\right)}\right|_{\left(x,y\right)},\quad i=1,2
\end{equation*}
and the  outer normal vector $\mathbf{n}^{\eta}$ at $p$ via 
\begin{equation*}
\mathbf{n}^{\eta}\left(p\right):=\left.\partial_{1}\phi_{\eta\left(t\right)}\times\partial_{2}\phi_{\eta\left(t\right)}\right|_{\left(x,y\right)}.
\end{equation*}

Thus, at each $t\in I$ we obtain the moving domains $\Omega_{\eta}\left(t\right)$ given
by
\begin{equation*}
\Omega_{\eta}\left(t\right):=\left\{ \left(x,y,z\right)\in\mathbb{R}^{3}:\left(x,y\right)\in\omega,0<z<1+\eta\left(t,\theta,z\right)\right\} .
\end{equation*}
 The setting is sketched in Figure~\ref{fig:moving-domains}.

To ensure that the domains $\Omega_{\eta}\left(t\right)$ do not degenerate 
we  will assume  throughout this work that 
\begin{equation}\label{eqn:condition-eta-norm}
\left\Vert \eta\right\Vert _{L_{t,x}^{\infty}}\le \kappa<1.
\end{equation}
The condition \eqref{eqn:condition-eta-norm} can be ensured by controlling the whole energy of the system, see \eqref{eqn:energy-def} and Theorem~\ref{thm:main} and also Remark~\ref{rmk:bound-eta-norm} for further details.

\begin{figure}[H]
\centering
\begin{tikzpicture}[scale=3]

% z-axis only (clean)
\draw[->] (0,0) -- (0,1.2) node[above] {$z$};

% Parameters
\def\h{0.6}
\def\s{0.25}

% Bottom face
\draw[thick] (0,0) -- (1,0) -- (1+\s,\s) -- (\s,\s) -- cycle;

% Vertical edges
\draw[thick] (0,0) -- (0,\h);
\draw[thick] (1,0) -- (1,\h);
\draw[thick] (1+\s,\s) -- (1+\s,\h+\s);
\draw[thick] (\s,\s) -- (\s,\h+\s);

% Reference top Gamma
\draw[thick] (0,\h) -- (1,\h) -- (1+\s,\h+\s) -- (\s,\h+\s) -- cycle;
\node[right] at (1+\s,\h+0.5*\s) {$\Gamma$};

% Deformed top with FIXED endpoints
\draw[thick]
(0,\h) 
.. controls (0.5,\h+0.25) .. (1,\h)
-- (1+\s,\h+\s)
.. controls (0.5+\s,\h+\s+0.25) .. (\s,\h+\s)
-- cycle;

% Gamma^eta label: above the back hump with a small leader arrow
\node at (0.5+\s,\h+\s+0.38) {$\Gamma_{\eta}(t)$};
\draw[->] (0.5+\s,\h+\s+0.33) -- (0.5+\s,\h+\s+0.27);

% Displacement arrow: centred, short, stays within the gap below the deformed surface
\draw[->] (0.5,\h) -- (0.5,\h+0.10);
\node[right] at (0.5,\h+0.05) {$\eta$};

% Domain label
\node at (0.5,0.3) {$\Omega$};

\end{tikzpicture}
    \caption{The moving domains}
    \label{fig:moving-domains}
\end{figure}

% \begin{remark}
%     The assumption that the displacement of the shell is restricted to the normal  direction can be relaxed to include displacements in all three directions, as long as the injectivity of $\phi_{\eta}$ can be guaranteed; see  \cite[p. 6639]{galic-canic-muha} for further details. 
% \end{remark}
\subsection{The equations}\label{ssec:problem}
Let us denote, here and throughout the work, the time-space domains by  
\[
I\times\Omega_{\eta}:=\bigcup_{t\in I}\left\{ t\right\} \times\Omega_{\eta}\left(t\right).
\]

\paragraph{The fluid equations.}
We assume that at each $t\in I$ the domain $\Omega_{\eta}\left(t\right)$ is filled with  fluid, described below.
Let $\mathbf{u}:I\times\Omega_{\eta}\mapsto\mathbb{R}^{3}$ denote the velocity field of the fluid and $p:I\times \Omega_{\eta}\mapsto \mathbb{R}$ is the associated pressure field associated to $\mathbf{u}$. We assume the fluid to be homogeneous, of density $\rho_{f}>0$, incompressible, viscous with viscosity $\mu_{f}>0$ and obeying the \emph{Navier-Stokes equations}. This means that we have 

\begin{equation}\label{eqn:fluid}
 \left.\begin{array}{c}
\rho_{f}\left(\partial_{t}\mathbf{u}+\mathbf{u}\cdot\nabla\mathbf{u}\right)=\text{div}\ \sigma+\mathbf{f}\\
\text{div}\ \mathbf{u}=0
\end{array}\right\} \text{in}\ I\times\Omega_{\eta}
\end{equation}
with $\sigma$ denoting the usual \emph{Cauchy stress tensor} given by the formula 
\begin{equation}
\sigma=\sigma(x,\mathbf{u},p):=2\mu_{f}\frac{\nabla\mathbf{u}+\left(\nabla\mathbf{u}\right)^{T}}{2}-p\mathbb{I}_{3\times3}=:2\mu_{f}\mathbb{D}\mathbf{u}-p\mathbb{I}
\end{equation}
and $\mathbb{D}$ represents the symmetric gradient. 
By $\mathbf{f}:I\times\mathbb{R}^{3}\mapsto\mathbb{R}^{3}$ we denote a time-periodic external force, which we assume to be given and to belong in $L^{2}\left(I;\mathbb{R}^{3}\right)$ for simplicity. 

We set $\rho_{f}=2\mu_{f}=1$.

We will neglect the pressure, by dealing with test-functions with divergence zero. 
Once found a velocity field $\mathbf{u}$, the pressure can then be reconstructed by standard methods.

\paragraph{The elasticity equations.}

The elastic plate is assumed to be a homogeneous medium of given density $\rho_{s}>0$ and of thickness $h>0$. 
We denote its displacement, measured with respect to the (fixed) lateral boundary $\Gamma$, by $\eta:I\times\omega \mapsto \mathbb{R}$ and we assume that it fulfills a Lam\'{e} type equation of the form 
\begin{equation}\label{eqn:lame}
\rho_{s}h\partial_{tt}\eta+K^{\prime}\left(\eta\right)=G+g
\end{equation}
with $g\in L^{2}\left(I;L^{2}\left(\omega\right)\right)$ accounting for possible gravitational forces, assumed to be time-periodic as well. 

The force $G$ is described in \eqref{eqn:G-force}.
We set $\rho_{s}=h=1$. The equation~\eqref{eqn:lame} can be seen as the Euler-Lagrange equation associated to the following energy:
\begin{equation}\label{eqn:energy-linear-shell}
E_{\eta}:=\frac{1}{2}\int_{\omega}\left|\partial_{t}\eta\right|^{2}\ dA+\frac{1}{2}K\left(\eta\right)-\int_{\omega}g\eta\ dA.
\end{equation}
The term $K(\eta)$ denotes the \emph{nonlinear elastic Koiter energy} which is detailed below, represents a popular and frequently used model in the literature on elasticity theory, see e.g. \cite{ciarlet2018nonlinear}, \cite[Part B]{ciarlet2000theory} and the references therein.

Let us define:
\begin{itemize}
    \item The \emph{change of metric tensor} $\mathbb{G}\left(\eta\right)\in \mathbb{R}^{2\times2}$ via 

\begin{equation}
\mathbb{G}_{ij}\left(\eta\right):=\partial_{i}\phi\left(\eta\right)\partial_{j}\phi\left(\eta\right)-\partial_{i}\phi\partial_{j}\phi,\quad i,j=1,2 
\end{equation}

\item The \emph{change of curvature tensor} $\mathbb{R}^{\sharp}\left(\eta\right)\in\mathbb{R}^{2\times2}$ via

\begin{equation}
\mathbb{R}_{ij}^{\sharp}:=\frac{\partial_{ij}\phi_{\eta}\cdot\mathbf{n}^{\eta}}{\left|\partial_{i}\phi\times\partial_{j}\phi\right|}-\partial_{ij}\phi\cdot\mathbf{n},\quad i,j=1,2
\end{equation}

\item The \emph{nonlinear Koiter elastic energy} as
\begin{equation}\label{eqn:nonl-koiter}
K\left(\eta\right):=K\left(\eta, \eta \right):= \frac{h}{6}\int_{\omega}\mathcal{A}\mathbb{G}\left(\eta\right):\mathbb{G}\left(\eta\right)dA+\frac{h^{3}}{48}\int_{\omega}\mathcal{A}\mathbb{R}^{\sharp}\left(\eta\right):\mathbb{R}^{\sharp}\left(\eta\right)dA
\end{equation}
\end{itemize}
By $\mathcal{A}$ we denote a fourth-order tensor  given by

\begin{equation}
\mathcal{A}\mathbf{E}:=\frac{4\lambda\mu}{\lambda+\mu}\left(\mathbf{A}:\mathbf{E}\right)\mathbf{A}+4\mu\mathbf{A}\mathbf{E}\mathbf{A},\quad\mathbf{E}\in\text{Sym}\left(\mathbb{R}^{2\times2}\right).
\end{equation}

The coefficients $\lambda, \mu$ represent the Lam\'{e} coefficients of the shell, while $\mathbf{A}$ denoting the contravariant metric tensor associated to $\Gamma$.
See \cite[p.162]{ciarlet2000theory} for further details  and \cite{ciarlet-roquefort-2001} for further details about $\mathcal{A}$ and the model, where in particular it is shown that that $\mathcal{A}$ is coercive. 

In our case (with $\Gamma$ being flat) we can simplify the computations to obtain that

 \begin{equation}
        \mathbb{G}\left(\eta\right):=\left(\partial_{i}\eta\partial_{j}\eta\right)_{i,j=1,2}\quad\mathbb{R}^{\sharp}\left(\eta\right)=\left(\partial_{ij}\eta\right)_{i,j=1,2}
    \end{equation}
    
 We see that without loss of generality we may assume
 \begin{equation}
     K\left(\eta\right)=\int_{\omega}\left|\nabla\eta\right|^{4}+\left|\nabla^{2}\eta\right|^{2}dA
 \end{equation}
 with

\begin{equation}
\begin{aligned}K\left(\eta\right)\gtrsim & \left\Vert \mathbb{G}\left(\eta\right)\right\Vert _{L_{x}^{2}}^{2}+\left\Vert \mathbb{R}^{\sharp}\left(\eta\right)\right\Vert _{L_{x}^{2}}^{2}\\
\gtrsim & \left\Vert \nabla^{2}\eta\right\Vert _{L_{x}^{2}}^{2}+\left\Vert \nabla\eta\right\Vert _{L_{x}^{4}}^{4}
\end{aligned}
\end{equation}
and  thus $K(\eta)$ is $H^{2}$ coercive.
The $H^{2}$ estimates will be our main focus since in $\mathbb{R}^{2}$  the first-order derivatives term can be controlled by the second order ones due to the embeddings \[H^{2}\left(\omega\right)\hookrightarrow\hookrightarrow W^{1,4}\left(\omega\right)\hookrightarrow\hookrightarrow L^{\infty}\left(\omega\right).
\]

Let $K^{\prime}$ denote the Fr\'{e}chet derivative, which appears in \eqref{eqn:lame}.
We obtain for an arbitrary $\xi \in H^{2}\left(\omega\right)$ that
\[
\left\langle K^{\prime}\left(\eta\right),\xi\right\rangle :=\frac{h}{6}\int_{\omega}\mathcal{A}\mathbb{G}\left(\eta\right):\left\langle \mathbb{G}^{\prime}\left(\eta\right),\xi\right\rangle dA+\frac{h^{3}}{48}\int_{\omega}\mathcal{A}\mathbb{R}^{\sharp}\left(\eta\right):\left\langle \left(\mathbb{R}^{\sharp}\left(\eta\right)\right)^{\prime},\xi\right\rangle dA
\]
with 
\[
\left\langle \left(\mathbb{G}\left(\eta\right)\right)^{\prime},\xi\right\rangle =\left(\partial_{i}\eta\partial_{j}\xi+\partial_{i}\xi\partial_{j}\eta\right)_{1\le i,j\le2},\quad\left\langle \left(\mathbb{R}^{\sharp}\left(\eta\right)\right)^{\prime},\xi\right\rangle =\left(\partial_{ij}\xi\right)_{1\le i,j\le2}.
\]

\paragraph{Boundary conditions.}

Regarding the fluid, on the (steady) lateral parts of the boundary we impose space-periodic boundary conditions.

This means $\mathbf{u}\left(t,x+1,y,z\right)=\mathbf{u}\left(t,x+1,y,z\right)$ and $\mathbf{u}\left(t,x,y+1,z\right)=\mathbf{u}\left(t,x,y+1,z\right)$ for $(x,y)\in \omega$.

On the bottom of the cavity, at $\Gamma_{b}:=\omega\times\left\{ z=0\right\}$ we assume that $\mathbf{u}=\mathbf{0}$.

Between fluid and elastic structure we assume the \emph{no-slip} boundary condition, meaning that 
\begin{equation}
\mathbf{u}\left(t,x,y,\eta\left(t,x,y\right)\right)=\partial_{t}\eta\left(t,x,y\right)\mathbf{e}_{3},\quad t\in I,\ \left(x,y\right)\in\omega 
\end{equation}

Regarding the forcing $G$, we assume the so-called \emph{dynamic coupling} boundary condition meaning that $G$ is given by the tension exerted by the fluid through $\sigma$ and evaluated in the direction $-\mathbf{e}_{3}$. This reads as 
\begin{equation}\label{eqn:G-force}
G=J_{\eta\left(t\right)}\left(x\right)\left.\left(\sigma\left(\mathbf{u},p\right)\boldsymbol{\nu}^{\eta}\right)\left(t,y\right)\right|_{y=\phi_{\eta\left(t\right)}\left(x\right)}\cdot\left(-\mathbf{e}_{3}\right),\quad x\in\omega 
\end{equation}
where we denote $\boldsymbol{\nu}^{\eta}:=\frac{\mathbf{n}^{\eta}}{\left|\mathbf{n}^{\eta}\right|}=\frac{\partial_{1}\phi_{\eta}\times\partial_{2}\phi_{\eta}}{\left|\partial_{1}\phi_{\eta}\times\partial_{2}\phi_{\eta}\right|}$ the unit outer normal vector field of $\Gamma_{\eta}$, 
$J_{\eta\left(t\right)}:=\left|\partial_{\theta}\phi_{\eta\left(t\right)}\times\partial_{z}\phi_{\eta\left(t\right)}\right|$ the Jacobian of the transformation $\phi_{\eta}$.
By simple computations we obtain  
\begin{equation}\label{eqn:jacobian}
J_{\eta}=\sqrt{\left|\nabla\eta\right|^{2}+1}.
\end{equation}
 
% The problem of evolution after a possible contact (although clearly interesting and widely open) is beyond the aims of this paper, therefore we will focus on the following condition 
%  \begin{equation}
%      \left\Vert \eta\right\Vert _{L^{\infty}\left(I\times\omega\right)}\le \kappa<R
%  \end{equation}
% for a certain $M>0$, which will be a consequence of the energy estimates and
% which ensures that  $J_{\eta}$ and $J_{\eta}^{-1}$ remain bounded by a constant depending on $M$ and $\left|\nabla\eta\right|$.

Between the area elements of $\Gamma_{\eta}$ and $\Gamma$ we have that
$dA_{\eta}=J_{\eta}dA$.

% \begin{equation}
%  \left.\begin{array}{c}
% \rho_{s}h\partial_{tt}\eta+\Delta^{2}\eta=-J_{\eta\left(t\right)}\sigma\left(\phi_{\eta\left(t\right)}\left(x\right)\right)\boldsymbol{\nu}^{\eta}\left(t,x\right)\cdot\mathbf{e}_{r}\\
% \partial_{t}\eta\mathbf{e}_{r}\cdot\boldsymbol{\nu}^{\eta}\left(t,z\right)=\mathbf{u}\left(t,\phi_{\eta\left(t\right)}\left(x\right)\right)\cdot\boldsymbol{\nu}^{\eta}\left(t,x\right)\\
% \left(\partial_{t}\eta\mathbf{e}_{r}-\mathbf{u}\left(t,\phi_{\eta\left(t\right)}\left(x\right)\right)\right)\cdot\boldsymbol{\tau}_{i}^{\eta}=\alpha\sigma\left(\phi_{\eta\left(t\right)}\left(x\right)\right)\boldsymbol{\nu}^{\eta}\left(t,x\right)\cdot\boldsymbol{\tau}_{i}^{\eta}\ \left(i=1,2\right)
% \end{array}\right\} \text{in}\ \left(t,x\right)\in I\times\omega\end{equation}

% Concerning the elastic shell, we assume that it is \emph{clamped} at the endpoints, meaning that
% \begin{equation}
% \left|\eta\right|=\left|\nabla\eta\right|=0\quad\text{at}\ z\in\left\{ 0,L\right\} 
% \end{equation}

The displacement $\eta$ enjoys space periodic boundary conditions due to the use of $\omega=\mathbb{T}^{2}$.

\begin{remark}\label{rmk:int-eta-constant-time}
    Note that from the incompressibility condition we have
    \[
    \int_{\Omega_{\eta}\left(t\right)}\text{div} \ \mathbf{u}dx=0
    \]

   and  it follows that 
\[
\int_{\omega}\partial_{t}\eta dA=0\Longleftrightarrow\int_{\omega}\eta dA=:m,\quad\forall t\in I
\]
for some $m \in \mathbb{R}$.
\end{remark}

\begin{remark}\label{rmk:data}
For simplicity of the exposition we will refer to the $\texttt{data}$ of the problem as the set of prescribed constants and parameters:
\begin{equation}\label{eqn:data}
\texttt{data}:=\left\{ T,\,\Omega,\,\omega,\,\rho_{f},\,\mu_{f},\,\rho_{s},\,h,\,\lambda,\,\mu,\,\kappa\right\}.
\end{equation}
\end{remark}

\subsection{The fluid-structure interaction (FSI) problem} 
The equations presented above can be summarized in the following problem:
find a pair $\left(\mathbf{u},\eta\right)$ as described above which  solves the following system: 
\begin{equation}\label{eqn:FSI-system}
\begin{cases}
\partial_{t}\mathbf{u}+\left(\mathbf{u}\cdot\nabla\right)\mathbf{u}=\text{div}\ \sigma+\mathbf{f} & \text{in}\ I\times\Omega_{\eta}\\
\text{div}\mathbf{u}=0 & \text{in}\ I\times\Omega_{\eta}\\
\mathbf{u}=\mathbf{0} & \text{on}\ I\times\left\{ z=0\right\} \\
\partial_{tt}\eta+K^{\prime}\left(\eta\right)=-J_{\eta}\left(\sigma\left(\mathbf{u},p\right)\boldsymbol{\nu}^{\eta}\left(t,\phi_{\eta}\right)\right)\cdot\mathbf{e}_{3}+g & \text{on}\ I\times\omega\\
\eta\left(0,\cdot\right)=\eta\left(T,\cdot\right),\ \partial_{t}\eta\left(0,\cdot\right)=\partial_{t}\eta\left(T,\cdot\right) & \text{in}\ \omega\\
\mathbf{u}\left(0,\cdot\right)=\mathbf{u}\left(T,\cdot\right) & \text{in \ensuremath{\Omega_{\eta\left(0\right)}=}\ensuremath{\Omega_{\eta\left(T\right)}}}
\end{cases}\tag{FSI}
\end{equation}

\section{Formal a-priori estimates}\label{ssec:a-priori-section-paper}
Assume within this subsection that all the functions involved are smooth. 
We multiply the fluid equations \eqref{eqn:fluid} by $\mathbf{u}$ and the elastic equation \eqref{eqn:lame} by $\partial_{t}\eta$. We use the boundary conditions and add the results to obtain the following energy balance
\begin{equation}\label{eqn:energy-balance}
    \frac{d}{dt}E\left(t\right)+\int_{\Omega_{\eta}\left(t\right)}\left|\nabla\mathbf{u}\right|^{2}dx=\int_{\Omega_{\eta}\left(t\right)}\mathbf{f}\cdot\mathbf{u}dx+\int_{\omega}g\partial_{t}\eta dA,\quad\forall t\in I
\end{equation}
with $E$ denoting the energy of the system, namely
\begin{equation}\label{eqn:energy-def}
 E\left(t\right):=\frac{1}{2}\int_{\Omega_{\eta}\left(t\right)}\left|\mathbf{u}\left(t\right)\right|^{2}dx+\frac{1}{2}\int_{\omega}\left|\partial_{t}\eta\left(t\right)\right|^{2}dA+K\left(\eta\left(t\right)\right)
\end{equation}
Assuming that the unknowns are time we can integrate \eqref{eqn:energy-balance} on $[0,T]$ to obtain that 
\begin{equation}\label{eqn:diff-eq}
    \int_{I}\int_{\Omega_{\eta}\left(t\right)}\left|\nabla\mathbf{u}\right|^{2}dxdt=\int_{I}\int_{\Omega_{\eta}\left(t\right)}\mathbf{f}\cdot\mathbf{u}dxdt+\int_{I}\int_{\omega}g\partial_{t}\eta dAdt.
\end{equation}
Since $\mathbf{u}=\mathbf{0}$ when $z=0$ we have Poincar\'{e}'s inequality
\begin{equation}\label{eqn:poincare}
\int_{\Omega_{\eta}\left(t\right)}\left|\mathbf{u}\right|^{2}dx\lesssim\int_{\Omega_{\eta}\left(t\right)}\left|\nabla\mathbf{u}\right|^{2}dx
\end{equation}
where the constant depends on the diameter on $\Omega_{\eta}$. Using \eqref{eqn:condition-eta-norm}
we get  
\begin{equation}\label{eqn:y1}
\int_{\Omega_{\eta}\left(t\right)}\left|\mathbf{u}\right|^{2}dx\lesssim_{\Omega,\kappa}\int_{\Omega_{\eta}\left(t\right)}\left|\nabla\mathbf{u}\right|^{2}dx.
\end{equation}
Next, we write 
\begin{equation}\label{eqn:y2}
\begin{aligned}\int_{\omega}\left|\partial_{t}\eta\right|^{2}dA & =\int_{\omega}\left|\mathbf{u}\left(t,x,y,\eta\right)-\mathbf{u}\left(t,x,y,0\right)\right|^{2}dA\\
 & =\int_{\omega}\left|\int_{0}^{\eta\left(t,x,y\right)}\partial_{z}\mathbf{u}dz\right|^{2}dA\\
 & \le\int_{\omega}\eta\int_{0}^{\eta\left(t,x,y\right)}\left|\partial_{z}\mathbf{u}\right|^{2}dz\ dA\\
 & \le\left\Vert \eta\right\Vert _{L_{t,x}^{\infty}}\int_{\Omega_{\eta}\left(t\right)}\left|\nabla\mathbf{u}\right|^{2}dx\\
 & \le\int_{\Omega_{\eta}\left(t\right)}\left|\nabla\mathbf{u}\right|^{2}dx.
\end{aligned}
\end{equation}

We may now use Young's inequality in \eqref{eqn:diff-eq} and \eqref{eqn:y1}-\eqref{eqn:y2} to get that, for every $\varepsilon>0$ the following holds:
\begin{equation}
    \begin{aligned}\int_{I}\int_{\Omega_{\eta}\left(t\right)}\left|\nabla\mathbf{u}\right|^{2}dxdt\le & \varepsilon\left(\int_{I}\int_{\Omega_{\eta}\left(t\right)}\left|\mathbf{u}\right|^{2}dxdt+\int_{I}\int_{\omega}\left|\partial_{t}\eta\right|^{2}dAdt\right)+\\
 & \frac{1}{4\varepsilon}\underbrace{\left(\int_{I}\int_{\Omega_{\eta}\left(t\right)}\left|\mathbf{f}\right|^{2}dxdt+\int_{I}\int_{\omega}\left|g\right|^{2}dAdt\right)}_{C\left(\mathbf{f},g\right)}\\
 & \lesssim\varepsilon\int_{I}\int_{\Omega_{\eta}\left(t\right)}\left|\nabla\mathbf{u}\right|^{2}dxdt+\frac{1}{4\varepsilon}C\left(\mathbf{f},g\right)
\end{aligned}
\end{equation}
denoting
\begin{equation}\label{eqn:C(f,g)}
C\left(\mathbf{f},g\right):=\int_{I}\int_{\Omega_{\eta}\left(t\right)}\left|\mathbf{f}\right|^{2}dxdt+\int_{I}\int_{\omega}\left|g\right|^{2}dAdt
\end{equation}
and by choosing $\varepsilon$ sufficiently small we see that 
\begin{equation}\label{eqn:diffusion-estimate}
\begin{aligned}\int_{I}\int_{\Omega_{\eta}\left(t\right)}\left|\mathbf{u}\right|^{2}dxdt+\int_{I}\int_{\omega}\left|\partial_{t}\eta\right|^{2}dAdt\lesssim & \int_{I}\int_{\Omega_{\eta}\left(t\right)}\left|\nabla\mathbf{u}\right|^{2}dxdt\\
\lesssim & C\left(\mathbf{f},g\right).
\end{aligned}
\end{equation}

Now, our aim is to obtain an uniform in time estimate of the energy. For this, by the mean value Theorem there is $t_{0}\in I$ such that 
\[
E\left(t_{0}\right)=\frac{1}{\left|I\right|}\int_{I}E\left(t\right)dt=\frac{1}{T}\int_{I}E\left(t\right)dt.
\]
Integrating \eqref{eqn:energy-balance} between $t_0$ and an arbitrary $t\in I$ and employing \eqref{eqn:diffusion-estimate} and Young's inequality we obtain
\begin{equation}\label{eqn:unif-est-1}
    \begin{aligned}\sup_{t\in I}E\left(t\right)\le & E\left(t_{0}\right)+\int_{I}\int_{\Omega_{\eta}\left(t\right)}\left|\nabla\mathbf{u}\right|^{2}dxdt+\\
 & \frac{1}{2}\left(\int_{I}\int_{\Omega_{\eta}\left(t\right)}\left|\mathbf{u}\right|^{2}dxdt+\int_{I}\int_{\omega}\left|\partial_{t}\eta\right|^{2}dAdt\right)+\frac{1}{2}C\left(\mathbf{f},g\right)\\
\lesssim & \int_{I}K\left(\eta\right)dt+C\left(\mathbf{f},g\right).
\end{aligned}
\end{equation}

Now, in order to estimate the term $\int_{I}K\left(\eta\right)dt$ we would like to test the elastic equation \eqref{eqn:lame} by $\eta$. For this, we need to find a corresponding test-function for the fluid equations \eqref{eqn:fluid}. 

To this end, we employ the extension operator  $\mathcal{F}_{\eta}$ for the fluid equation such that $\text{div} \mathcal{F}(\eta)=0$ and $\text{tr}_{\eta}\mathcal{F}(\eta)=\eta \mathbf{e}_{3} $.  
Finding such an operator $\mathcal{F}(\eta)$ is not straight-forward. In \cite[Proposition 3.3]{MS22}, this construction was performed, along with corresponding  estimates. 
The extension is $\mathcal{F}=\mathcal{F}_{\eta}$, which is defined along with appropriate estimates in Proposition~\ref{chap-appendix-prop-extension-plate}.
Now let us fix a smooth compactly supported function
\begin{equation}\label{eqn:psi-bump-function-test-eta}
\psi\in C^{\infty}\left(\omega;\mathbb{R}\right)\quad \int_{\omega}\psi dA=1
\end{equation}
 and by recalling the definition of $m$ from Remark~\ref{rmk:int-eta-constant-time} we find  that the following couple 
\begin{equation}\label{eqn:test-funct-eta}
\left(\mathcal{F}_{\eta}\left(\mathcal{M}_{\eta}\left(\eta\right)\right),\mathcal{M}_{\eta}\left(\eta\right)\right)=\left(\mathcal{F_{\eta}}\left(\eta-m\psi\right),\eta-m\psi\right)
\end{equation}
is a valid test function. 
As we show below, this test-function  provides following estimate: 
\begin{equation}\label{eqn:formal-claim}
2\int_{I}K\left(\eta\right)dt\lesssim\theta\sup_{t\in I}E\left(t\right)+\frac{1}{\theta}\left(C\left(\mathbf{f},g\right)^{2}+C\left(\mathbf{f},g\right)+m^{2}\right)\quad\forall\theta>0.
\end{equation}

Let us recall  that we are in the case when the Koiter energy takes, up to some constants, the form \[K\left(\eta\right)=\int_{\omega}\left|\nabla\eta\right|^{4}+\left|\nabla^{2}\eta\right|^{2}dA\] and routine computations show that
 \[
\left\langle K^{\prime}\left(\eta\right),\xi\right\rangle =4\int_{\omega}\left|\nabla\eta\right|^{2}\nabla\eta\cdot\nabla\xi+2\nabla^{2}\eta:\nabla^{2}\xi,\quad\xi\in H^{2}\left(\omega\right)
 \] In particular we have that 
 \[\left\langle K^{\prime}\left(\eta\right),\eta\right\rangle \ge2K\left(\eta\right).\]

Using the test-function \eqref{eqn:test-funct-eta} we find that
\begin{equation}
\begin{aligned}2\int_{I}K\left(\eta\right)dt\leq & \int_{I}\left\langle K^{\prime}\left(\eta\right),\eta\right\rangle dt\\
\leq & \int_{I}\left\langle K^{\prime}\left(\eta\right),\eta-\mathcal{M}_{\eta}\left(\eta\right)\right\rangle dt+\int_{I}\int_{\omega}\partial_{t}\eta\partial_{t}\left(\mathcal{M}_{\eta}\left(\eta\right)\right)dAdt+\\
 & \int_{I}\int_{\Omega_{\eta\left(t\right)}}\mathbf{u}\cdot\partial_{t}\mathcal{F}_{\eta}\left(\mathcal{M}_{\eta}\left(\eta\right)\right)dxdt+\\
 & \int_{I}\int_{\Omega_{\eta\left(t\right)}}\left(\nabla\mathbf{u}+\mathbf{u}\otimes\mathbf{u}\right):\nabla\mathcal{F}_{\eta}\left(\mathcal{M}\left(\eta\right)\right)dxdt+\\
 & \int_{I}\int_{\Omega_{\eta\left(t\right)}}\mathbf{f}\cdot\mathcal{F}_{\eta}\left(\mathcal{M}\left(\eta\right)\right)dxdt+\int_{I}\int_{\omega}g\left(\mathcal{M}_{\eta}\left(\eta\right)\right)dAdt\\
  =:&\sum_{k=1}^{7}I_{k}.
\end{aligned}
\end{equation}

Recalling now the properties of the divergence-free extension operator $\mathcal{F}_{\eta}$ from Proposition~\ref{chap-appendix-prop-extension-plate}
we employ  H\"{o}lder's inequalilty,  the Sobolev embedding, the $H^2$-coercivity of $K(\eta)$ and the estimate~\eqref{eqn:C(f,g)} to get 
\begin{equation}\label{eqn:chap02-estimate-K-eta-discrete}
\begin{aligned}\left|I_{1}\right| & \le m\left\Vert K^{\prime}\left(\eta\right)\right\Vert _{L_{t}^{2}L_{x}^{2}}\le m\left(\sup_{t\in I}E\left(t\right)\right)^{1/2}\\
\left|I_{2}\right| & \le\left\Vert \partial_{t}\eta\right\Vert _{L_{t}^{2}L_{x}^{2}}^{2}\lesssim C\left(\mathbf{f},g\right)\\
\left|I_{3}\right|+\left|I_{4}\right|+\left|I_{5}\right| & \lesssim\left(\left\Vert \mathbf{u}\right\Vert _{L_{t}^{2}W_{x}^{1,2}}+\left\Vert \mathbf{u}\right\Vert _{L_{t}^{2}W_{x}^{1,2}}^{2}\right)\left\Vert \mathcal{F}_{\eta}\left(\mathcal{M}_{\eta}\right)\right\Vert _{W_{t}^{1,2}L_{x}^{2}\cap L_{t}^{\infty}W_{x}^{1,2}}\\
 & \lesssim\left(C\left(\mathbf{f},g\right)^{1/2}+C\left(\mathbf{f},g\right)\right)\left\Vert \mathcal{M}_{\eta}\right\Vert _{W_{t}^{1,\infty}L_{x}^{2}\cap L_{t}^{\infty}W_{x}^{2,2}}\\
 & \lesssim\left(C\left(\mathbf{f},g\right)^{1/2}+C\left(\mathbf{f},g\right)\right)\left(\left(\sup_{t\in I}E\left(t\right)\right)^{1/2}+m\right)\\
\left|I_{6}\right|+\left|I_{7}\right| & \le\left(\left\Vert \mathbf{f}\right\Vert _{L_{t}^{2}L_{x}^{2}}+\left\Vert g\right\Vert _{L_{t}^{2}L_{x}^{2}}\right)\left(\left\Vert \mathcal{F}_{\eta}\left(\mathcal{M}_{\eta}\right)\right\Vert _{L_{t}^{2}L_{x}^{2}}+\left\Vert \mathcal{M}_{\eta}\right\Vert _{L_{t}^{2}L_{x}^{2}}\right)\\
 & \lesssim C\left(\mathbf{f},g\right)^{1/2}\left(\sup_{t\in I}E\left(t\right)\right)^{1/2}+C\left(\mathbf{f},g\right)
\end{aligned}
\end{equation}

All in all,  using Young's inequality
$
\left|ab\right|\le\theta a^{2}+\frac{1}{4\theta}b^{2}\quad\forall a,b\in\mathbb{R},\theta>0$
 we conclude that 
\[
\sum_{k=1}^{7}\left|I_{k}\right|\lesssim\theta\sup_{t\in I}E\left(t\right)+\frac{1}{\theta}\left(C\left(\mathbf{f},g\right)^{2}+C\left(\mathbf{f},g\right)+m^{2}\right).
\]
From \eqref{eqn:formal-claim} and \eqref{eqn:unif-est-1}, choosing $\theta$ sufficiently small 
we obtain that
 \begin{equation}\label{eqn:formal-final}
\sup_{t\in I}E\left(t\right)\lesssim C\left(\mathbf{f},g\right)^{2}+C\left(\mathbf{f},g\right)+m^{2}.
\end{equation}
This is the uniform estimate of energy which is needed in the context of time- periodic solutions.

\section{Weak formulation and main result}\label{ssec:defweaksol}

% Motivated by the a-priori estimates we can introduce the following  notion of \emph{periodic solution}. 
We derive formally a weak formulation of the problem~\eqref{eqn:FSI-system}.

For a generic Banach space $X$ let us consider the space 
\[
C_{\text{per}}^{\infty}\left(\mathbb{R};X\right):=\left\{ \varphi\in C^{\infty}\left(\mathbb{R};X\right),\ \varphi\left(t\right)=\varphi\left(t+T\right)\ \forall t\in\mathbb{R}\right\} 
\]
and let 
\[
\left(\mathbf{q},\xi\right)\in C_{\text{per}}^{\infty}\left(I;C^{\infty}\left(\Omega_{\eta}\right)\right)\times C_{\text{per}}^{\infty}\left(I;C^{\infty}\left(\omega\right)\right)
\]
be such that 
\begin{equation}
\text{tr}_{\eta}\left(\mathbf{q}\right):=\left.\mathbf{q}\right|_{z=\eta\left(t,x,y\right)}=\xi\mathbf{e_{3}},\ \text{div}\ \mathbf{q}=0,\quad\left.\mathbf{q}\right|_{z=0}=\mathbf{0}
\end{equation}

The pair  $(\mathbf{q}, \xi )$  as above will be a test function. We multiply \eqref{eqn:fluid} by $\mathbf{q}$ and \eqref{eqn:lame} by $\xi$. 

First we notice that
\begin{align*}\int_{I}\int_{\Omega_{\eta\left(t\right)}}\partial_{t}\mathbf{u}\cdot\mathbf{q}dxdt= & \int_{I}\left(\frac{d}{dt}\int_{\Omega_{\eta\left(t\right)}}\mathbf{u}\cdot\mathbf{q}dx-\int_{\Omega_{\eta\left(t\right)}}\mathbf{u}\cdot\partial_{t}\mathbf{q}dx\right)dt\\
 & -\int_{I}\int_{\omega}\left(\partial_{t}\eta\right)^{2}\xi dAdt\\
= & -\int_{I}\int_{\Omega_{\eta\left(t\right)}}\mathbf{u}\cdot\partial_{t}\mathbf{q}dxdt-\int_{I}\int_{\omega}\left(\partial_{t}\eta\right)^{2}\xi dAdt
\end{align*}
 where we have  used Reynolds' transport Theorem~\ref{thm:Reynolds-chap-appendix}  and the fact that $\text{tr}_{\eta} \mathbf{u}=\partial_t\eta \mathbf{e}_{3}$ and also $\text{tr}_{\eta} \mathbf{q}= \xi \mathbf{e}_3$. On the other hand if we look at the convective term we see that (using Einstein's summation convention)
 \begin{equation}
\begin{aligned}\int_{I}\int_{\Omega_{\eta\left(t\right)}}\left(\mathbf{u}\cdot\nabla\right)\mathbf{u}\cdot\mathbf{q}dxdt= & \int_{I}\int_{\Omega_{\eta\left(t\right)}}\partial_{i}\left(\mathbf{u}^{i}\mathbf{u}^{j}\right)\mathbf{q}^{j}-\partial_{i}\mathbf{u}^{i}\mathbf{u}^{j}\mathbf{q}^{j}dxdt\\
= & \int_{I}\int_{\partial\Omega_{\eta\left(t\right)}}\mathbf{u}^{i}\mathbf{u}^{j}\mathbf{q}^{j}\nu^{i}dAdt\\
 & -\int_{I}\int_{\Omega_{\eta\left(t\right)}}\left(\mathbf{u}^{i}\mathbf{u}^{j}\right)\partial_{i}\mathbf{q}^{j}dxdt\\
= & \int_{I}\int_{\omega}\left(\partial_{t}\eta\right)^{2}\xi dAdt-\int_{I}\int_{\Omega_{\eta\left(t\right)}}\left(\mathbf{u}\cdot\nabla\right)\mathbf{q}\cdot\mathbf{u}dxdt
\end{aligned}
 \end{equation}
where we have used that $\text{tr}_{\eta}\mathbf{u}=\partial_{t}\eta\mathbf{e}_{3}$,  $\text{div} \ \mathbf{u}=0$ and Stokes' Theorem. 

Recalling  Korn's identity  from Lemma~\ref{lm:Korn} we get
$\int_{\Omega_{\eta\left(t\right)}}\nabla\mathbf{u}:\nabla\mathbf{q}dx=2\int_{\Omega_{\eta\left(t\right)}}\mathbb{D}\left(\mathbf{u}\right):\mathbb{D}\left(\mathbf{q}\right)dx$.
Putting all together,  we obtain the following definition. 

Now we need to introduce the suitable function spaces.
\paragraph{Function spaces.}

Recalling that $I = [0,T]$, we define
\begin{equation}
I \times \Omega_{\eta} := \bigcup_{t\in I} \left\{ t\right\} \times\Omega_{\eta\left(t\right)}
\end{equation} and we can now introduce the Lebesgue and Sobolev spaces adapted to the moving domains. They are defined as follows: for $1 \le p, \ q \le \infty$ we have
\begin{equation}
\begin{aligned}L^{p}\left(I;L^{q}\left(\Omega_{\eta}\right)\right):= & \left\{ \mathbf{v}\in L^{1}\left(I\times\Omega_{\eta}\right):\mathbf{v}\left(t,\cdot\right)\in L^{q}\left(\Omega_{\eta\left(t\right)}\right)\ \text{for a .e.}\ t\in I,\right.\\
 & \left\Vert \mathbf{v}\left(t,\cdot\right)\right\Vert _{L^{q}\left(\Omega_{\eta\left(t\right)}\right)}\in L^{p}\left(I\right)\left.\right\} \\
L^{p}\left(I;W^{1,q}\left(\Omega_{\eta}\right)\right):= & \left\{ \mathbf{v}\in L^{1}\left(I\times\Omega_{\eta}\right):\mathbf{v}\left(t,\cdot\right)\in W^{1,q}\left(\Omega_{\eta\left(t\right)}\right)\ \text{for a .e.}\ t\in I,\right.\\
 & \left\Vert \nabla\mathbf{v}\left(t,\cdot\right)\right\Vert _{L^{q}\left(\Omega_{\eta\left(t\right)}\right)}\in L^{p}\left(I\right)\left.\right\} 
\end{aligned}
\end{equation}

These spaces admit  the  usual \emph{trace} and \emph{extension} operators as long as  $\eta\in C_{x}^{0,1}$ and $\left\Vert \eta\right\Vert _{L_{t,x}^{\infty}}<1$; see  \cite[Lemma 1]{Gr05} for further details.
We will write for simplicity $W_{t}^{k,p}W_{x}^{l,q}$ when dealing with Lebesgue/Sobolev spaces of Bochner type.

%\subsection{Spaces of time-periodic functions}
In the spirit of the  formal estimates that will be derived, it  is natural to consider the following function spaces. 
 For  $T>0$ and recalling that
 \[
C_{\text{per}}^{\infty}\left(I;X\right):=\left\{ \varphi\in C^{\infty}\left(\mathbb{R};X\right),\ \varphi\left(t+T,\cdot\right)=\varphi\left(t,\cdot\right)\right\} 
 \] 
 
 for $k \in \mathbb{Z}$ and $1\le p\le \infty$  we may define the  spaces
\begin{align*}
L_{\text{per}}^{p}\left(I;X\right):= & \overline{\left\{ f\in C_{\text{per}}^{\infty}\left(I;X\right)\right\} }^{\left\Vert \cdot\right\Vert _{p}}\\
W_{\text{per}}^{k,p}\left(I;X\right):= & \overline{\left\{ f\in C_{\text{per}}^{\infty}\left(I;X\right)\right\} }^{\left\Vert \cdot\right\Vert _{k,p}}.
\end{align*}
 
Now we introduce the function spaces adapted to the solutions and the test functions. 
  Such spaces have been investigated carefully in the previous literature (see for instance~\cite{LR14}) and hence the concept of time-periodic spaces can be extended in a straight forward manner. 
Let us set
\begin{equation}\label{eqn:def-function-spaces}
\begin{aligned}H_{0,\text{div}}^{1}\left(\Omega_{\eta}\right):= & \left\{ \mathbf{u}\in H^{1}\left(\Omega_{\eta}\right):\text{div}\ \mathbf{u}=0\text{ in }\Omega_{\eta},\ \left.\mathbf{u}\right|_{z=0}=\mathbf{0}\right\} \\
V_{Fl,\text{per}}:= & L_{\text{per}}^{\infty}\left(I;L^{2}\left(\Omega_{\eta}\right)\right)\cap L_{\text{per}}^{2}\left(I;H_{0,\text{div}}^{1}\left(\Omega_{\eta}\right)\right)\\
V_{St,\text{per}}:= & L_{\text{per}}^{\infty}\left(I;H^{2}\left(\omega\right)\right)\cap W_{\text{per}}^{1,\infty}\left(I;L^{2}\left(\omega\right)\right)\\
V_{S,\text{per}}:= & \left\{ \left(\mathbf{u},\eta\right)\in V_{F,\text{per}}\times V_{K,\text{per}};\mathbf{u}\left(t,\phi_{\eta}\left(t,x\right)\right)=\partial_{t}\eta\left(t,x\right)\text{\ensuremath{\mathbf{e}_{3}} in }\omega\right\} \\
V_{T,Fl} & :=H_{\text{per}}^{1}\left(I;L^{2}(\Omega_{\eta}\right)\cap L_{\text{per}}^{\infty}\left(I;H_{0,\text{div}}^{1}\left(\Omega_{\eta}\right)\right)\\
V_{T,St} & :=\left\{ \xi:\xi\in C_{\text{per}}^{\infty}\left(I;C^{\infty}\left(\omega;\mathbb{R}\right)\right)\right\} \\
V_{T,\text{per}}:= & \left\{ \left(\mathbf{q},\xi\right)\in V_{T,Fl}\times V_{T,St}:\mathbf{q}\left(t,\phi_{\eta}\left(t,x\right)\right)=\xi\left(t,x\right)\mathbf{e}_{3}\ \text{in}\ \omega\right\} 
\end{aligned}
\end{equation}
Here $V_{S,\text{per}}$ will be the space of time-periodic solutions and $V_{T,\text{per}}$  the space of time-periodic test-functions.

\begin{remark}\label{rmk:bound-eta-norm}
    In order to estimate $\left\Vert \eta\right\Vert _{H^{2}\left(\omega\right)}$ let us observe that from \eqref{eqn:formal-final} one gets the bound 
    \begin{equation}
        \sup_{t\in I}\left(\left\Vert \nabla^{2}\eta\right\Vert _{L^{2}\left(\omega\right)}^{2}+\left\Vert \nabla\eta\right\Vert _{L^{4}\left(\omega\right)}^{4}\right)\lesssim C\left(\mathbf{f},g\right)^{2}+C\left(\mathbf{f},g\right)+m^{2},\quad m=\int_{\omega}\eta dA.
    \end{equation}
    Using Poincar\'{e}'s inequality we further obtain 
    \begin{equation}
        \left\Vert \eta-m\right\Vert _{L^{2}\left(\omega\right)}\lesssim\left\Vert \nabla\eta\right\Vert _{L^{2}\left(\omega\right)}\lesssim\left(C\left(\mathbf{f},g\right)^{2}+C\left(\mathbf{f},g\right)+m^{2}\right)^{1/4}
    \end{equation}
and thus we  bound $\left\Vert \eta\right\Vert _{L_{t}^{\infty}H^{2}\left(\omega\right)}$ by $C\left(\mathbf{f},g\right)^{2}+C\left(\mathbf{f},g\right)+m^{2}$. In particular when $C\left(\mathbf{f},g\right),m\to0$ it will follow that $\left\Vert \eta\right\Vert _{L_{t}^{\infty}L^{\infty}\left(\omega\right)}\lesssim\left\Vert \eta\right\Vert _{L_{t}^{\infty}H^{2}\left(\omega\right)}\to0$ hence can be made arbitrarly small, as in \eqref{eqn:condition-eta-norm}.
    
\end{remark}

\begin{remark}[On the notations]
Further, if the geometry is {\em decoupled} from the solid deformation we shall use the analogous notations $V_{T,\text{per}}^{\delta},V_{S,\text{per}}^{\delta}$  whenever we deal with functions $(u, \eta)$ for which 
$
\text{tr}_{\delta}(\mathbf{u})=\mathbf{u}\left(t,\phi_{\delta(t)}\right)=\partial_{t}\eta\nu $ for a given function $\delta\in C\left(I\times\omega\right)$ with $\ \left\Vert \delta\right\Vert _{L^{\infty}\left(I\times\omega\right)}<\kappa$ prescribing a given time-changing domain.
If we omit to mention the term \emph{per} we refer to the closure of the  smooth functions which are not necessarily time-periodic. This might be the case when the functions are not yet proven to be time-periodic and thus merely $\left(\mathbf{u},\eta\right)\in V_{S}$ for example. 
\end{remark}
\begin{remark} \label{remark: Holder-cont}
We have $V_{K,\text{per}}\hookrightarrow C^{0,1-\theta}\left(\overline{I};C^{0,2\theta-1}\left(\omega\right)\right)$, and therefore  the displacement of the shell is H\"{o}lder continuous in time.
 Indeed, using the embeddings $H^{2}\left(\omega\right)\hookrightarrow H^{2\theta}\left(\omega\right)\hookrightarrow C^{0,2\theta-1}\left(\omega\right)$ for $\theta \in (1/2,1)$  (since $\omega\subset \mathbb{R}^2$) we find that
\begin{align*}\left\Vert \eta\left(t\right)-\eta\left(s\right)\right\Vert _{C^{0,2\theta-1}\left(\omega\right)} & \lesssim \norm{\eta\left(t\right)-\eta\left(s\right)}_{H^{2\theta}(\omega)}\\
 & \lesssim \left\Vert \eta\left(t\right)-\eta\left(s\right)\right\Vert _{H^{2}\left(\omega\right)}^{\theta}\left\Vert \eta\left(t\right)-\eta\left(s\right)\right\Vert _{L^{2}\left(\omega\right)}^{1-\theta}\\
 & \lesssim\left\Vert \eta\right\Vert _{L^{\infty}\left(I;H^{2}\left(\omega\right)\right)}^{\theta}\left\Vert \eta\right\Vert _{W^{1,\infty}\left(I;L^{2}\left(\omega\right)\right)}\left|t-s\right|^{1-\theta}.
\end{align*}
In particular, the condition $\eta(0,\cdot)=\eta(T,\cdot)$   holds in a strong sense, in contrast with   the  conditions  $\mathbf{u}(0,\cdot)=\mathbf{u}(T,\cdot)$ and $\partial_{t} \eta(0,\cdot)=\partial_{t} \eta (T,\cdot)$ which  can only occur in a weak  sense.
\end{remark}

\begin{definition}\label{def:weak-periodic-solution}
Let $m\in \mathbb{R}$.
A couple $(\mathbf{u},\eta) \in V_{S, \text{per}}$ with $\left\Vert \eta\right\Vert _{L^{\infty}\left(I\times\omega\right)}<1$ and $\int_{\omega}\eta dA=m$
is called a \emph{time-periodic weak solution} to \eqref{eqn:FSI-system} provided that:
\begin{enumerate}
    \item The following relation holds:
\begin{equation}\label{eq: weak-periodic-solution}
\begin{aligned}\int_{I}\int_{\Omega_{\eta\left(t\right)}}-\mathbf{u}\cdot\partial_{t}\mathbf{q}+\nabla\mathbf{u}:\nabla\mathbf{q}-\left(\mathbf{u}\cdot\nabla\right)\mathbf{q}\cdot\mathbf{u}dxdt & +\\
\int_{I}\left\langle K^{\prime}\left(\eta\right),\xi\right\rangle dt -\int_{I}\int_{\omega}\partial_{t}\eta\partial_{t}\xi dAdt & =\\
\int_{I}\int_{\Omega_{\eta\left(t\right)}}\mathbf{f}\cdot\mathbf{q}dxdt+\int_{I}\int_{\omega}g\xi dAdt\quad\forall\left(\mathbf{q},\xi\right)\in V_{T,\text{per}}=V_{T,\text{per}}^{\eta}.
\end{aligned}
\end{equation}

\item The following energy estimate holds:
 \begin{equation}\label{eqn:energy-estimate}
\sup_{t\in I}E\left(t\right)\lesssim C\left(\mathbf{f},g\right)^{2}+C\left(\mathbf{f},g\right)+m^{2}.
\end{equation}
with
\begin{equation}
E\left(t\right):=\frac{1}{2}\int_{\Omega_{\eta}\left(t\right)}\left|\mathbf{u}\left(t\right)\right|^{2}dx+\frac{1}{2}\int_{\omega}\left|\partial_{t}\eta\left(t\right)\right|^{2}dA+K\left(\eta\left(t\right)\right).
\end{equation}

\end{enumerate}
\end{definition}

\subsection{The main result}\label{ssec:main-result}
% Let us denote for a Banach space $X$ and $p\ge 1$ the space 
% \[L_{\text{per}}^{p}\left(I;X\right):=\left\{ \varphi\in C^{\infty}\left(\mathbb{R};X\right):\varphi\left(t\right)=\varphi\left(t+T\right)\ \forall t\in I\right\} ^{\left\Vert \cdot\right\Vert _{L^{p}\left(I:X\right)}}.\]

The main theorem of this work consists in the following
\begin{theorem}\label{thm:main} 

There exists a constant $C_{0}=C_{0}(\texttt{data})$    such that  for any $m \in \mathbb{R}$ and any given forces
\[
\left(\mathbf{f},g\right)\in L_{\text{per}}^{2}\left(I;L^{2}\left(\mathbb{R}^{3}\right)\right)\times L_{\text{per}}^{2}\left(I;L^{2}\left(\omega\right)\right)
\]
such that 
\[ m^2+
\int_{I}\int_{\mathbb{R}^{3}}\left|\mathbf{f}\right|^{2}dxdt+\int_{I}\int_{\omega}\left|g\right|^{2}dAdt\le C_0
\]
 the problem \eqref{eqn:FSI-system}  admits at least one  time-periodic weak solution $(\mathbf{u},\eta)$ in the sense of Definition~\ref{def:weak-periodic-solution}, with
\begin{equation}
\int_{\omega}\eta(t,\cdot)dA=m \quad \forall t\in I.
\end{equation} 

and satisfying 
\begin{equation}
    \sup_{t\in I}E\left(t\right)\lesssim C_{0}^{2}+C_{0}
\end{equation}
up to a constant depending on $\texttt{data}$ only. 
In particular it holds that \begin{equation}
\left\Vert \mathbf{u}\right\Vert _{L^{\infty}\left(I;L^{2}\left(\Omega_{\eta}\right)\right)\cap L^{2}\left(I;W_{\text{div}}^{1,2}\left(\Omega_{\eta}\right)\right)}+\left\Vert \eta\right\Vert _{L^{\infty}\left(I;H_{}^{2}\left(\omega\right)\right)\cap W^{1,\infty}\left(I;L^{2}\left(\omega\right)\right)}\lesssim C_{0}^{2}+C_{0}.
\end{equation}

Additionally,  the following  \emph{diffusion estimate} holds
\begin{equation}\label{eqn:energy-estimate-2-chap02}
\int_{I}\int_{\Omega_\eta\left(t\right)}\left|\nabla \mathbf{u}\right|^{2}dxdt\le\int_{I}\int_{\Omega_\eta\left(t\right)}\mathbf{f}\cdot \mathbf{u}dxdt+\int_{I}\int_{\omega}g\cdot\partial_{t}\eta dAdt.
\end{equation}
\end{theorem}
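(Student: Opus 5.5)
We will construct the solution through a Galerkin approximation on a decoupled problem, a single Leray–Schauder fixed point at the discrete level, and a limit passage $n\to\infty$, as announced in the introduction.

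\textbf{Step 1: the decoupled Galerkin problem.} Fix $n\in\mathbb{N}$ and $\varepsilon>0$. For a prescribed geometry $\delta$ with $\|\delta\|_{L^\infty_{t,x}}\le\kappa$, we use a finite basis $\{\xi_k\}_{k\le n}$ of $H^2(\omega)$ functions with zero mean (after subtracting $m\psi$). We lift it by $\mathcal{F}_\delta$ from Proposition~\ref{chap-appendix-prop-extension-plate}, and add interior solenoidal fields transported by the Piola map $\mathcal{J}_\delta$. This gives time-dependent bases in $V^\delta_{T}$. We seek $\mathbf{b}_n=(\mathbf{u}_n,\eta_n)$ in their span solving the weak formulation \eqref{eq: weak-periodic-solution} on $\Omega_\delta$, with $K'(\eta_n)$ kept nonlinear. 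This is an ODE system for the coefficients, and it is locally solvable.

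The geometry is first periodised: $\delta=\mathcal{P}_\varepsilon(\tilde\delta)$ replaces $\tilde\delta$ on $[T-\varepsilon,T]$ by the linear interpolant between $\tilde\delta(T-\varepsilon)$ and $\tilde\delta(0)$. As a result $\Omega_{\delta(0)}=\Omega_{\delta(T)}$.

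We then define $\mathcal{T}\colon\mathbf{a}_n\mapsto\mathbf{b}_n$, where $\delta:=\mathcal{P}_\varepsilon(\eta$-component of $\mathbf{a}_n)$, with initial data $\mathbf{b}_n(0)=\mathbf{a}_n(T)$ and $\mathbf{b}_n'(0)=\mathbf{a}_n'(T)$. The map $\mathcal{T}$ is continuous and compact on the finite-dimensional coefficient space $C^1(I;\mathbb{R}^{N})$, by continuous dependence of ODE solutions and Arzelà–Ascoli.

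\textbf{Step 2: a priori bounds for the Leray–Schauder set.} We must bound all solutions of $\mathbf{a}=\lambda\mathcal{T}\mathbf{a}$, $\lambda\in[0,1]$, independently of $\lambda$. For such a solution the energy at time $0$ equals $\lambda^2$ times the energy at time $T$, so $E_n(0)\le E_n(T)$. This comparison makes sense precisely because $\delta$ is periodised.

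With this inequality we rerun the formal argument of Section~\ref{ssec:a-priori-section-paper}. The energy balance \eqref{eqn:energy-balance} holds on the moving domain, with an error from the mismatch $\delta\ne\eta_n$ on $[T-\varepsilon,T]$. Integrated over $I$, it gives \eqref{eqn:diffusion-estimate}. Testing with $(\mathcal{F}_\delta(\eta_n-m\psi),\eta_n-m\psi)$, which is admissible since it lies in the Galerkin span, gives \eqref{eqn:formal-claim}. Here the identity $\langle K'(\eta),\eta\rangle\ge 2K(\eta)$ is used. Combining these yields \eqref{eqn:formal-final}, uniformly in $\lambda$, $n$ and $\varepsilon$.

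Remark~\ref{rmk:bound-eta-norm} together with $C_0$ small then gives $\|\eta_n\|_\infty\le\kappa/2$, and hence $\|\delta\|_\infty\le\kappa$. Leray–Schauder therefore yields a fixed point. At the fixed point $\eta_n=\mathcal{P}_\varepsilon\eta_n$ on $[0,T-\varepsilon]$, and the solution is $T$-periodic. Letting $\varepsilon\to0$ at fixed $n$ is straightforward, by finite dimensionality and the uniform bounds. This produces a periodic coupled Galerkin solution.

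\textbf{Step 3: limit $n\to\infty$.} The uniform bounds give weak-$*$ convergence in $V_{S}$. Remark~\ref{remark: Holder-cont} gives uniform Hölder convergence of $\eta_n$, and hence convergence of the domains, of $\mathcal{F}_{\eta_n}$ and of $K'(\eta_n)$. For $K'(\eta_n)$ we use the strong $W^{1,4}$ convergence from compact embeddings, with weak convergence of $\nabla^2\eta_n$ sufficing for the linear part.

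\textbf{Main obstacle.} The hard part is strong $L^2_tL^2_x$ convergence of $(\mathbf{u}_n,\partial_t\eta_n)$, which is needed for the convective term. We will use the splitting via $\mathcal{F}_{\eta_n}(\partial_t\eta_n)$ described in the introduction. The first piece is handled by mollification in time together with equicontinuity of the tested functionals. For the solenoidal remainder we will approximate by $\mathcal{P}_n\mathcal{J}_{\delta_{\tilde\sigma}}$-projected test functions. We will use the uniform $H^1$ convergence estimate and Hölder equicontinuity based on $\mathbf{u}_n\in L^{8/3}_tL^4_x$ and the explicit form of $\partial_t\mathcal{J}_{\eta_n}$.

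Finally, test functions in $V_{T,\text{per}}^{\eta}$ are approximated by the Galerkin ones. The energy and diffusion estimates \eqref{eqn:energy-estimate} and \eqref{eqn:energy-estimate-2-chap02} pass to the limit by lower semicontinuity.
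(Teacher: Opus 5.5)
There is a genuine gap in Steps 1--2. You pose the decoupled Galerkin problem using the coupled weak formulation \eqref{eq: weak-periodic-solution} transplanted to $\Omega_\delta$. That form is energy-consistent only when the domain moves with the plate.

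Test with $(\mathbf u_n,\partial_t\eta_n)$ and apply Reynolds' theorem on $\Omega_\delta$. The time-derivative term produces the interface flux $\tfrac12\int_\omega(\partial_t\eta_n)^2\partial_t\delta\,dA$, while the convective term produces $-\tfrac12\int_\omega(\partial_t\eta_n)^3\,dA$. What you actually obtain is
\[
\frac{d}{dt}E_n+\int_{\Omega_{\delta}}|\nabla\mathbf u_n|^2dx+\frac12\int_\omega(\partial_t\eta_n)^2\left(\partial_t\delta-\partial_t\eta_n\right)dA=\int_{\Omega_\delta}\mathbf f\cdot\mathbf u_n\,dx+\int_\omega g\,\partial_t\eta_n\,dA .
\]
You say the mismatch lives only on $[T-\varepsilon,T]$, but that holds only at a fixed point. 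On the Leray--Schauder set one has $\mathbf a_n=\lambda\mathbf b_n$. On $[0,T-\varepsilon]$ the geometry is therefore $\delta=\lambda\eta_n+(1-\lambda)m\psi$, and the defect there equals $\frac{\lambda-1}{2}\int_\omega(\partial_t\eta_n)^3\,dA$. On $[T-\varepsilon,T]$, $\partial_t\delta$ even contains a slope contribution of order $(1-\lambda)\varepsilon^{-1}$.

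This term is cubic, has no sign, and is not controlled by the energy. At fixed $n$ the inverse inequality only bounds it by $C(n)(\sup_tE_n)^{1/2}\int_I\|\nabla\mathbf u_n\|_{L^2}^2$. That cannot be absorbed without the very bound you are trying to prove. Hence $E_n(0)\le E_n(T)$ no longer yields \eqref{eqn:bad-case}, and the chain leading to \eqref{eqn:formal-final} breaks. So boundedness of the Leray--Schauder set, and with it the existence of the fixed point, is not established.

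The missing ingredient is the energy-consistent decoupled formulation \eqref{eq:dec-reg-per-wf}, \eqref{eqn:xk}. You write the convection in skew-symmetric form $\tfrac12(\mathbf u\cdot\nabla)\mathbf u\cdot\mathbf q-\tfrac12(\mathbf u\cdot\nabla)\mathbf q\cdot\mathbf u$. You also add the interface term $\tfrac12\int_\omega\partial_t\eta_n\,\partial_t\delta\,X_k\,dA$, which supplies exactly the Reynolds flux. Then \eqref{eqn:energy-1} holds with no defect for every prescribed $\delta$, and when $\delta=\eta$ the extra terms recombine into \eqref{eq: weak-periodic-solution}.

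With this change your plan coincides with the paper's. Your reordering of limits ($\varepsilon\to0$ at fixed $n$ before $n\to\infty$) is then a legitimate and slightly cleaner variant. At a fixed point the slope of $\mathcal P_\varepsilon$ on $[T-\varepsilon,T]$ is a difference quotient of the periodic $\mathbf b_n$, so it is bounded by $\|\mathbf b_n'\|_\infty$.

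Two smaller points:
\begin{enumerate}
\item $\mathcal T$ must be defined on all of $C^1(I;\mathbb R^N)$. You therefore have to truncate $\delta$ (e.g.\ at level $\kappa/2$) before the a posteriori bound $\|\eta_n\|_\infty\le\kappa/2$ can be used; without this the argument is circular.
\item The comparison $E_n(0)\le E_n(T)$ is an inequality, not an identity. For $\lambda\le1$ one only has $K(\lambda w)\le\lambda^2K(w)$ because of the quartic term. It also needs $\nabla\psi=0$ (take $\psi\equiv1$ on $\mathbb T^2$), since $m\psi$ does not scale with $\lambda$.
\end{enumerate}
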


\begin{remark}[The 2D/1D case]
Our proof can immediately be adapted to the 2D-1D setting, that is of a  fluid-beam  system. 
\end{remark}

\section{Proof of the main result}\label{sec:proof}

The proof proceeds in three steps: (i) a Galerkin discretization on a \emph{decoupled}, prescribed-geometry domain $\Omega_\delta$; (ii) a fixed-point argument via the Leray-Schauder theorem to recover the coupling $\delta = \eta$; and (iii) passage to the limit in the discretization parameter $n\to\infty$ and in the regularization parameters $\varepsilon,\sigma\to 0$.

\subsection{Decoupled problem and Galerkin basis}
In the following we assume that a sufficiently smooth geometry $\delta$ is \emph{given}, for example
\[
\delta\in C_{\text{per}}^{3}\left(I;C^{3}\left(\omega\right)\right),\quad\left\Vert \delta\right\Vert _{L^{\infty}\left(I\times\omega\right)}<\kappa.
\]
The mapping $\delta$ is prescribing  time-changing domains $\Omega_{\delta}$. We introduce the decoupled weak formulation: we call $(\mathbf{u},\eta)\in V_{S, \text{per}}^{\delta}$ is a time-periodic weak solution for the \emph{decoupled} problem if
\begin{equation}\label{eq:dec-reg-per-wf}
\begin{aligned}\int_{I}\int_{\Omega_{\delta\left(t\right)}}-\mathbf{u}\cdot\partial_{t}\mathbf{q}+\nabla\mathbf{u}:\nabla\mathbf{q}+\frac{1}{2}\left(\mathbf{u}\cdot\nabla\right)\mathbf{u}\cdot\mathbf{q}dxdt & +\\
\int_{I}\int_{\Omega_{\delta\left(t\right)}}-\frac{1}{2}\left(\mathbf{u}\cdot\nabla\right)\mathbf{q}\cdot\mathbf{u}dxdt+\int_{I}\left\langle K^{\prime}\left(\eta\right),\xi\right\rangle dt & +\\
\int_{I}\int_{\omega}-\frac{1}{2}\left(\partial_{t}\eta\right)\left(\partial_{t}\delta\right)\xi+\partial_{t}\eta\partial_{t}\xi dAdt & =\\
\int_{I}\int_{\Omega_{\delta\left(t\right)}}\mathbf{f}\cdot\mathbf{q}dxdt+\int_{I}\int_{\omega}g\xi dAdt
\end{aligned}
\end{equation}
for all $(\mathbf{q},\xi)\in V_{T,\text{per}}^{\delta}$. The convective term is re-written in the skew-symmetric form to ensure that the energy balance 
\begin{equation}\label{eqn:energy-1-dec}
\frac{d}{dt}E\left(t\right)+\int_{\Omega_{\delta\left(t\right)}}\left|\nabla\mathbf{u}\right|^{2}dx=\int_{\Omega_{\delta\left(t\right)}}\mathbf{f}\cdot\mathbf{u}dx+\int_{\omega}g\partial_{t}\eta dA
\end{equation}
holds at the discrete level.

We now construct the Galerkin basis. Since $\omega=\mathbb{T}^{2}$ there is no boundary condition on $\partial\omega$ and the elastic plate displacement belongs to $H^{2}(\omega)$. Let $(\hat{Y}_{k})_{k \ge 1}$ be a smooth basis of $\left\{ \hat{Y}\in H^{2}\left(\omega\right):\ \int_{\omega}\hat{Y}dA=0\right\} $ which is orthogonal in $L^{2} (\omega)$.  
We  extend each $\hat{Y}_k$ to a divergence-free function $\mathbf{Y}_k : \Omega \mapsto \mathbb{R}^{3}$ by using the extension operator from Proposition~\ref{chap-appendix-prop-extension-plate}.
% solving the Stokes problem \[\begin{cases}
% -\Delta \mathbf{U}_{k}+\nabla P_{k}=0 & \text{in}\ \Omega\\
% \text{div} \ \mathbf{U}_{k}=0 & \text{in}\ \Omega\\
% \mathbf{U}_{k}=\hat{Y}_{k}\mathbf{e}_3& \text{on}\ \partial\Omega
% \end{cases}\]
% Note that here we need the $C^3$ regularity of $\Omega$ in order to use the results of  Theorem~ \ref{chap05-thm-Stokes-reg}.
%and of \cite{GSS94}.
%Then we use the Piola transform from Lemma~ \ref{lm:Piola} and define 
\begin{equation}
\label{eq:Y}
\left(\mathbf{Y}_{k},Y_{k}\right):=\left(\mathcal{F}_{\delta}\hat{Y}_{k},\hat{Y}_{k}\mathbf{e}_{3}\right)\in V_{T,\text{per}}^{\delta}
\end{equation}
We consider also $(\hat{Z}_{k})_{k \ge 1}$ a smooth,  $L^{2}$-orthonormal basis of the space $H^{1}_{0, \text{div}}(\Omega)$ which we extend using the Piola transform from Lemma~ \ref{lm:Piola}
to \begin{equation}
\left(\mathbf{Z}_{k}\left(t,\cdot\right),Z_{k}\left(t,\cdot\right)\right)=\left(\mathcal{J}_{\mathcal{\delta}\left(t\right)}\hat{Z}_{k},0\right)\in V_{T,\text{per}}^{\delta}.
\end{equation}
One can note that $\left\{ \mathbf{Z}_{k}\left(t,\cdot\right)\right\} _{k\ge1}$ forms a basis of $H_{0,\text{div}}^{1}\left(\Omega_{\delta\left(t\right)}\right)$ for each $t\in I$

Let us finally define \begin{equation}\label{eqn:defn-Xk} 
\left(\mathbf{X}_{k}\left(t,\cdot\right),X_{k}\left(t,\cdot\right)\right):=\begin{cases}
\left(\mathbf{Y}_{(k+1)/2}\left(t,\cdot\right),Y_{(k+1)/2}\left(t,\cdot\right)\right) & k\ \text{odd}\\
\left(\mathbf{Z}_{k/2}\left(t,\cdot\right),0\right) & k\ \text{even}
\end{cases}\quad\text{for all }k\in\mathbb{N}
\end{equation}
\begin{remark}\label{remark:density-in-testfunctions}
The space 
\[
\text{span}\left\{ \left(\varphi\mathbf{X}_{k},\varphi X_{k}\right):\varphi\in C_{\text{per}}\left[0,T\right]\cap C^{1}\left(0,T\right),k\in\mathbb{N}\right\} 
\]
 is dense in the space of test functions $V_{T, \text{per}} ^{\delta}$. 
 Indeed, let  $\left(\mathbf{q},\xi\right)\in V_{T,\text{per}}^{\delta}$ be smooth.
 We approximate $\xi$ by $\tilde \xi=\sum_{k=1}^{n}a_{k}Y_{k}$ in $H^{1}_{t}H^{2}_{x}$ and
 we have $\left(\mathbf{q},\xi\right)\sim\left(\overline{\mathbf{\xi}},\tilde{\xi}\right)+\left(\mathbf{q-}\overline{\mathbf{\xi}},0\right)$ where $\overline{\xi}:=\sum_{k=1}^{n}a_{k}\mathbf{Y}_{k}$. The quantity $\mathbf{q-}\overline{\mathbf{\xi}}$ is divergence free and has trace equal to zero, and thus can be approximated by a linear combination of vectors $\mathbf{Z}_{k}$ in $H^{1}_{t,x}$.
 \end{remark} 
We now make the following ansatz:
\begin{equation}\label{eqn:ansatz-u_n-eta_n}
\begin{aligned}\delta_{n}= & \sum_{k=1}^{n}a_{n}^{k}\left(t\right)X_{k}+m\psi,\quad\left(t,x\right)\in I\times\omega\\
\eta_{n}\left(t,x\right)= & \sum_{k=1}^{n}b_{n}^{k}\left(t\right)X_{k}+m\psi,\quad\left(t,x\right)\in I\times\omega\\
\mathbf{u}_{n}\left(t,x\right)= & \sum_{k=1}^{n}\left(b_{n}^{k}\right)^{\prime}\left(t\right)\mathbf{X}_{k}\left(t,x\right),\quad\left(t,x\right)\in I\times\Omega_{\delta}
\end{aligned}
\end{equation}
The following observation is important:  we will understand \eqref{eqn:ansatz-u_n-eta_n} as follows: 
\emph{given 
\[\mathbf{a}_{n}\in C^{1}\left(I;\mathbb{R}^{n}\right)\]
 and therefore given $\delta_n$ we seek for the unknown   
 \[\mathbf{b}_{n}\in C^{1}\left(I;\mathbb{R}^{n}\right)\]
  which is the solution of the  system of differential equations generated by \eqref{eqn:xk}.
  }

Note that we have ensured that the coupling condition is fulfilled, that is  \[\text{tr}_{\delta}(\mathbf{u}_{n})=\partial_{t}\eta_{n}\mathbf{e}_3.
\]

Due to the estimate \eqref{eqn:y2} (see also the trace operator and Lemma~\ref{lm:trace}) we get that
\begin{equation}\label{eqn:trace(eta_n)}
\int_{\omega}\left|\partial_{t}\eta_{n}\left(t\right)\right|^{2}dA\lesssim\int_{\Omega_{\mathcal{\delta}\left(t\right)}}\left|\nabla\mathbf{u}_{n}\left(t\right)\right|^{2}dx\quad\forall t\in I.
\end{equation}
We now  seek for 
\[
\mathbf{b}_{n}:=\left(\mathbf{b}_{n}^{k}\right)_{k=\overline{1,n}}:I\mapsto\mathbb{R}^{n}
\]
such that for each $1\le k\le n$ the following equation is satisfied:

\begin{equation}\label{eqn:xk}
\begin{aligned}\int_{\Omega_{\delta\left(t\right)}}\partial_{t}\mathbf{u}_{n}\cdot\mathbf{X}_{k}dx+\frac{1}{2}\int_{\omega}\partial_{t}\eta_{n}\partial_{t}\delta X_{k}dA+\int_{\Omega_{\delta\left(t\right)}}\nabla\mathbf{u}_{n}\cdot\nabla\mathbf{X}_{k}dx & +\\
\frac{1}{2}\int_{\Omega_{\delta\left(t\right)}}\left(\mathbf{u}_{n}\cdot\nabla\right)\mathbf{u}_{n}\cdot\mathbf{X}_{k}dx-\frac{1}{2}\int_{\Omega_{\delta\left(t\right)}}\left(\mathbf{u}_{n}\cdot\nabla\right)\mathbf{X}_{k}\cdot\mathbf{u}_{n}dx & +\\
\int_{\omega}\partial_{tt}\eta_{n}X_{k}dA+\left\langle K^{\prime}\left(\eta_{n}\right),X_{k}\right\rangle  & =\\
\int_{\Omega_{\delta\left(t\right)}}\mathbf{f}\cdot\mathbf{X}_{k}dx+\int_{\omega}gX_{k}dA\quad\forall t\in I
\end{aligned}
\end{equation}
Please note  that   \eqref{eqn:xk} reads as a second order system of  differential equations with the unknown $\mathbf{b}_n$. Now we prove that for any given initial values $\mathbf{b}_{n}\left(0\right),\mathbf{b}_{n}^{\prime}\left(0\right)$ this system \eqref{eqn:xk} has a unique solution. 
Indeed,  the coefficient of $\left(\mathbf{b}_{n}\right)^{\prime\prime}$ is given by the mass matrix
\[
M\left(t\right):=\left(\int_{\Omega_{\delta\left(t\right)}}\mathbf{X}_{i}\cdot\mathbf{X}_{j}dx\right)_{1\le i,j\le n}+I_{n}=:\tilde{M}\left(t \right)+I_n.
\]
Since for any $\xi \in \mathbb{R}^{n}$ we have that 
\begin{equation}\label{eqn:M(t)-pos-def}
\begin{aligned}M\left(t\right)\xi\cdot\xi & =\tilde{M}\left(t\right)\xi\cdot\xi+\text{diag}\left(1,0,1,0,\ldots\right)\xi\cdot\xi\\
 & \ge\sum_{i,j=1}^{n}\xi_{i}\xi_{j}\int_{\Omega_{\delta\left(t\right)}}\mathbf{X}_{i}\cdot\mathbf{X}_{j}dx\\
 & \ge\int_{\Omega_{\delta\left(t\right)}}\left(\sum_{i=1}^{n}\xi_{i}\mathbf{X}_{i}\right)\cdot\left(\sum_{j=1}^{n}\xi_{j}\mathbf{X}_{j}\right)dx\\
 & \ge0
\end{aligned}
\end{equation}
it follows that $M\xi=0$ implies that $\sum_{i}\xi_i \mathbf{X}_{i}=0$ and this in turn  yields (after applying the inverse of the Piola transform $\mathcal{J}_{\delta(t)}$) that $\xi=\mathbf{0}$ due to linear independence of the vectors from \eqref{eqn:defn-Xk}. This ensures that $M(t)$ is positive definite.

Now, by eventually denoting $\beta_{n}^{i}:=\left(\mathbf{b}_{n}^{i}\right)^{\prime}$
 it is not difficult to employ a Picard-Lindel\"{o}f result as in Section~\ref{sec:appendix-integro-diff}  since all the terms that appear are locally-Lipschitz in $\beta_n$. This  proves that $\mathbf{b}_n$ exists in an interval $[0,T_0]$ for some $T_0 >0$.

Let us consider the discrete energy
\begin{equation}\label{eqn:def-E(t)}
E_{n}\left(t\right):=\frac{1}{2}\int_{\Omega_{\delta\left(t\right)}}\left|\mathbf{u}_{n}\left(t\right)\right|^{2}dx+\frac{1}{2}\int_{\omega}\left|\partial_{t}\eta_{n}\left(t\right)\right|^{2}+K\left(\eta_{n}\left(t\right)\right),\quad t\in I.
\end{equation}
By multiplying  \eqref{eqn:xk} with $\mathbf{b}_{n}^{k}$ and summing for  $k=\overline{1,n}$ we get that
 \begin{equation}\label{eqn:energy-1}
\frac{d}{dt}E_{n}\left(t\right)+\int_{\Omega_{\delta\left(t\right)}}\left|\nabla\mathbf{u}_{n}\right|^{2}dx=\int_{\Omega_{\delta\left(t\right)}}\mathbf{f}_{n}\cdot\mathbf{u}_{n}dx+\int_{\omega}g_{n}\partial_{t}\eta_{n}dA
\end{equation}
A Gronwall-type argument ensures that 
\begin{equation}\label{eqn:cons-gronwall}
\sup_{t\in\left[0,T\right]}E_{n}\left(t\right)<\infty 
\end{equation}
and this  then allows us to extend $\mathbf{b}_n$ to the whole  interval  $[0,T]$.

Let us also observe that in view of \eqref{eqn:M(t)-pos-def} and \eqref{eqn:cons-gronwall} we can obtain  that
\[
\left\Vert \left(\mathbf{b}_{n}^{k}\right)^{\prime\prime}\right\Vert _{L^{\infty}\left(I\right)}\le C<\infty
\] and thus we obtain 
\[\left\Vert \mathbf{b}_{n}\right\Vert _{C^{2}\left(I\right)}<C\left(n\right)<\infty.
\]

\subsection{The key estimate: uniform energy bound}
The following proposition is the main effort to obtain the existence of a fixed point. Let us denote, for simplicity $\delta=\delta_{n}$, and $\Omega_{\delta}=\Omega_{\delta_{n}}$.

\begin{proposition}\label{prop: invariant-ball} 
Suppose that \[E_{n} (T) \ge E_{n} (0).\]
Then
there exists a constant    
\begin{equation}\label{eqn:R-chap02}
R=c\left(\texttt{data}\right)\cdot\left(C\left(\mathbf{f},g\right)^{2}+C\left(\mathbf{f},g\right)+m^{2}\right)>0
\end{equation}
for which
 \[E_{n} \left(0 \right) \le E_{n} \left(T \right) \le R.\]
By integrating in time from $0$ to $t\in I$ we have
\begin{equation}\label{eqn:chap02-bound-E-n}
\sup_{n\ge1}\sup_{t\in I}E_{n}\left(t\right)\lesssim R+C(\mathbf{f},g)<\infty.
\end{equation}

\end{proposition}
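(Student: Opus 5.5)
The plan is to reproduce the formal a-priori argument of Section~\ref{ssec:a-priori-section-paper} at the Galerkin level. The hypothesis $E_n(T)\ge E_n(0)$ replaces exact periodicity.

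\emph{Dissipation bound.} Integrate the discrete energy balance \eqref{eqn:energy-1} over $I$. Since $E_n(T)-E_n(0)\ge 0$, this gives
\[
\int_I\int_{\Omega_{\delta(t)}}|\nabla\mathbf{u}_n|^2\,dx\,dt\le \int_I\int_{\Omega_{\delta(t)}}\mathbf{f}\cdot\mathbf{u}_n\,dx\,dt+\int_I\int_\omega g\,\partial_t\eta_n\,dA\,dt .
\]
Poincar\'e's inequality \eqref{eqn:y1} on $\Omega_{\delta(t)}$ holds uniformly because $\|\delta\|_\infty<\kappa$. Combined with the trace bound \eqref{eqn:trace(eta_n)} and Young's inequality, this yields the analogue of \eqref{eqn:diffusion-estimate}:
\[
\|\mathbf{u}_n\|^2_{L^2_tH^1_x}+\|\partial_t\eta_n\|^2_{L^2_tL^2_x}\lesssim C(\mathbf{f},g),
\]
with constants depending only on \texttt{data}.

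\emph{Mean-value step.} Choose $t_0$ with $E_n(t_0)=\frac1T\int_I E_n\,dt$. Integrating \eqref{eqn:energy-1} from $t_0$ to an arbitrary $t$ gives $\sup_t E_n\lesssim \int_I K(\eta_n)\,dt+C(\mathbf{f},g)$, since the kinetic parts of $\int_I E_n$ are already controlled.

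\emph{Control of $\int_I K(\eta_n)$.} Test the Galerkin system \eqref{eqn:xk} with $\eta_n-m\psi$. By the ansatz \eqref{eqn:ansatz-u_n-eta_n} this equals $\sum_k b_n^k X_k$, so it is an admissible combination: multiply \eqref{eqn:xk} by $b_n^k(t)$ and sum. The corresponding fluid test field is $\sum_k b_n^k\mathbf{X}_k$. For odd indices this is $\mathcal{F}_\delta$ applied to the mean-free plate part; the even indices contribute the Piola terms $\mathbf{Z}_k$. Integrate in time and move the time derivatives onto the test functions by parts. The boundary terms at $0$ and $T$ then do \emph{not} vanish, since the Galerkin solution is not yet periodic. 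They have the form
\[
\Big[\int_{\Omega_{\delta}}\mathbf{u}_n\cdot\textstyle\sum_k b_n^k\mathbf{X}_k\,dx+\int_\omega\partial_t\eta_n(\eta_n-m\psi)\,dA\Big]_0^T ,
\]
and each is bounded by $\sup_t E_n$ times $\|\eta_n\|_{L^\infty_tH^2_x}+|m|$. This is where I expect the main obstacle. One option is to absorb these terms as $\theta\sup_tE_n$ plus a smallness contribution from $\|\eta_n\|_{L^\infty}<\kappa$. The cleaner option, which I would try first, is to use that the fixed-point map is built so that the relevant data are matched at $0$ and $T$ through $\mathcal{P}_\varepsilon$; if that is available the two endpoint contributions cancel up to the mismatch $E_n(T)-E_n(0)$, which is then controlled.

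\emph{Bulk terms and conclusion.} The coercivity identity $\langle K'(\eta_n),\eta_n\rangle\ge 2K(\eta_n)$ produces $2\int_I K(\eta_n)\,dt$ on the left. The remaining terms $I_1,\dots,I_7$ are estimated exactly as in \eqref{eqn:chap02-estimate-K-eta-discrete}. This uses Proposition~\ref{chap-appendix-prop-extension-plate} for $\|\mathcal{F}_\delta(\cdot)\|_{W^{1,2}_tL^2_x\cap L^\infty_tW^{1,2}_x}$, with the time derivative of the extension controlled through $\partial_t\delta$ and $\partial_t\eta_n$. The quadratic convective term is bounded by $\|\mathbf{u}_n\|^2_{L^2_tH^1_x}\lesssim C(\mathbf{f},g)$. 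Then Young's inequality with parameter $\theta$ gives
\[
2\int_I K(\eta_n)\,dt\lesssim\theta\sup_tE_n+\theta^{-1}\big(C(\mathbf{f},g)^2+C(\mathbf{f},g)+m^2\big).
\]
Taking $\theta$ small and combining with the mean-value step yields $\sup_tE_n\le R$ with $R$ as in \eqref{eqn:R-chap02}, and in particular $E_n(0)\le E_n(T)\le R$. The final bound \eqref{eqn:chap02-bound-E-n} follows by integrating \eqref{eqn:energy-1} from $0$ to $t$, starting from $E_n(0)\le R$ and adding the forcing contribution $\lesssim C(\mathbf{f},g)$. All constants depend only on \texttt{data} and are independent of $n$.
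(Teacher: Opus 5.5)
Your route is the paper's: dissipation bound from $E_n(T)\ge E_n(0)$, the mean-value step, testing with $\eta_n-m\psi$, the identity $\langle K'(\eta_n),\eta_n\rangle\ge 2K(\eta_n)$, and absorption. You are also right that integrating by parts in time leaves endpoint terms $[B]_0^T$, with $B(t)=\int_{\Omega_{\delta(t)}}\mathbf{u}_n\cdot\mathcal{F}_\delta(\eta_n-m\psi)\,dx+\int_\omega\partial_t\eta_n(\eta_n-m\psi)\,dA$. In Section~\ref{ssec:a-priori-section-paper} these vanish only because of exact periodicity, and the paper's one-line proof passes over them as well. But your proposal leaves them open, and neither of your options closes them. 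The bound $|B(t)|\lesssim\sup_tE_n\cdot(\ldots)$ is of the same order as the quantity being estimated, so it cannot be absorbed. Option (a) has no smallness to offer: only the prescribed geometry $\delta$ is truncated, $\eta_n$ has no a priori $L^\infty$ bound, and $\kappa$ is a fixed constant anyway. At best you would get an additive $c\,\kappa^2$ in $R$, which is not of the required form $c(\texttt{data})(C(\mathbf{f},g)^2+C(\mathbf{f},g)+m^2)$, and that form is used later to remove the truncation. Option (b) is not available under the stated hypothesis. $\mathcal{P}_\varepsilon$ only gives $\delta(0)=\delta(T)$, the Galerkin solution itself is not matched at the endpoints, and $B$ is a cross term rather than a piece of $E_n$. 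So nothing relates $B(T)-B(0)$ to $E_n(T)-E_n(0)$. (It can be rescued only inside the Leray--Schauder set: there $\mathbf{b}_n(0)=\lambda\mathbf{b}_n(T)$ and $\mathbf{b}_n'(0)=\lambda\mathbf{b}_n'(T)$ give $B(0)=\lambda^2B(T)$, and $(1-\lambda^2)|B(T)|$ is controlled via the energy mismatch. That is a different statement, and you do not carry it out.)

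The missing observation is that the kinetic factor in $B$ is already controlled in $L^2_t$ by your own first step: $\int_I(\|\mathbf{u}_n\|_{L^2}^2+\|\partial_t\eta_n\|_{L^2}^2)\,dt\lesssim C(\mathbf{f},g)$. So do not test on all of $[0,T]$. Choose $t_1\in[0,T/4]$ and $t_2\in[3T/4,T]$ with $\|\mathbf{u}_n(t_i)\|_{L^2}^2+\|\partial_t\eta_n(t_i)\|_{L^2}^2\lesssim C(\mathbf{f},g)$, and test on $[t_1,t_2]$. Alternatively, use a temporal cutoff $\chi$ and bound $\int_I\chi'B\,dt$ by an $L^2_t\times L^\infty_t$ pairing. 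Then $|B(t_i)|\lesssim C(\mathbf{f},g)^{1/2}\bigl((\sup_tE_n)^{1/2}+|m|\bigr)$, which Young's inequality absorbs. Run the mean-value step on $[T/4,3T/4]$; moving backwards in time from $t_0$ costs only $\int_I\|\nabla\mathbf{u}_n\|^2\lesssim C(\mathbf{f},g)$.

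Two smaller points. First, multiply only the odd-index equations by $b_n^k$. The even $b_n^k$ are time antiderivatives of fluid coefficients that $E_n$ does not control, so the $\mathbf{Z}_k$ contributions you include cannot be estimated. By contrast, $\sum_{k\ \mathrm{odd}}b_n^k\mathbf{X}_k=\mathcal{F}_\delta(\eta_n-m\psi)$ is exactly the test field you want. Second, after Reynolds' theorem the decoupled formulation leaves $\tfrac12\int_I\int_\omega\partial_t\eta_n\,\partial_t\delta\,(\eta_n-m\psi)\,dA\,dt$. This needs a bound on $\|\partial_t\delta\|_{L^2_{t,x}}$, which $E_n(T)\ge E_n(0)$ does not supply for an arbitrary prescribed $\delta$. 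It has to be imported from the relation between $\delta$ and $\lambda\eta_n$ in the fixed-point argument.
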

\begin{proof}
The assumption $ E_{n} (0) \le E_{n} (T) $  yields 
\begin{equation}\label{eqn:bad-case}
\int_{I}\int_{\Omega_{\delta\left(t\right)}}\left|\nabla\mathbf{u}_{n}\right|^{2}dxdt\le\int_{I}\int_{\Omega_{\delta\left(t\right)}}\mathbf{f}_{n}\cdot\mathbf{u}_{n}dxdt+\int_{I}\int_{\omega}g_{n}\partial_{t}\eta_{n}dAdt.
\end{equation}
From  \eqref{eqn:bad-case}, Young's inequality,  Poincar\'{e}'s inequality and the trace operator (see Lemma~\ref{lm:trace}) we get
\begin{equation}\label{eqn:consequence-claim}
\begin{aligned}\int_{I}\int_{\Omega_{\delta\left(t\right)}}\left|\mathbf{u}_{n}\right|^{2}dx+\int_{I}\int_{\omega}\left|\partial_{t}\eta_{n}\right|^{2}dAdt & \lesssim\\
\int_{I}\int_{\Omega_{\delta\left(t\right)}}\left|\nabla\mathbf{u}_{n}\right|^{2}dxdt & \lesssim\\
\int_{I}\int_{\Omega_{\delta\left(t\right)}}\left|\mathbf{f}\right|^{2}dxdt+\int_{I}\int_{\omega}\left|g\right|^{2}dAdt.
\end{aligned}
\end{equation}
We can now proceed as in Subsection~\ref{ssec:a-priori-section-paper} (replacing $E(0)=E(T)$ by the weaker assumption $E(T)\ge E(0)$). Testing with $\eta-m\psi$ corresponds to taking the scalar product with $\mathbf{b}_{n}$.
This proves  Proposition~\ref{prop: invariant-ball}.
\end{proof}

\subsection{Fixed-point argument and periodicity}
\subsubsection*{Prescribing an artificially periodic geometry}
Given a function $f\in C^{0}(\overline{I})$ and $\varepsilon>0$,  let us consider the following operator 
\begin{equation}
\mathcal{P}_{\varepsilon}f\left(t\right):=\begin{cases}
f\left(t\right) & 0\le t<T-\varepsilon\\
\gamma_{\varepsilon}\left(t\right) & T-\varepsilon\le t\le 
\end{cases}
\end{equation}
where  $\gamma_{\varepsilon}$ is the line-segment that joins $f(T-\varepsilon)$ and $f(0)$, i.e.
\[
\gamma_{\varepsilon}\left(t\right)=a_{\varepsilon}t+b_{\varepsilon},\quad a_{\varepsilon}=\frac{f\left(0\right)-f\left(T-\varepsilon\right)}{\varepsilon},\ b_{\varepsilon}=f\left(0\right)-\frac{f\left(0\right)-f\left(T-\varepsilon\right)}{\varepsilon}T
\]

\begin{lemma}
The operator $\mathcal{P}_{\varepsilon}$ enjoys the following properties:
\begin{enumerate}[(i)]
\item $\mathcal{P}_{\varepsilon}$ is a continuous operator from $C^{0}(I)$ to itself;
    \item $\left\Vert \mathcal{P}_{\varepsilon}f\right\Vert _{C^{0}\left(I\right)}\le\left\Vert f\right\Vert _{C^{0}\left(\left[0,T-\varepsilon\right]\right)}$;
     \item $\mathcal{P}_{\varepsilon}f$ is time-periodic, that is $\mathcal{P}_{\varepsilon}f\left(0\right)=\mathcal{P}_{\varepsilon}f\left(T\right)$;
    \item  if $f$ is periodic, then $\mathcal{P}_{\varepsilon}f\to f$ as $\varepsilon\to 0$ in $C^{0}(I)$.

\end{enumerate}
\end{lemma}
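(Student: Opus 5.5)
The plan is to verify the four properties directly from the definition of $\mathcal{P}_\varepsilon$, using only that on $[T-\varepsilon,T]$ the function $\mathcal{P}_\varepsilon f$ is the affine interpolant $\gamma_\varepsilon$. By construction $\gamma_\varepsilon(T-\varepsilon)=f(T-\varepsilon)$ and $\gamma_\varepsilon(T)=f(0)$; both follow from the formulas for $a_\varepsilon,b_\varepsilon$ by a one-line substitution. Hence $\mathcal{P}_\varepsilon f$ is continuous on $\overline{I}$ (the two pieces glue at $T-\varepsilon$), and $\mathcal{P}_\varepsilon f(T)=f(0)=\mathcal{P}_\varepsilon f(0)$, which is (iii). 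The same computation applies verbatim to $X$-valued $f$, such as $\delta_n$ with values in a finite-dimensional space or $C^3(\omega)$.

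For (ii), on $[0,T-\varepsilon)$ we have $|\mathcal{P}_\varepsilon f(t)|=|f(t)|$. On $[T-\varepsilon,T]$, writing $\gamma_\varepsilon(t)=\lambda f(0)+(1-\lambda)f(T-\varepsilon)$ with $\lambda=(t-T+\varepsilon)/\varepsilon\in[0,1]$ shows that the value is a convex combination of $f(0)$ and $f(T-\varepsilon)$. Its norm is therefore at most $\max\{|f(0)|,|f(T-\varepsilon)|\}\le\|f\|_{C^0([0,T-\varepsilon])}$.

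For (i), $\mathcal{P}_\varepsilon$ is linear, since $\gamma_\varepsilon$ depends linearly on the pair $(f(0),f(T-\varepsilon))$. Continuity then follows from (ii) applied to $f-g$:
\[
\|\mathcal{P}_\varepsilon f-\mathcal{P}_\varepsilon g\|_{C^0(I)}\le\|f-g\|_{C^0(I)},
\]
so $\mathcal{P}_\varepsilon$ is even $1$-Lipschitz.

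For (iv), the step needing some care, assume $f(0)=f(T)$. The two functions agree on $[0,T-\varepsilon)$. For $t\in[T-\varepsilon,T]$ we use the convex-combination form, $f(t)=\lambda f(t)+(1-\lambda)f(t)$, to estimate
\[
|\mathcal{P}_\varepsilon f(t)-f(t)|\le\lambda|f(0)-f(t)|+(1-\lambda)|f(T-\varepsilon)-f(t)|.
\]
Since $f(0)=f(T)$ and $|t-T|\le\varepsilon$, both terms are bounded by $\varpi_f(\varepsilon)$, where $\varpi_f$ is the modulus of continuity of $f$ on the compact interval $\overline{I}$. Uniform continuity gives $\varpi_f(\varepsilon)\to0$, hence $\|\mathcal{P}_\varepsilon f-f\|_{C^0(I)}\le\varpi_f(\varepsilon)\to0$.

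The only real point is to notice that periodicity is used exactly to replace $f(0)$ by $f(T)$, which is close to $f(t)$ near $T$. Without periodicity the affine segment has slope of order $\varepsilon^{-1}$ and $\mathcal{P}_\varepsilon f$ does not converge. For this reason, convergence will only be claimed in $C^0$, not in $C^1$.
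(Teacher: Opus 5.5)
Your proof is correct and follows essentially the same route as the paper. The paper leaves (i)--(iii) as easy and proves (iv) by writing $\gamma_\varepsilon(t)-f(t)=a_\varepsilon(t-T)+f(T)-f(t)$ on $[T-\varepsilon,T]$, then using $f(0)=f(T)$ and continuity of $f$ at $T$; your convex-combination form is an equivalent way of organizing that same estimate (controlled by the modulus of continuity $\varpi_f(\varepsilon)$ rather than by $2\sup_{t\in[T-\varepsilon,T]}|f(T)-f(t)|$), and your linearity-plus-(ii) argument supplies the details for (i)--(iii) that the paper omits.
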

\begin{proof}
    The first three properties are easy to prove. For (iv) we observe that 
    \begin{equation}
        \begin{aligned}\left|\mathcal{P}_{\varepsilon}f\left(t\right)-f\left(t\right)\right|\le & \sup_{t\in\left[T-\varepsilon,T\right]}\left|a_{\varepsilon}t+b_{\varepsilon}-f\left(t\right)\right|\quad\left(f\left(T\right)=f\left(0\right)\right)\\
\le & \sup_{t\in\left[T-\varepsilon,T\right]}\left|\frac{f\left(T\right)-f\left(T-\varepsilon\right)}{\varepsilon}t+f\left(T\right)-\frac{f\left(T\right)-f\left(T-\varepsilon\right)}{\varepsilon}T-f\left(t\right)\right|\\
\le & \left|\frac{f\left(T\right)-f\left(T-\varepsilon\right)}{\varepsilon}\right|\left(T-t\right)+\sup_{t\in\left[T-\varepsilon,T\right]}\left|f\left(T\right)-f\left(t\right)\right|\\
\le & 2\sup_{t\in\left[T-\varepsilon,T\right]}\left|f\left(T\right)-f\left(t\right)\right|\xrightarrow{\varepsilon\to0}0
\end{aligned}
    \end{equation}
\end{proof}

It is \emph{essential}, in order to obtain the estimates that will be needed below, to prescribe a \emph{time-periodic} domain. Since the given deformation $\delta_n$ from \eqref{eqn:ansatz-u_n-eta_n} is in general not time-periodic, we update it to 
\[
\delta_{n,\varepsilon}=\sum_{k=1}^{n}\mathcal{P}_{\varepsilon}\mathbf{a}_{n}^{k}\left(t\right)X_{k}+m\psi.
\]
In order not to overload the notation, we shall assume that the prescribed geometry is $\Omega_{\delta_{n,\varepsilon}}$ even if denoted $\Omega_{\delta}$.

\subsubsection{Solving the coupling and the periodicity}

Consider the space 
$X:=C^{1}\left(I;\mathbb{R}^{n}\right)$. 
Given $\mathbf{a}_n \in X$, by eventually truncating $\delta$ via  $\min\left\{ \left|\delta\right|,\frac{1}{2}\kappa\right\}$ and 
applying the regularizing operator $\mathcal{R}_{\sigma}$ from Lemma~\ref{lm:regularizers} for $\sigma  >0$,
we may assume that 
\[
\delta=\delta(\varepsilon,\sigma)\in C^{3}\left(I\times\omega\right),\quad\left\Vert \delta\right\Vert _{L_{t,x}^{\infty}}\le\frac{\kappa}{2}.
\]
Then,  let us define the mapping 
\begin{equation}
\mathcal{T}:X\mapsto X,\quad\mathcal{T}:\mathbf{a}_{n}\mapsto\mathbf{b}_{n}
\end{equation}
which assigns to each $\mathbf{a}_{n}$ (and hence to each $\delta_n$) as in \eqref{eqn:ansatz-u_n-eta_n}, the unique solution $\mathbf{b}_n$ of \eqref{eqn:xk} with initial values
\[\mathbf{b}_{n}\left(0\right)=\mathbf{a}_{n}\left(T\right),\quad\mathbf{b}_{n}^{\prime}\left(0\right)=\mathbf{a}_{n}^{\prime}\left(T\right).\]
We then have the following:
\begin{proposition}
The mapping $ \mathcal{T}:X \mapsto X$ admits at least one fixed point.
\end{proposition}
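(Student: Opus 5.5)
We will apply the Leray--Schauder (Schaefer) fixed point theorem to $\mathcal{T}$ on $X=C^{1}(I;\mathbb{R}^{n})$. We must show that $\mathcal{T}$ is continuous and compact, and that the set
\[
\Lambda:=\{\mathbf{a}_n\in X:\ \mathbf{a}_n=\lambda\mathcal{T}\mathbf{a}_n \text{ for some } \lambda\in[0,1]\}
\]
is bounded in $X$.

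\emph{Continuity.} Take $\mathbf{a}_n^{j}\to\mathbf{a}_n$ in $X$. By property (i) of $\mathcal{P}_\varepsilon$, and since truncation and the regularizer $\mathcal{R}_\sigma$ from Lemma~\ref{lm:regularizers} are continuous (for fixed $\sigma$, $\mathcal{R}_\sigma$ maps $C^0$ continuously into $C^3$), the geometries converge: $\delta^{j}\to\delta$ in $C^{3}(I\times\omega)$. As a result, the coefficients of \eqref{eqn:xk} converge uniformly on $I$. These coefficients are the mass matrix $M(t)$, the Piola-transformed basis functions $\mathbf{X}_k$ and their derivatives, and the terms involving $\partial_t\delta$. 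The initial data $(\mathbf{a}_n^{j}(T),(\mathbf{a}_n^{j})'(T))$ converge as well. The matrices $M(t)$ are uniformly positive definite, so the system can be put in the normal form $\mathbf{b}''=F(t,\mathbf{b},\mathbf{b}')$ with locally Lipschitz $F$. By \eqref{eqn:cons-gronwall} the solutions stay in a fixed compact set, and continuous dependence for ODEs then gives $\mathbf{b}_n^{j}\to\mathbf{b}_n$ in $C^{1}(I)$.

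\emph{Compactness.} For bounded $\mathbf{a}_n$, the Gronwall bound together with the positive definiteness of $M(t)$ bounds $\|\mathbf{b}_n\|_{C^{2}(I)}$ uniformly. Arzel\`a--Ascoli then shows that $\mathcal{T}$ maps bounded sets into relatively compact subsets of $C^{1}$.

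\emph{A priori bound.} This is the main step. Let $\mathbf{a}_n=\lambda\mathcal{T}\mathbf{a}_n$ with $\lambda\in(0,1]$; the case $\lambda=0$ is trivial. Write $\mathbf{b}_n=\mathcal{T}\mathbf{a}_n$, so that $\mathbf{a}_n=\lambda\mathbf{b}_n$. Then $\mathbf{b}_n(0)=\mathbf{a}_n(T)=\lambda\mathbf{b}_n(T)$ and $\mathbf{b}_n'(0)=\lambda\mathbf{b}_n'(T)$.

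The prescribed geometry is periodized by $\mathcal{P}_\varepsilon$, so $\Omega_{\delta(0)}=\Omega_{\delta(T)}$ and the basis functions $\mathbf{X}_k$ coincide at $t=0$ and $t=T$. This is exactly what makes $E_n(0)$ and $E_n(T)$ comparable. The kinetic parts are quadratic forms in $\mathbf{b}_n'$ with the same matrix at both endpoints, so they scale by $\lambda^{2}\le1$.

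For the elastic part we use $\eta_n=\sum b_n^kX_k+m\psi$. At $t=0$ the oscillating part is $\lambda$ times the one at $T$, which gives
\[
K(\eta_n(0))=K(\lambda\tilde\eta+m\psi)\qquad\text{versus}\qquad K(\tilde\eta+m\psi).
\]
If $m\psi$ were absent, the form $K(\eta)=\int|\nabla\eta|^4+|\nabla^2\eta|^2$ would give $K(\lambda\tilde\eta)\le K(\tilde\eta)$, since each term is monotone under scaling by $\lambda\le1$. The shift $m\psi$ breaks this monotonicity. The first thing we will try is to choose $\psi$ constant, $\psi\equiv1$ on $\omega=\mathbb{T}^2$, which satisfies \eqref{eqn:psi-bump-function-test-eta}; then $\nabla(m\psi)=0$ and the monotonicity is exact. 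If $\psi$ must be non-constant, we will instead estimate the discrepancy by a constant times $m^2$ plus lower-order terms and absorb it into $R$.

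Either way we obtain $E_n(0)\le E_n(T)$, possibly up to an $O(m^2)$ error. Proposition~\ref{prop: invariant-ball} then gives $\sup_t E_n(t)\lesssim R+C(\mathbf{f},g)$, uniformly in $\lambda$. In finite dimensions, together with the positive definiteness of $M(t)$ and the coercivity of $K$ on the span of the $X_k$, this bounds $\|\mathbf{b}_n\|_{C^1}$, and hence $\|\mathbf{a}_n\|_{C^1}\le\|\mathbf{b}_n\|_{C^1}$. Schaefer's theorem then yields a fixed point.

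The main obstacle is this endpoint comparison. It requires, first, the periodization of the geometry, and second, checking that the regularization $\mathcal{R}_\sigma$ and the truncation keep $\delta(0)=\delta(T)$. For the latter we will mollify only in space, or mollify periodically in time, so that the endpoint matching is preserved.
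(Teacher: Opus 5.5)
The overall route matches the paper's: Leray--Schauder on $X=C^{1}(I;\mathbb{R}^{n})$, compactness from $\mathcal{T}(X)\subset C^{2}(I)\hookrightarrow\hookrightarrow C^{1}(I)$, and continuity from well-posedness of \eqref{eqn:xk}. The Leray--Schauder set is then bounded via $E_n(0)\le E_n(T)$ on the periodised geometry and Proposition~\ref{prop: invariant-ball}.

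At one point you are more careful than the paper. Its computation uses $\eta_n(T)=\lambda^{-1}\eta_n(0)$, which is false for $m\neq0$ because of the shift $m\psi$. Your choice $\psi\equiv1$ is admissible since $|\mathbb{T}^2|=1$, and it makes the comparison exact:
$K(\lambda\tilde\eta+m)=\lambda^4\|\nabla\tilde\eta\|_{L^4}^4+\lambda^2\|\nabla^2\tilde\eta\|_{L^2}^2\le K(\tilde\eta+m)$.
Drop the fallback for non-constant $\psi$. Cross terms such as $m\int_\omega|\nabla\tilde\eta|^2\nabla\tilde\eta\cdot\nabla\psi\,dA$ are not $O(m^2)$, and $E_n(0)\le E_n(T)+O(m^2)$ is not the hypothesis of Proposition~\ref{prop: invariant-ball}.

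The gap is your last step, ``this bounds $\|\mathbf{b}_n\|_{C^1}$''; the paper's proof passes over the same point. By \eqref{eqn:defn-Xk}, $X_k=0$ for even $k$. So the even-indexed coefficients $b_n^k$ enter neither $\eta_n$ nor $\delta_n$, and appear in \eqref{eqn:xk} only through $(b_n^k)'$ and $(b_n^k)''$. The energy therefore controls $\mathbf{b}_n'$ and the odd-indexed $b_n^k$, but not the even-indexed ones.

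No sharper estimate can repair this. For any constant $\mathbf{c}\in\mathbb{R}^n$ supported on even indices, uniqueness for \eqref{eqn:xk} gives $\mathcal{T}(\mathbf{a}_n+\mathbf{c})=\mathcal{T}\mathbf{a}_n+\mathbf{c}$. Fixed points therefore come in unbounded affine families. Combined with Schaefer's alternative, the set $\{\mathbf{a}_n=\lambda\mathcal{T}\mathbf{a}_n\}$ is unbounded in $C^1(I;\mathbb{R}^n)$ whether or not a fixed point exists. For $\lambda<1$ you only get $(1-\lambda)b_n^k(0)=\lambda\int_0^T(b_n^k)'\,dt$, which degenerates as $\lambda\to1$.

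The remedy is to remove the redundancy before applying the theorem. One option is to use velocity unknowns $\beta^k$ for even $k$ and work in $C^1(I;\mathbb{R}^{n_{\mathrm{odd}}})\times C^0(I;\mathbb{R}^{n_{\mathrm{even}}})$, prescribing $\beta^k(0)$ as the input's even velocity at $T$. The other is to normalise $b_n^k(0)=0$ for even $k$ in the definition of $\mathcal{T}$. This suffices because periodicity of $\mathbf{u}_n$ only needs $(b_n^k)'(0)=(b_n^k)'(T)$.

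Continuity, compactness and your endpoint comparison are unaffected, since kinetic energy involves only velocities. The energy bound now controls the full norm:
\begin{itemize}
\item the velocities, through positive definiteness of $M(t)$. This is uniform in $\mathbf{a}_n$ because truncation plus $\mathcal{R}_\sigma$ bound $\delta(t)$ in $C^3(\omega)$ by a constant depending only on $\sigma$ and $\kappa$.
\item the odd positions, through $\|\nabla^2\tilde\eta_n\|_{L^2}^2\le K(\eta_n)$, which is a norm on the finite-dimensional span of the mean-zero $\hat Y_k$.
\end{itemize}
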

\begin{proof}
We shall prove that $\mathcal{T}$ satisfies the conditions of the  Leray-Schauder Theorem~\ref{thm:Schaeffer}. Note that $\mathcal{T}$ is well defined, recalling the discussion around \eqref{eqn:cons-gronwall}.

The compactness of $\mathcal{T}$ is due to the Arzela-Ascoli theorem and the fact that
\[
\mathcal{T}\left(X\right)\subset C^{2}\left(I\right)\hookrightarrow\hookrightarrow C^{1}\left(I\right).
\] 

That $\mathcal{T}$ is continuous follows from the well-posedness of \eqref{eqn:xk}. 
Indeed, let us consider a sequence $x_{k}\to x$ in $X$ and prove that $\mathcal{T}x_{k}\to\mathcal{T}x$.
The key observation is that if $\mathcal{T}x_{k_{n}}$ is an arbitrary convergent subsequence of $\mathcal{T}x_{k}$, then its limit equals $\mathcal{T}x$. This is because the limit would correspond to a solution of \eqref{eqn:xk} with the initial values and the domain corresponding to $x$. From the well-posedness of the system, this is exactly $\mathcal{T}x$.

To finish the argument, suppose there exists $\varepsilon_0>0$ and a subsequence of $\mathcal{T}x_{k}$ (not relabeled) for which 
\begin{equation}\label{eqn:contrad}
\left\Vert \mathcal{T}x_{k}-\mathcal{T}x\right\Vert _{X}\ge\varepsilon_{0}>0.
\end{equation}
But from \eqref{eqn:cons-gronwall} and \eqref{eqn:xk} it follows that $\mathcal{T}x_{k}$ is bounded in $C^{2}\left(I\right)$. Hence, it has a strongly convergent subsequence. Its limit, however, has to equal $\mathcal{T}x$, as proved before. This contradicts \eqref{eqn:contrad} and proves that $\mathcal{T}$ is continuous.

The most delicate part consists in proving that the set 
\[LS:=\left\{ x\in X:x=\lambda\mathcal{T}x\ \text{for some}\ \lambda\in\left[0,1\right]\right\} 
\] is uniformly bounded. 
To this end, note that in accordance with Proposition~\ref{prop: invariant-ball} it suffices to prove that for any $\mathbf{a}_{n}\in LS$ it holds that $E_{n}\left(T\right)\ge E_{n}\left(0\right)$. 
Since $\mathbf{a}_{n} \in LS$ it follows that 
\begin{equation}\label{eqn:cond-ls}
\left(\mathbf{a}_{n},\mathbf{b}_{n}\left(0\right),\mathbf{b}_{n}^{\prime}\left(0\right)\right)=\lambda\left(\mathbf{b}_{n},\mathbf{b}_{n}\left(T\right),\mathbf{b}_{n}^{\prime}\left(T\right)\right),\quad\text{for}\ \lambda\in\left[0,1\right].
\end{equation}
The case $\lambda=0$ is trivial, so we assume $0<\lambda \le 1$.
Then by  \eqref{eqn:cond-ls} we can compute
\begin{equation}\begin{aligned}E_{n}\left(T\right)-E_{n}\left(0\right)= & \left(\int_{\Omega_{\delta\left(T\right)}}\left|\mathbf{u}_{n}\left(T\right)\right|^{2}dx-\int_{\Omega_{\delta\left(0\right)}}\left|\mathbf{u}_{n}\left(0\right)\right|^{2}dx\right)+\\
 & \int_{\omega}\left(\partial_{t}\eta\left(T\right)\right)^{2}-\left(\partial_{t}\eta\left(0\right)\right)^{2}dA+\\
 & \left(K\left(\frac{1}{\lambda}\eta\left(0\right)\right)-K\left(\eta\left(0\right)\right)\right)\\
\ge & \left(\frac{1}{\lambda^{2}}-1\right)\left[\int_{\Omega_{\delta\left(0\right)}}\left|\mathbf{u}_{n}\left(0\right)\right|^{2}dx+\int_{\omega}\left(\partial_{t}\eta\left(0\right)\right)^{2}dA\right]+\\
 & \left(\frac{1}{\lambda^{2}}-1\right)K\left(\eta\left(0\right)\right)\\
\ge & 0.
\end{aligned}
\end{equation}

\begin{remark}
Note that comparing integrals on $\Omega_{\delta(0)}$ and $\Omega_{\delta(T)}$ is non-trivial when the domains differ. This is precisely why we require $\delta(0)=\delta(T)$, which is ensured by the operator $\mathcal{P}_{\varepsilon}$ above.
\end{remark}

This now places us in the context of Proposition~\ref{prop: invariant-ball}, so the set $LS$ is indeed uniformly bounded. With this, the conditions of the Leray-Schauder fixed-point theorem have been fulfilled and thus there exists a  fixed point of $\mathcal{T}$, that is an $\mathbf{a}_{n} \in X$ with 
\[\mathbf{a}_{n}=\mathbf{b}_{n},\quad\mathbf{b}_{n}\left(0\right)=\mathbf{b}_{n}\left(T\right),\quad\mathbf{b}_{n}^{\prime}\left(0\right)=\mathbf{b}_{n}^{\prime}\left(T\right).\]
\end{proof}
\begin{remark}
Please  observe that Proposition~\ref{prop: invariant-ball} ensures that for  $C\left(\mathbf{f},g\right)$ and $m^{2}$ sufficiently small  we obtain that
\[\left\Vert \delta_{n}\right\Vert _{L_{t,x}^{\infty}}=\left\Vert \eta_{n}\right\Vert _{L_{t,x}^{\infty}}\le \frac{1}{2}\kappa,\]
which means the truncation at level $\kappa/2$ becomes the identity.
\end{remark}
 In this way, we have obtained a time-periodic solution 
 \[
\left(\mathbf{u}_{n},\eta_{n}\right)=\left(\mathbf{u}_{n,\varepsilon,\sigma},\eta_{n,\varepsilon,\sigma}\right)
\] satisfying
\begin{equation}\label{eqn:new-xk}
\begin{aligned}\int_{\Omega_{\eta_{n}\left(t\right)}}\partial_{t}\mathbf{u}_{n}\cdot\mathbf{X}_{k}dx+\frac{1}{2}\int_{\omega}\partial_{t}\eta_{n}\partial_{t}\delta X_{k}dA+\int_{\Omega_{\eta_{n}\left(t\right)}}\nabla\mathbf{u}_{n}\cdot\nabla\mathbf{X}_{k}dx & +\\
\frac{1}{2}\int_{\Omega_{\eta_{n}\left(t\right)}}\left(\mathbf{u}_{n}\cdot\nabla\right)\mathbf{u}_{n}\cdot\mathbf{X}_{k}dx-\frac{1}{2}\int_{\Omega_{\eta_{n}\left(t\right)}}\left(\mathbf{u}_{n}\cdot\nabla\right)\mathbf{X}_{k}\cdot\mathbf{u}_{n}dx & +\\
\int_{\omega}\partial_{tt}\eta_{n}X_{k}dA+\left\langle K^{\prime}\left(\eta_{n}\right),X_{k}\right\rangle  & =\\
\int_{\Omega_{\eta_{n}\left(t\right)}}\mathbf{f}\cdot\mathbf{X}_{k}dx+\int_{\omega}gX_{k}dA\quad\forall t\in I,\quad k=\overline{1,n}.
\end{aligned}
\end{equation}

\subsection{Limiting procedure and proof of Theorem~\ref{thm:main}}\label{ssect:chap02-limit-compactness}
At this point we have obtained discrete solutions for each $n \ge 1$ and $\varepsilon, \sigma>0$, namely $\left(\mathbf{u}_{n,\varepsilon,\sigma},\eta_{n,\varepsilon,\sigma}\right)$.
In order not to overload the notation, we shall only mention the dependence on $n$ for the moment. We first let $n\to\infty$ in \eqref{eqn:new-xk}, and in a second step we consider the limits $\varepsilon, \sigma \to 0$.

\subsubsection{Limit as \texorpdfstring{$n\to\infty$}{n to infinity}}

After the fixed-point procedure we integrate  the relation \eqref{eqn:new-xk} from $0$ to $t$ to get:
\begin{equation}\label{eqn:chap02-discrete-integrated-t}
\begin{aligned}\int_{\Omega_{\eta_{n}\left(t\right)}}\mathbf{u}_{n}\left(t\right)\cdot\mathbf{q}\left(t\right)dx-\int_{\Omega_{\eta_{n}\left(0\right)}}\mathbf{u}_{n}\left(0\right)\cdot\mathbf{q}\left(0\right)dx-\int_{0}^{t}\int_{\Omega_{\eta_{n}\left(s\right)}}\mathbf{u}_{n}\cdot\partial_{t}\mathbf{q}dxds & +\\
\int_{0}^{t}\int_{\Omega_{\eta_{n}\left(s\right)}}\nabla\mathbf{u}_{n}:\nabla\mathbf{q}dxds-\int_{0}^{t}\int_{\Omega_{\eta_{n}\left(s\right)}}\left(\mathbf{u}_{n}\cdot\nabla\right)\mathbf{q}\cdot\mathbf{u}_{n}dxds & +\\
\int_{\omega}\partial_{t}\eta_{n}\left(t\right)b\left(t\right)-\int_{\omega}\partial_{t}\eta_{n}\left(0\right)b\left(0\right)-\int_{0}^{t}\int_{\omega}\partial_{t}\eta_{n}\partial_{t}bdAds & +\\
\int_{0}^{t}\left\langle K^{\prime}\left(\eta_{n}\right),b\right\rangle ds & =\\
\int_{0}^{t}\int_{\Omega_{\eta_{n}\left(s\right)}}\mathbf{f}\cdot\mathbf{q}dxds+\int_{0}^{t}\int_{\omega}gbdAds
\end{aligned}
\end{equation} 
for  all $t\in I$ and all $\left(\mathbf{q},b\right)\in\text{span}\left\{ \left(\varphi\left(t\right)\mathbf{X}_{k},\varphi\left(t\right)X_{k}\right):k=\overline{1,n},\ \varphi\in C_{\text{per}}^{1}\left(I\right)\right\}$.

Setting $t=T$ and using the periodicity of $(\mathbf{u}_n, \eta_n)$ we obtain that
\begin{equation}\label{eqn:chap02-discrete-integrated}
\begin{aligned}-\int_{I}\int_{\Omega_{\eta_{n}\left(s\right)}}\mathbf{u}_{n}\cdot\partial_{t}\mathbf{q}dxds+\int_{I}\int_{\Omega_{\eta_{n}\left(s\right)}}\nabla\mathbf{u}_{n}:\nabla\mathbf{q}dxds & +\\
-\int_{I}\int_{\Omega_{\eta_{n}\left(s\right)}}\left(\mathbf{u}_{n}\cdot\nabla\right)\mathbf{q}\cdot\mathbf{u}_{n}dxds-\int_{I}\int_{\omega}\partial_{t}\eta_{n}\partial_{t}bdAds & +\\
\int_{I}\left\langle K^{\prime}\left(\eta_{n}\right),b\right\rangle ds & =\\
\int_{I}\int_{\Omega_{\eta_{n}\left(s\right)}}\mathbf{f}\cdot\mathbf{q}dxds+\int_{I}\int_{\omega}gbdAds
\end{aligned}
\end{equation}
for all $\left(\mathbf{q},b\right)\in V_{T,\text{per}}^{\eta_{n}}$.

In order to prove Theorem~\ref{thm:main} we need to let $n\to \infty$ in \eqref{eqn:chap02-discrete-integrated}.

Due to the energy estimate~\eqref{eqn:chap02-bound-E-n} and of \eqref{eqn:consequence-claim} there exists a weak limit $(\mathbf{u},\eta)\in V_{S,\text{per}}$ for which we have 
\begin{equation}
    \begin{aligned}\eta_{n}\to\eta & \text{weakly in }L^{\infty}\left(I;H^{2}\left(\omega\right)\right)\cap W^{1,\infty}\left(I;L^{2}\left(\omega\right)\right)\\
\mathbf{u}_{n}\chi_{\Omega_{\eta_{n}}}\to\mathbf{u}\chi_{\Omega_{\eta}} & \text{weakly in }L^{\infty}\left(I;L^{2}\left(\mathbb{R}^{3}\right)\right)\\
\nabla\mathbf{u}_{n}\chi_{\Omega_{\eta_{n}}}\to\nabla\mathbf{u}\chi_{\Omega_{\eta}} & \text{weakly in }L^{2}\left(I;L^{2}\left(\mathbb{R}^{3}\right)\right)
\end{aligned}
\end{equation}

We are left to establish the convergence of nonlinear terms, that is 
\begin{equation}\label{eqn:strong-conv-eta-n}
\left\langle K^{\prime}\left(\eta_{n}\right),\psi\right\rangle \to\left\langle K^{\prime}\left(\eta\right),\psi\right\rangle \quad\forall\psi\in C^{\infty}\left(I\times\omega\right)
\end{equation}
and the \textbf{strong convergence}
\begin{equation}\label{eqn:strong-conv-u-n}
\mathbf{u}_{n}\to\mathbf{u}\quad\text{in}\ L^{2}\left(I;L^{2}\left(\mathbb{R}^{3}\right)\right).
\end{equation}

To prove \eqref{eqn:strong-conv-eta-n} we employ the Aubin-Lions Lemma. Indeed, observe that we have the embeddings
\[
H^{2}\left(\omega\right)\hookrightarrow\hookrightarrow W^{1,3}\left(\omega\right)\hookrightarrow W^{1/2,2}\left(\omega\right)
\]
and therefore the compact embedding
\[
W:=\left\{ \xi\in L^{\infty}\left(I;H^{2}\left(\omega\right)\right),\ \partial_{t}\xi\in L^{2}\left(I;W^{1/2,2}\left(\omega\right)\right)\right\} \hookrightarrow\hookrightarrow L^{\infty}\left(I;W^{1,3}\left(\omega\right)\right).
\]
Due to the energy estimate~\eqref{eqn:chap02-bound-E-n} it follows that the sequence $(\eta_n)_{n\ge 1}$ is bounded in $W$ and therefore has a subsequence (not relabeled) for which 
\begin{equation}
\eta_{n}\to\eta\ \text{in}\ L_{t}^{3}W_{x}^{1,3},
\end{equation} 
which is enough to conclude \eqref{eqn:strong-conv-eta-n}.

On the other hand, the proof of \eqref{eqn:strong-conv-u-n} is much more involved and based on the additional convergence 
\begin{equation}\label{eqn:convergence-L2-norms}
\begin{aligned}\int_{I}\int_{\Omega_{\eta_{n}\left(t\right)}}\left|\mathbf{u}_{n}\right|^{2}dxdt+\int_{I}\int_{\omega}\left|\partial_{t}\eta_{n}\right|^{2}dAdt & \xrightarrow{n\to\infty}\\
\int_{I}\int_{\Omega_{\eta \left(t\right)}}\left|\mathbf{u}\right|^{2}dxdt+\int_{I}\int_{\omega}\left|\partial_{t}\eta\right|^{2}dAdt.
\end{aligned}
\end{equation}

Let us extend $\mathbf{u}_n$ and $\mathbf{u}$ by $\mathbf{0}$ to the whole $\mathbb{R}^{3}$.
Due to \eqref{eqn:chap02-bound-E-n} it follows in particular that 
\begin{equation}
\sup_{n}\left(\left\Vert \mathbf{u}_{n}\right\Vert _{L_{t}^{\infty}L_{x}^{2}\cap L_{t}^{2}H_{x}^{1}}+\left\Vert \partial_{t}\eta_{n}\right\Vert _{L_{t}^{\infty}L_{x}^{2}}\right)\le c<\infty.
\end{equation}
An eventual diagonal argument ensures the existence of a dense, countable subset $I_0 \subset I$ with the property
\begin{equation}
\left(\mathbf{u}_{n}\left(t,\cdot\right),\partial_{t}\eta_{n}\left(t,\cdot\right)\right)\rightharpoonup\left(\mathbf{u}\left(t,\cdot\right),\partial_{t}\eta\left(t,\cdot\right)\right)\ \text{weakly in }L^{2}\left(\mathbb{R}^{3}\right)\times L^{2}\left(\mathbb{\mathbb{R}}^{2}\right)\quad\forall t\in I_{0}.
\end{equation}

For a generic function $b\in H^{2}(\omega)$ with $\int _{\omega} bdA=0$ given by 
\[
b=\sum_{k=1}^{\infty}\left(b,X_{k}\right)_{H^{2}\left(\omega\right)}X_{k},
\]
we truncate it at level $n$ by
\[
b_n=\sum_{k=1}^{n}\left(b,X_{k}\right)_{H^{2}\left(\omega\right)}X_{k}.
\]
Let us consider its truncated extension to $\Omega_{\eta_n}$, denoted by
\[
\mathcal{F}_{\eta_{n}}b=\sum_{k=1}^{n}\left(b,X_{k}\right)_{H^{2}\left(\omega\right)}\mathbf{X}_{k},
\]
and then
\[
\mathcal{F}_{\eta}b=\sum_{k=1}^{\infty}\left(b,X_{k}\right)_{H^{2}\left(\omega\right)}\mathbf{X}_{k}.
\]
Please note that the latter convergence is due to the fact that $\mathcal{F}_{\eta_n}$ and $\mathcal{F}_{\eta}$ are continuous from $W_{x}^{k,p}$ to itself (with a continuity constant depending on $\left\Vert \nabla\eta_{n}\right\Vert _{L_{t,x}^{\infty}}$ and therefore on $\sigma$); see Lemma~\ref{lm:Piola}.

We introduce the following quantities:
\begin{equation}
\begin{aligned}c_{b,n}\left(t\right):= & \int_{\Omega_{\eta_{n}\left(t\right)}}\mathbf{u}_{n}\cdot\mathcal{F}_{\eta_{n}}bdx+\int_{\omega}\partial_{t}\eta_{n}\cdot b_{n}dA\\
c_{b}\left(t\right):= & \int_{\Omega_{\eta\left(t\right)}}\mathbf{u}\cdot\mathcal{F}_{\eta}bdx+\int_{\omega}\partial_{t}\eta\cdot bdA\\
g_{n}\left(t\right):= & \sup_{\left\Vert b\right\Vert _{H^{2}\left(\omega\right)}\le1}\left|c_{b,n}\left(t\right)-c_{b}\left(t\right)\right|.
\end{aligned}
\end{equation}

We claim that
\begin{equation}\label{eqn:chap02-conv-gn-0}
\int_{I}g_{n}\left(t\right)dt\xrightarrow{n\to\infty}0.
\end{equation}

Indeed, we prove that for some $\lambda \in (0, 1)$ we have
\begin{equation}\label{eqn:chap02-claim-Arzela}
\sup_{n\ge1}\left\Vert c_{b,n}\right\Vert _{C^{0,\lambda}\left(I\right)}<\infty.
\end{equation}
First,
\begin{equation}
\left\Vert c_{b,n}\right\Vert _{L_{t}^{\infty}}\lesssim\left(\left\Vert \mathbf{u}_{n}\right\Vert _{L_{t}^{\infty}L_{x}^{2}}+\left\Vert \partial_{t}\eta{}_{n}\right\Vert _{L_{t}^{\infty}L_{x}^{2}}\right)\left\Vert b\right\Vert _{H_{x}^{2}}\lesssim1,
\end{equation}
where we have used \eqref{eqn:chap02-bound-E-n}  and the continuity of $\mathcal{F}_{\eta_n}$.

Then, using \eqref{eqn:chap02-discrete-integrated-t} we obtain that 
\[c_{b,n}\left(t\right)-c_{b,n}\left(s\right)=\int_{s}^{t}\int_{\Omega_{\eta_{n}\left(\tau\right)}}\left(\mathbf{u}_{n}\cdot\nabla\right)\mathcal{F}_{\eta_{n}}b\cdot\mathbf{u}_{n}dxd\tau+\ldots.
\]
We present the estimates only for the most difficult term, namely the convective term, for which we have 
\begin{equation}
\begin{aligned}\left|\int_{s}^{t}\int_{\Omega_{\eta_{n}\left(\tau\right)}}\left(\mathbf{u}_{n}\cdot\nabla\right)\mathcal{F}_{\eta_{n}}b\cdot\mathbf{u}_{n}dxd\tau\right|\le & \left\Vert \mathbf{u}_{n}\right\Vert _{L_{t}^{\infty}L_{x}^{2}}\left\Vert \mathbf{u}_{n}\right\Vert _{L_{t}^{2}L_{x}^{6}}\left\Vert \nabla\mathcal{F}_{\eta_{n}}b\right\Vert _{L_{t}^{\infty}L_{x}^{6}}\left|t-s\right|^{1/2}\\
\lesssim & \left\Vert \mathbf{u}_{n}\right\Vert _{L_{t}^{\infty}L_{x}^{2}\cap L_{t}^{2}H_{x}^{1}}^{2}\left\Vert \mathcal{F}_{\eta_{n}}b\right\Vert _{L_{t}^{\infty}H_{x}^{2}}\left|t-s\right|^{1/2}\\
\lesssim & \left|t-s\right|^{1/2}.
\end{aligned}
\end{equation}
Similar estimates hold for the remaining terms and thus we can use  \eqref{eqn:chap02-claim-Arzela} to infer that, for a fixed $b$ with $\left\Vert b\right\Vert _{H^{2}}\le1$ it holds
\begin{equation}\label{eqn:c-n-strong-conv}
\left\Vert c_{b,n}-c_{b}\right\Vert _{C_{t}^{0}}\xrightarrow{n\to\infty}0.
\end{equation}

The next step consists in proving that 
\begin{equation}
g_{n}\xrightarrow{n\to\infty}0\ \text{in}\ C^{0}\left(I\right).
\end{equation}
To this end, note that \eqref{eqn:chap02-bound-E-n} ensures the following convergence \emph{strongly} in $H^{-1}\left(\mathbb{R}^{3}\right)\times H^{-1}\left(\mathbb{R}^{2}\right)$:
\begin{equation}
\left(\mathbf{u}_{n}\chi_{\Omega_{\eta_{n}}}\left(t,\cdot\right),\partial_{t}\eta_{n}\chi_{\omega}\left(t,\cdot\right)\right)\xrightarrow{n\rightarrow\infty}\left(\mathbf{u}\chi_{\Omega_{\eta}}\left(t,\cdot\right),\partial_{t}\eta\left(t,\cdot\right)\chi_{\omega}\right),\ \forall t\in I_{0}.
\end{equation}

Please notice that uniformly for all $\left\Vert b\right\Vert _{H^{2}}\le1$ it holds that
\begin{equation}
\begin{aligned}\left|c_{b,n}\left(t\right)-c_{b}\left(t\right)\right|\le & \left|\int_{\mathbb{R}^{3}}\left(\mathbf{u}_{n}\chi_{\Omega_{\eta_{n}}}\left(t,\cdot\right)-\mathbf{u}\chi_{\Omega_{\eta}}\left(t,\cdot\right)\right)\mathcal{F}_{\eta_{n\left(t\right)}}bdx\right|+\\
 & \left|\int_{\mathbb{R}^{3}}\mathbf{u}\chi_{\Omega_{\eta}}\left(t,\cdot\right)\left(\mathcal{F}_{\eta_{n\left(t\right)}}b-\mathcal{F}_{\eta\left(t\right)}b\right)dx\right|+\\
 & \left|\int_{\omega}\left(\partial_{t}\eta_{n}-\partial_{t}\eta\right)b_{n}dA\right|+\left|\int_{\omega}\partial_{t}\eta\left(b_{n}-b\right)dA\right|\\
\le & \left\Vert \mathbf{u}_{n}\chi_{\Omega_{\eta_{n}}}\left(t,\cdot\right)-\mathbf{u}\chi_{\Omega_{\eta}}\left(t,\cdot\right)\right\Vert _{H_{x}^{-1}}\left\Vert \mathcal{F}_{\eta_{n\left(t\right)}}b\right\Vert _{H_{x}^{1}}+\\
 & \left\Vert \mathbf{u}\chi_{\Omega_{\eta}}\left(t,\cdot\right)\right\Vert _{H_{x}^{-1}}\left\Vert \mathcal{F}_{\eta_{n\left(t\right)}}b-\mathcal{F}_{\eta\left(t\right)}b\right\Vert _{H_{x}^{1}}+\\
 & \left\Vert \partial_{t}\eta_{n}-\partial_{t}\eta\right\Vert _{H_{x}^{-1}}\left\Vert b_{n}\right\Vert _{H_{x}^{1}}+\left\Vert \partial_{t}\eta\right\Vert _{H_{x}^{-1}}\left\Vert b_{n}-b\right\Vert _{H_{x}^{1}}
\end{aligned}
\end{equation}
and therefore 
\begin{equation}
g_{n}\left(t\right)\xrightarrow{n\to\infty}0\quad\forall t\in I_{0}.
\end{equation}
Finally, for an arbitrary $t\in I$ there is $t_ \varepsilon \in I_0$ with $\left|t-t_{\varepsilon}\right|\le\varepsilon$ so 
\begin{equation}
\begin{aligned}\limsup_{n\to\infty}\left|g_{n}\left(t\right)\right|\le & \limsup_{n\to\infty}\left|g_{n}\left(t\right)-g_{n}\left(t_{\varepsilon}\right)\right|+\limsup_{n\to\infty}\left|g\left(t_{\varepsilon}\right)\right|\\
\lesssim & \varepsilon^{\lambda}.
\end{aligned}
\end{equation}
Since $\varepsilon>0$ was arbitrary this proves that $g_{n}(t)\to 0$ for all $t\in I$.

In order to prove \eqref{eqn:convergence-L2-norms} we establish the two convergences \eqref{eqn:compact1} and \eqref{eqn:compact2} below.

\noindent\textit{Proof of \eqref{eqn:compact1}.}
Since $g_n \to 0$ in $C^0(I)$, the Dominated Convergence Theorem gives
\begin{equation}\label{eqn:gn-conv-zero}
\int_{I}g_{n}\left(t\right)dt\xrightarrow{n\to\infty}0.
\end{equation}
We now choose the test function $b := \left(\partial_{t}\eta_{n}\right)_{\rho} = \partial_{t}\eta_{n}\ast m_{\rho}$, where $m_{\rho}\left(x\right)=\rho^{-2}m\left(\rho^{-1}x\right)$ is a standard mollifier in $\mathbb{R}^{2}$ (functions are extended by zero to the whole space).\footnote{Only mean-value free functions can be extended to divergence-free ones on the fluid domain. For simplicity we neglect the corrector term $\int_{\omega}\left(\partial_{t}\eta_{n}\right)_{\rho}dA$, which vanishes as $\rho\to 0$.}
With this choice, \eqref{eqn:gn-conv-zero} yields that for all $\epsilon>0$ there is $n(\epsilon)>0$ such that for all $n>n(\epsilon)$:
\begin{equation}
\begin{aligned}\int_{I}\left|c_{b,n}\left(t\right)-c_{b}\left(t\right)\right|dt & \le\epsilon\left\Vert b\right\Vert _{H^{2}}\\
 & \le\epsilon\rho^{-4}C\left(m\right)\left\Vert \partial_{t}\eta_{n}\right\Vert _{L_{t}^{\infty}L_{x}^{2}}\\
 & \lesssim\epsilon\rho^{-4}\lesssim\epsilon^{1/2}
\end{aligned}
\end{equation}
for $\rho=\epsilon^{1/8}$. This means
\begin{equation}
\begin{aligned}\int_{I}\int_{\Omega_{\eta_{n}\left(t\right)}}\mathbf{u}_{n}\cdot\mathcal{F}_{\eta_{n}}\left(\partial_{t}\eta_{n}\right)_{\rho}dxdt+\int_{I}\int_{\omega}\partial_{t}\eta_{n}\left(\partial_{t}\eta_{n}\right)_{\rho}dAdt & \xrightarrow{n\to\infty}\\
\int_{I}\int_{\Omega_{\eta\left(t\right)}}\mathbf{u}\cdot\mathcal{F}_{\eta}\left(\partial_{t}\eta\right)_{\rho}dxdt+\int_{I}\int_{\omega}\partial_{t}\eta\left(\partial_{t}\eta\right)_{\rho}dAdt.
\end{aligned}
\end{equation}
Using the mollification convergence $\left(\partial_{t}\eta_{n}\right)_{\rho}\xrightarrow{\rho\to0}\partial_{t}\eta_{n}$ in $L_{t}^{\infty}L_{x}^{2}$ and passing $\rho\to 0$, we obtain
\begin{equation}\label{eqn:compact1}
\begin{aligned}\int_{I}\int_{\Omega_{\eta_{n}\left(t\right)}}\mathbf{u}_{n}\cdot\mathcal{F}_{\eta_{n}}\left(\partial_{t}\eta_{n}\right)dxdt+\int_{I}\int_{\omega}\left(\partial_{t}\eta_{n}\right)^{2}dAdt & \xrightarrow{n\to\infty}\\
\int_{I}\int_{\Omega_{\eta\left(t\right)}}\mathbf{u}\cdot\mathcal{F}_{\eta}\left(\partial_{t}\eta\right)dxdt+\int_{I}\int_{\omega}\left(\partial_{t}\eta\right)^{2}dAdt.
\end{aligned}
\end{equation}

\noindent\textit{Proof of \eqref{eqn:compact2}.}
We need to show that
\begin{equation}\label{eqn:compact2}
\begin{aligned}\int_{I}\int_{\Omega_{\eta_{n}\left(t\right)}}\mathbf{u}_{n}\cdot\left(\mathbf{u}_{n}-\mathcal{F}_{\eta_{n}}\left(\partial_{t}\eta_{n}\right)\right)dxdt & \xrightarrow{n\to\infty}\\
\int_{I}\int_{\Omega_{\eta\left(t\right)}}\mathbf{u}\cdot\left(\mathbf{u}-\mathcal{F}_{\eta}\left(\partial_{t}\eta\right)\right)dxdt.
\end{aligned}
\end{equation}
The key ingredient is the following uniform convergence: for all $\tilde{\sigma}>0$ and all smooth approximations
\[
\delta_{\tilde{\sigma}}\in C_{t,x}^{3}\left(I\times\omega\right),\quad\left\Vert \delta_{\tilde{\sigma}}-\eta\right\Vert _{L_{t,x}^{\infty}}<\tilde{\sigma},\quad\delta_{\tilde{\sigma}}<\eta\ \text{in}\ I\times\omega,
\]
it holds that
\begin{equation}\label{eqn:uniform-conv-h1-piola}
\sup_{\left\Vert \phi\right\Vert _{H_{0,\text{div}}^{1}\left(\Omega\right)}\le1}\left|\int_{\Omega_{\eta_{n}\left(t\right)}}\mathbf{u}_{n}\cdot\mathcal{P}_{n}\mathcal{J}_{\delta_{\tilde{\sigma}}\left(t\right)}\phi\, dx-\int_{\Omega_{\eta\left(t\right)}}\mathbf{u}\cdot\mathcal{J}_{\delta_{\tilde{\sigma}}\left(t\right)}\phi\, dx\right|\xrightarrow{n\to\infty}0
\end{equation}
for all $t\in I$, where the Galerkin projection $\mathcal{P}_n$ acts on $\mathcal{J}_{\delta_{\tilde\sigma}}^{-1}\mathbf{f}:\Omega\mapsto\mathbb{R}^3$ via
\begin{equation*}
\mathcal{P}_{n}\left(\mathcal{J}_{\delta_{\tilde{\sigma}}}^{-1}\mathbf{f}\right):=\sum_{\substack{k\le n \\ k\ \text{even}}}\left(\mathcal{J}_{\delta_{\tilde{\sigma}}}^{-1}\mathbf{f},X_{k}\right)_{H^{1}\left(\Omega\right)}\mathbf{X}_{k},
\end{equation*}
so that for $\mathbf{f}=\mathcal{J}_{\delta_{\tilde{\sigma}}}\phi$ we have \[\mathcal{P}_{n}\mathcal{J}_{\delta_{\tilde{\sigma}}}\phi=\sum_{k\le n,\,k\,\text{even}}\left(\phi,X_{k}\right)_{H^{1}}\mathbf{X}_{k}.
\]
By the continuity of the Piola mapping (Lemma~\ref{lm:Piola}):
\[
\mathcal{P}_{n}\mathbf{f}\in H_{0,\text{div}}^{1}\left(\Omega_{\eta_{n}\left(t\right)}\right),\quad\left\Vert \mathcal{P}_{n}\mathbf{f}\right\Vert _{H^{1}}\lesssim\left\Vert \mathbf{f}\right\Vert _{H^{1}},
\]
with continuity constant depending on $\left\Vert \nabla\eta_{n}\right\Vert _{L^{\infty}}$.

To prove \eqref{eqn:uniform-conv-h1-piola} we use the discrete test function $\left(\mathcal{P}_{n}\mathcal{J}_{\delta_{\tilde\sigma}\left(t\right)}\phi,0\right)$ in \eqref{eqn:chap02-discrete-integrated-t} and estimate the H\"{o}lder modulus of continuity of
\[
t\mapsto\int_{\Omega_{\eta_{n}\left(t\right)}}\mathbf{u}_{n}\cdot\mathcal{P}_{n}\mathcal{J}_{\delta_{\tilde\sigma}\left(t\right)}\phi\, dx.
\]
For the convective term, using $\mathbf{u}_{n}\in L_{t}^{\infty}L_{x}^{2}\cap L_{t}^{2}L_{x}^{6}\hookrightarrow L_{t}^{8/3}L_{x}^{4}$:
\begin{equation}
\left|\int_{s}^{t}\int_{\Omega_{\eta_{n}\left(\tau\right)}}\left(\mathbf{u}_{n}\cdot\nabla\right)\mathcal{P}_{n}\mathcal{J}_{\delta_{\tilde\sigma}\left(\tau\right)}\phi\cdot\mathbf{u}_{n}dxd\tau\right|\lesssim\left\Vert \phi\right\Vert _{W_{x}^{1,2}}\left|t-s\right|^{1/4}.
\end{equation}
For the time-derivative term, writing $\partial_{t}\mathcal{P}_{n}\mathcal{J}_{\eta_{n}}\phi_{n}=T_{1}+T_{2}$ where
\begin{equation}
\begin{aligned}\partial_{t}\mathcal{J}_{\eta_{n}}\phi_{n}= & \frac{\partial_{t}\nabla\psi_{\eta_{n}}\phi_{n}\left(1+\eta_{n}\right)-\nabla\psi_{\eta_{n}}\phi_{n}\partial_{t}\eta_{n}}{\left(1+\eta_{n}\right)^{2}}\circ\psi_{\eta_{n}}^{-1}\\
 & +\nabla\left[\frac{\nabla\psi_{\eta_{n}}\phi_{n}}{1+\eta_{n}}\right]\circ\psi_{\eta_{n}}^{-1}\cdot\partial_{t}\psi_{\eta_{n}}^{-1}=:T_{1}+T_{2},
\end{aligned}
\end{equation}
and using the fact that composition with $\psi_{\eta_{n}}^{-1}$ is continuous from $W^{1,p}$ to itself, we set $\overline{\mathbf{u}}_{n}:=\mathbf{u}_{n}\circ\psi_{\eta_n}$ and estimate via integration by parts (the boundary term vanishes since $\phi\in H_{0,\text{div}}^{1}\left(\Omega\right)$):
\begin{equation}
\begin{aligned}\left|\int_{s}^{t}\int_{\Omega}\overline{\mathbf{u}}_{n}\cdot\partial_{t}\nabla\eta_{n}\phi_{n}dxd\tau\right| & \le\left\Vert \nabla\overline{\mathbf{u}}_{n}\right\Vert _{L_{t}^{2}L_{x}^{2}}\left\Vert \partial_{t}\eta_{n}\right\Vert _{L_{t}^{3}L_{x}^{3}}\left\Vert \phi_{n}\right\Vert _{L_{x}^{6}}\left|t-s\right|^{1/6}\lesssim\left|t-s\right|^{1/6},\\
\left|\int_{s}^{t}\int_{\Omega}\overline{\mathbf{u}}_{n}\cdot\nabla\eta_{n}\phi_{n}\partial_{t}\eta_{n}dxd\tau\right| & \le\left\Vert \overline{\mathbf{u}}_{n}\right\Vert _{L_{t}^{2}L_{x}^{6}}\left\Vert \nabla\eta_{n}\right\Vert _{L_{t}^{\infty}L_{x}^{6}}\left\Vert \phi_{n}\right\Vert _{L_{x}^{6}}\left\Vert \partial_{t}\eta_{n}\right\Vert _{L_{t,x}^{3}}\left|t-s\right|^{1/6}\lesssim\left|t-s\right|^{1/6}.
\end{aligned}
\end{equation}
The estimate for $T_2$ is similar; the term involving $\nabla^{2}\eta_n$ is bounded using $\eta_n\in H_x^3$ (which holds at the regularization level $\sigma>0$):
\begin{equation}
\int_{s}^{t}\int_{\Omega}\left|\overline{\mathbf{u}}_{n}\right|\left|\nabla^{2}\eta_{n}\right|\left|\phi_{n}\right|\left|\partial_{t}\eta_{n}\right|dxd\tau\lesssim_{\sigma}\left|t-s\right|^{1/2}.
\end{equation}
Combining all terms gives a H\"{o}lder modulus of continuity uniform in $n$, and the justification of \eqref{eqn:uniform-conv-h1-piola} then follows by the same argument as for \eqref{eqn:gn-conv-zero}.

Now we use Lemma~\ref{lemma:approx-dual-appendix} with $\psi_{n}:=\mathbf{u}_{n}-\mathcal{F}_{\eta_{n}}\left(\partial_{t}\eta_{n}\right)$: for each $\epsilon>0$ there exist $\sigma_{\epsilon}>0$ and a divergence-free function $\psi_{n,\epsilon}$ satisfying
\[
\psi_{n,\epsilon}\in L^{2}\left(\Omega_{\eta_{n}}\right),\quad\left\Vert \psi_{n,\epsilon}\right\Vert _{L^{2}}\le1,\quad\operatorname{supp}\psi_{n,\epsilon}\subset\Omega_{\eta_{n}-3\sigma_{\epsilon}},\quad\left\Vert \psi_{n}-\psi_{n,\epsilon}\right\Vert _{\left(H^{s}\right)^{\prime}}<\epsilon
\]
for all $s>0$. Its mollification $\left(\psi_{n,\epsilon}\right)_{\rho}$ satisfies $\operatorname{supp}\left(\psi_{n,\epsilon}\right)_{\rho}\subset\Omega_{\eta-\sigma_{\epsilon}}\subset\Omega_{\delta_{\tilde{\sigma}}}$ for sufficiently large $n>n(\epsilon)$ and small $\rho<\rho(\epsilon)$. Hence we may write
\[
\left(\psi_{n,\epsilon}\right)_{\rho}=\mathcal{J}_{\delta_{\tilde{\sigma}}}\phi,\quad\phi\in H_{0,\text{div}}^{1}(\Omega)
\]
for $\tilde{\sigma}<\tilde{\sigma}_{\epsilon}$ small enough. Applying \eqref{eqn:uniform-conv-h1-piola} we obtain that for any $\epsilon>0$ there exists $h_{\epsilon}>0$ such that for all $h\in(0,h_{\epsilon})$ there is $n(h)\ge 1$ with
\begin{equation}\label{eqn:conv-piolatilde-unif}
\begin{aligned}\left|\int_{\Omega_{\eta_{n}\left(t\right)}}\mathbf{u}_{n}\cdot\mathcal{P}_{n}\left(\psi_{n,\epsilon}\right)_{\rho}dx-\int_{\Omega_{\eta\left(t\right)}}\mathbf{u}\cdot\left(\psi_{n,\epsilon}\right)_{\rho}dx\right| & \lesssim h\left\Vert \mathcal{J}_{\delta_{\tilde\sigma}}^{-1}\left(\psi_{n,\epsilon}\right)_{\rho}\right\Vert _{H^{1}}\\
 & \lesssim\frac{h}{\rho^{4}}c\left(\epsilon\right)\left\Vert \left(\psi_{n,\epsilon}\right)_{\rho}\right\Vert _{L^{2}}\lesssim h^{1/2}
\end{aligned}
\end{equation}
for all $n>n(h)$, where we set $\rho=h^{1/8}$ and use that the Piola mapping is an $H^{1}$ isomorphism (Lemma~\ref{lm:Piola}). Thus
\[
\limsup_{n\to\infty}\left|\int_{\Omega_{\eta_{n}\left(t\right)}}\mathbf{u}_{n}\cdot\mathcal{P}_{n}\left(\psi_{n,\epsilon}\right)_{\rho}dx-\int_{\Omega_{\eta\left(t\right)}}\mathbf{u}\cdot\left(\psi_{n,\epsilon}\right)_{\rho}dx\right|=0\quad\forall\epsilon>0,\,\rho\in\left(0,\rho_{\epsilon}\right).
\]
Using the convergences $\left(\psi_{n,\epsilon}\right)_{\rho}\xrightarrow{\rho\to0}\psi_{n,\epsilon}$ in $L_{x}^{2}$ and $\psi_{n,\epsilon}\xrightarrow{\epsilon\to0}\psi_{n}$ in $\left(H_{x}^{s}\right)^{\prime}$, we conclude
\begin{equation}
\int_{I}\int_{\Omega_{\eta_{n}\left(t\right)}}\mathbf{u}_{n}\cdot\left(\mathbf{u}_{n}-\mathcal{F}_{\eta_{n}}\left(\partial_{t}\eta_{n}\right)\right)dxdt\xrightarrow{n\to\infty}\int_{I}\int_{\Omega_{\eta\left(t\right)}}\mathbf{u}\cdot\left(\mathbf{u}-\mathcal{F}_{\eta}\left(\partial_{t}\eta\right)\right)dxdt,
\end{equation}
which is \eqref{eqn:compact2}.

Finally, adding \eqref{eqn:compact1} and \eqref{eqn:compact2} yields
\begin{equation}
\begin{aligned}\int_{I}\int_{\Omega_{\eta_{n}\left(t\right)}}\left|\mathbf{u}_{n}\right|^{2}dxdt+\int_{I}\int_{\omega}\left(\partial_{t}\eta_{n}\right)^{2}dAdt & \xrightarrow{n\to\infty}\\
\int_{I}\int_{\Omega_{\eta\left(t\right)}}\left|\mathbf{u}\right|^{2}dxdt+\int_{I}\int_{\omega}\left(\partial_{t}\eta\right)^{2}dAdt,
\end{aligned}
\end{equation}
which proves the desired compactness \eqref{eqn:convergence-L2-norms}.

Now we can pass to the limit in \eqref{eqn:chap02-discrete-integrated} to obtain that 
\begin{equation}
   \begin{aligned}\int_{I}\int_{\Omega_{\eta\left(s\right)}}-\mathbf{u}\cdot\partial_{t}\mathbf{q}dxds+\nabla\mathbf{u}:\nabla\mathbf{q}-\left(\mathbf{u}\cdot\nabla\right)\mathbf{q}\cdot\mathbf{u}dxds & +\\
\int_{I}\int_{\omega}-\partial_{t}\eta\partial_{t}b+\int_{I}\left\langle K^{\prime}\left(\eta\right),b\right\rangle ds & =\\
\int_{I}\int_{\Omega_{\eta\left(s\right)}}\mathbf{f}\cdot\mathbf{q}dxds+\int_{I}\int_{\omega}gbdAds
\end{aligned}
\end{equation}
for all $\left(\mathbf{q},\xi\right)\in V_{T,\text{per}}^{\eta}$.

\begin{remark}
    It is standard to observe that \eqref{eqn:convergence-L2-norms} yields the \emph{strong} convergence $\left(\mathbf{u}_{n},\eta_{n}\right)\to\left(\mathbf{u},\eta\right)$. This can then easily be  improved   to $\mathbf{u}_{n}\to\mathbf{u}$ in $L^2_{t} L^{4}_{x}$ by interpolating between $L^{2}_{t}L^{2}_{x}$ and $L^{2}_{t}L^{6}_{x}$.
\end{remark}
\subsubsection{Limit as \texorpdfstring{$\varepsilon \to 0$, $\sigma \to 0$}{epsilon, sigma to 0}}\label{subsubsection-compactness2-plate}
Finally, we need to prove that the $L^2$ strong convergence as before is valid also as $\varepsilon\to 0$. To this end, the proof follows the same strategy as above.
Indeed, we can show with analogous arguments as in the case $n\to\infty$ that 
\begin{equation}
\int_{\Omega_{\eta_{\varepsilon}}}\mathbf{u}_{\varepsilon}\cdot\mathcal{F}_{\eta_{\varepsilon}}bdx+\int_{\omega}\partial_{t}\eta_{\varepsilon}bdA\to\int_{\Omega_{\eta}}\mathbf{u}\cdot\mathcal{F}_{\eta}bdx+\int_{\omega}\partial_{t}\eta bdA
\end{equation}
uniformly for all $b\in H^{2}\left(\omega\right)$ and $t\in I$.
The argument is very similar to the one of \eqref{eqn:c-n-strong-conv}.
This allows us to consider in particular $b=\left[\partial_{t}\eta_{\varepsilon}\right]_{\rho}$ with $\rho>0$ a small mollification parameter and then to conclude that 
\begin{equation}\label{eqn:complact-eps-1}
\begin{aligned}\int_{I}\int_{\Omega_{\eta_{\varepsilon}}}\mathbf{u}_{\varepsilon}\cdot\mathcal{F}_{\eta_{\varepsilon}}\left(\partial_{t}\eta_{\varepsilon}\right)dxdt+\int_{I}\int_{\omega}\left(\partial_{t}\eta_{\varepsilon}\right)^{2}dAdt & \to\\
\int_{I}\int_{\Omega_{\eta}}\mathbf{u}\cdot\mathcal{F}_{\eta}\left(\partial_{t}\eta\right)dxdt+\int_{I}\int_{\omega}\left(\partial_{t}\eta\right)^{2}dAdt
\end{aligned}
\end{equation}  as $\varepsilon\to 0$.

The next step is to prove that for any $\tilde{\sigma}>0$ and any smooth $\delta_{\tilde{\sigma}}<\eta$ it holds that 
\begin{equation}
\int_{\Omega_{\eta_{\varepsilon}}}\mathbf{u}_{\varepsilon}\cdot\mathcal{J}_{\delta_{\tilde{\sigma}}}\left(\phi\right)dx\to\int_{\Omega_{\eta}}\mathbf{u}\cdot\mathcal{J}_{\delta_{\tilde{\sigma}}}\left(\phi\right)dx
\end{equation}
uniformly for all $\phi \in H_{0,\text{div}}^{1}$.
Then we may consider $\mathcal{J}_{\delta_{\tilde{\sigma}}}\left(\phi\right)=\psi_{\varepsilon,\theta}$ where $\psi_{\varepsilon,\theta}$ is defined by the approximation Lemma~\ref{lemma:approx-dual-appendix} and estimates 
\[
\left\Vert \mathbf{u}_{\varepsilon}-\mathcal{F}_{\eta_{\varepsilon}}\left(\partial_{t}\eta_{\varepsilon}\right)-\psi_{\varepsilon,\theta}\right\Vert _{\left(H^{s}\left(\mathbb{R}^{3}\right)\right)^{\prime}}<\theta.
\]
Proceeding as in \eqref{eqn:conv-piolatilde-unif}, a similar argument enables us to establish that
\begin{equation}\label{eqn:eps-conv-2}
\int_{I}\int_{\Omega_{\eta_{\varepsilon}}}\mathbf{u}_{\varepsilon}\cdot\left(\mathbf{u}_{\varepsilon}-\mathcal{F}_{\eta_{\varepsilon}}\left[\partial_{t}\eta_{\varepsilon}\right]\right)dxdt\to\int_{I}\int_{\Omega_{\eta}}\mathbf{u}\cdot\left(\mathbf{u}-\mathcal{F}_{\eta}\left[\partial_{t}\eta\right]\right)dxdt.
\end{equation}
From \eqref{eqn:complact-eps-1} and \eqref{eqn:eps-conv-2} by summation 
it follows the $L^{2}$ compactness. Note that from the strong convergence $\mathbf{u}_{\varepsilon}\to\mathbf{u}$ in $L^{2}_{t}L^{2}_{x}$ we can interpolate between $L^{2}_{t}L^{6}_{x}$ to improve the strong convergence to $L^{2}_{t}L^{q}_{x}$ for all $q<6$.

Finally, we only need to repeat the argument leading to \eqref{eqn:convergence-L2-norms} in order to justify the other limit passage  $\sigma \to 0$.

With this, the proof of Theorem~\ref{thm:main} is complete.
\begin{remark}
Please note that we are also able to use the extension operator $\mathcal{F}_{\eta}$ from Proposition~\ref{chap-appendix-prop-extension-plate} and the test function 
\[
\left(\mathcal{F}_{\eta}\left[D_{-h}^{s}D_{h}^{s}\eta-\mathcal{M}_{\eta}\left(D_{-h}^{s}D_{h}^{s}\eta\right)\right],D_{-h}^{s}D_{h}^{s}\eta-\mathcal{M}_{\eta}\left(D_{-h}^{s}D_{h}^{s}\eta\right)\right)
\]
where 
\[D_{h}^{s}\eta=\frac{\eta\left(t,x+he\right)-\eta\left(t,x\right)}{\left|h\right|^{s-1}h},\quad e\in\mathbb{R}^{3},\left|e\right|=1,s>0.\]
This enables us to obtain the additional estimates 
\begin{equation}
\sup_{\varepsilon}\left\Vert \eta_{\varepsilon}\right\Vert _{L_{t}^{2}H_{x}^{2+s}}<\infty
\end{equation}
for all $0<s<\frac{1}{2}$, in the same spirit as \cite{MS22}.
\end{remark}
% \begin{remark}
%     The limit passage $n \to \infty$ is the most tedious part of the proof. The limit passage at a continuous level is much more straightforward; see also \cite[Section 3.1]{LR14} for details of the case of linear membranes, when the problem can be \emph{linearized}, the only nonlinearity being the convective term $\text{div}\left(\mathbf{u}\otimes\mathbf{u}\right)$ which is canonically linearized by $\text{div}\left(\mathbf{u}\otimes\mathbf{v}\right)$. In this way, one avoids the limit passage at the discrete level, weak convergence being sufficient. The Navier-Stokes system is then recovered through a fixed-point $\mathbf{v}\mapsto \mathbf{u}$. When dealing with nonlinear membranes the arguments have to be refined and the limit passage at the discrete level seems unavoidable; see also \cite{MS22}.
% \end{remark}

%\newpage
\section{Auxillary tools}\label{chap:appendix}
In this section we gather various auxiliary results used throughout the proofs above.
\subsection{Reynolds' transport theorem}
We  recall the classical  Reynolds' Transport Theorem, which reads as follows
\begin{theorem}\label{thm:Reynolds-chap-appendix}
For all $g=g(t,x)$ such that the following quantities are well defined, it holds that 
\begin{equation}
    \frac{d}{dt}\int_{\Omega_{\eta\left(t\right)}}gdx=\int_{\Omega_{\eta\left(t\right)}}\partial_{t}gdx+\int_{\partial\Omega_{\eta\left(t\right)}}g\mathbf{v}\cdot\nu_{\eta}dA.
\end{equation}
\end{theorem}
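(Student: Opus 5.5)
We will prove the Reynolds transport formula for the moving domain by pulling everything back to the fixed reference cube $\Omega$ and differentiating under the integral there. The boundary velocity $\mathbf{v}$ is understood as that of the parametrisation: zero on the bottom and lateral parts, and $\partial_t\eta\,\mathbf{e}_3$ at the point $\phi_{\eta(t)}(x,y)$ on $\Gamma_{\eta}(t)$.

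\textbf{Step 1: change of variables.} We use the flow map
\[
\Psi_t(x,y,z):=\bigl(x,y,z(1+\eta(t,x,y))\bigr),
\]
which maps $\Omega$ onto $\Omega_{\eta(t)}$. Its Jacobian determinant is $1+\eta>0$, positive by \eqref{eqn:condition-eta-norm}. Changing variables gives
\[
\int_{\Omega_{\eta(t)}}g\,dx=\int_\Omega g(t,\Psi_t(X))\,(1+\eta(t,x,y))\,dX.
\]

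\textbf{Step 2: differentiate in $t$.} Since $\Omega$ no longer depends on time, and assuming $g$ and $\eta$ are regular enough that differentiation under the integral sign is justified (this is the content of ``well defined''), the chain rule gives
\[
\frac{d}{dt}\int_{\Omega_{\eta(t)}}g\,dx=\int_\Omega\Bigl[\partial_t g+\nabla g\cdot \mathbf{w}\Bigr]\circ\Psi_t\,(1+\eta)\,dX+\int_\Omega g\circ\Psi_t\,\partial_t\eta\,dX.
\]
Here $\mathbf{w}:=\partial_t\Psi_t\circ\Psi_t^{-1}$ is the Eulerian velocity field. Explicitly, $\mathbf{w}(t,x,y,z)=\frac{z}{1+\eta}\partial_t\eta\,\mathbf{e}_3$. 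This field vanishes at $z=0$, equals $\partial_t\eta\,\mathbf{e}_3$ on $\Gamma_\eta(t)$, and is periodic in $(x,y)$.

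\textbf{Step 3: rewrite as a divergence.} A direct computation gives $\operatorname{div}\mathbf{w}=\partial_t\eta/(1+\eta)$. This is the Euler expansion formula $\partial_t \det\nabla\Psi=(\operatorname{div}\mathbf{w}\circ\Psi)\det\nabla\Psi$. Transforming back to $\Omega_{\eta(t)}$, the right-hand side of Step 2 becomes
\[
\int_{\Omega_{\eta(t)}}\partial_t g+\operatorname{div}(g\mathbf{w})\,dx.
\]

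\textbf{Step 4: apply the divergence theorem.} The divergence theorem on $\Omega_{\eta(t)}$ turns the second term into $\int_{\partial\Omega_{\eta(t)}}g\,\mathbf{w}\cdot\nu_\eta\,dA$. The contributions from opposite lateral faces cancel by periodicity, and the bottom contributes nothing because $\mathbf{w}=0$ there. Only $\Gamma_\eta(t)$ remains, where $\mathbf{w}=\mathbf{v}$. This gives the formula.

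On $\Gamma_\eta(t)$ one may also write
\[
\mathbf{v}\cdot\nu_\eta\,dA_\eta=\partial_t\eta\,\mathbf{e}_3\cdot\mathbf{n}^\eta\,dA=\partial_t\eta\,dA,
\]
using $dA_\eta=J_\eta\,dA$ and \eqref{eqn:jacobian}. This identity is what is used in the weak formulation.

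\textbf{Main obstacle.} The only delicate point is regularity. For smooth $g$ and $\eta$ the argument above is classical. For the weakly regular functions used later ($\eta\in W^{1,\infty}_tL^2_x\cap L^\infty_tH^2_x$), I would first prove the identity for mollified $g$ and $\eta$, in the integrated-in-time form. I would then pass to the limit using the Hölder continuity of Remark~\ref{remark: Holder-cont} and the trace estimates.
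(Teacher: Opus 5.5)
Your proof is correct. The paper gives no argument of its own and simply cites Gurtin; what you wrote is the classical proof behind that citation (pull back along a flow extending the boundary motion, use the Euler expansion formula $\partial_t\det\nabla\Psi=(\operatorname{div}\mathbf{w}\circ\Psi)\det\nabla\Psi$, then apply the divergence theorem), specialised to the explicit vertical stretching $\Psi_t$, with the bottom and periodic lateral contributions handled correctly.

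Your closing paragraph on low regularity is not needed, since the statement assumes all terms are well defined and the paper uses it only in the formal, smooth derivation of the weak formulation.
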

Here $\mathbf{v}$ is the speed of the boundary $\partial\Omega_{\eta(t)}$, so $\mathbf{v}\left(t,\cdot\right)=\left(\partial_{t}\eta\nu\right)\circ\phi_{\eta\left(t\right)}^{-1}$.
\begin{proof}
    See e.g. \cite{Gurtin81}.
\end{proof} 
% \subsection{Poincar\'{e}'s inequality}
% \begin{theorem}\cite[Theorem 13.15]{Le17} \label{thm:poincare} If $\Omega \subset \mathbb{R}^{N}$ is an open set that lies between two hyperplanes which are at distance $d$, $m \in \mathbb{N}$ and $1 \le p < \infty$, then there is a constant $c=c(m,N,p)>0$ such that \[\int_{\Omega}\left|\nabla^{k}\mathbf{u}\right|^{p}dx\le c\frac{d^{p\left(m-k\right)}}{p\left(m-k\right)}\int_{\Omega}\left|\nabla^{m}\mathbf{u}\right|^{p}dx\quad\forall \mathbf{u}\in W_{0}^{m,p}(\Omega),0\le k\le m-1.\] 
% \end{theorem} 
\subsection{Korn's identity}
\begin{lemma}\label{lm:Korn} If we denote $D\left(\mathbf{u}\right):=\frac{1}{2}\left(\nabla\mathbf{u}+\left(\nabla\mathbf{u}\right)^{T}\right)$ then it holds that \[\left\Vert \nabla\mathbf{u}\right\Vert _{L^{2}\left(\Omega_{\eta\left(t\right)}\right)}^{2}=2\left\Vert D\left(\mathbf{u}\right)\right\Vert _{L^{2}\left(\Omega_{\eta\left(t\right)}\right)}^{2} \]
and more general, 
\[
2\int_{\Omega_{\eta}}D\left(\mathbf{u}\right):D\left(\mathbf{q}\right)dx=\int_{\Omega_{\eta}}\nabla\mathbf{u}:\nabla\mathbf{q}dx
\]
for all $\mathbf{q}\in W^{1,2}\left(\Omega_{\eta}\right)$ with $\text{tr}_{\eta}\left(\mathbf{q}\right)=\xi\nu$ for some scalar function $\xi$.
\end{lemma}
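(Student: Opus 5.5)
The plan is to reduce the identity to the vanishing of a single boundary integral. Pointwise we have
\[
2D(\mathbf{u}):D(\mathbf{q})=\nabla\mathbf{u}:\nabla\mathbf{q}+\nabla\mathbf{u}:(\nabla\mathbf{q})^{T},
\]
so the claim is equivalent to
\[
\int_{\Omega_{\eta}}\partial_{j}\mathbf{u}^{i}\,\partial_{i}\mathbf{q}^{j}\,dx=0 .
\]
The first identity is the special case $\mathbf{q}=\mathbf{u}$. As in the rest of the paper, I assume that $\mathbf{u}$ is divergence free, vanishes at $z=0$, is periodic in $(x,y)$, and has a trace of the form $\zeta\nu$ (in the application $\zeta=\partial_t\eta$, $\nu=\mathbf{e}_3$). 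By density (the trace and extension operators of \cite{Gr05} for $\eta\in C^{0,1}_x$, $\|\eta\|_{L^\infty}<1$) I will first take $\mathbf{u},\mathbf{q}$ smooth up to the boundary and pass to the limit at the end. Both sides are continuous in $W^{1,2}\times W^{1,2}$, so the limit step is routine.

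For smooth fields I write
\[
\partial_{j}\mathbf{u}^{i}\,\partial_{i}\mathbf{q}^{j}=\partial_{i}\big(\partial_{j}\mathbf{u}^{i}\,\mathbf{q}^{j}\big)-\partial_{j}(\operatorname{div}\mathbf{u})\,\mathbf{q}^{j}.
\]
The second term vanishes since $\operatorname{div}\mathbf{u}=0$. The divergence theorem then gives
\[
\int_{\Omega_\eta}\partial_{j}\mathbf{u}^{i}\,\partial_{i}\mathbf{q}^{j}\,dx=\int_{\partial\Omega_\eta}\big((\mathbf{q}\cdot\nabla)\mathbf{u}\big)\cdot\nu_\eta\,dA .
\]
The lateral contributions cancel by $\mathbb{T}^2$-periodicity, and the bottom contribution vanishes because $\mathbf{q}=\mathbf{0}$ on $z=0$. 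It therefore remains to show that the integral over $\Gamma_\eta$ vanishes.

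On $\Gamma_\eta$ I will split $\mathbf{q}=\xi\nu$ into its normal and tangential parts with respect to $\nu_\eta$. Tangential derivatives of $\mathbf{u}$ along $\Gamma_\eta$ are determined by the trace $\zeta\nu$, so they can be written entirely in terms of $\zeta$ and its surface gradient. For the normal derivative I use the constraint $\operatorname{div}\mathbf{u}=0$, which gives
\[
\nu_\eta\cdot\partial_{\nu_\eta}\mathbf{u}=-\operatorname{div}_{\Gamma_\eta}(\zeta\nu)\quad\text{on }\Gamma_\eta .
\]
After these substitutions the boundary integrand becomes an expression in $\xi$, $\zeta$, $\nabla\eta$ and surface derivatives alone. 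I then integrate by parts on $\omega=\mathbb{T}^2$, with no boundary terms, and aim to show that what remains is zero.

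I expect this last computation to be the main obstacle. For a flat interface ($\eta\equiv$ const) it is immediate: $\partial_{\nu}\mathbf{u}\cdot\nu=\partial_3\mathbf{u}^3=-(\partial_1\mathbf{u}^1+\partial_2\mathbf{u}^2)=0$ on $\Gamma$, because $\mathbf{u}^1=\mathbf{u}^2=0$ there. For curved $\Gamma_\eta$, however, curvature terms of the form $\xi\zeta\,\nabla^2\eta$ appear. I would first try to show that these terms combine into a total surface divergence. If they do not cancel exactly, the fallback is to record the surviving term $\int_\omega \xi\,\zeta\,(\cdots)\,dA$ explicitly, and to restrict the statement to the flat case or to pairs $(\mathbf{u},\mathbf{q})$ for which it vanishes. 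In the latter case one would check that, in the weak formulation, the term is absorbed by the convective and transport contributions computed via Reynolds' theorem.
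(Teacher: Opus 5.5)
Your reduction is the natural one. Since $2D(\mathbf{u}):D(\mathbf{q})=\nabla\mathbf{u}:\nabla\mathbf{q}+\nabla\mathbf{u}:(\nabla\mathbf{q})^{T}$, the conditions $\operatorname{div}\mathbf{u}=0$, periodicity and $\mathbf{q}=\mathbf{0}$ at $z=0$ reduce everything to showing $\int_{\Gamma_\eta}((\mathbf{q}\cdot\nabla)\mathbf{u})\cdot\nu_\eta\,dA=0$. But you stop at exactly the one nontrivial step, and your prediction for it is wrong. As written, the proposal does not prove the lemma, and the fallback of restricting to a flat interface would prove a strictly weaker statement.

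No curvature terms arise, because the trace direction $\nu=\mathbf{e}_3$ is a \emph{constant} vector. In your own decomposition, set $\tau:=\mathbf{e}_3-(\mathbf{e}_3\cdot\nu_\eta)\nu_\eta$. Then $\operatorname{div}_{\Gamma_\eta}(\zeta\mathbf{e}_3)=\tau\cdot\nabla_{\Gamma_\eta}\zeta$, since no derivative ever falls on the direction field. A term $\zeta\operatorname{div}_{\Gamma_\eta}\nu_\eta$, i.e.\ a mean curvature, would only appear for traces along the deformed normal. Also $\nu_\eta\cdot\partial_\tau\mathbf{u}=(\nu_\eta\cdot\mathbf{e}_3)\,\tau\cdot\nabla_{\Gamma_\eta}\zeta$. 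Hence
\[
\big((\mathbf{q}\cdot\nabla)\mathbf{u}\big)\cdot\nu_\eta=\xi\Big(-(\mathbf{e}_3\cdot\nu_\eta)\operatorname{div}_{\Gamma_\eta}(\zeta\mathbf{e}_3)+\nu_\eta\cdot\partial_\tau\mathbf{u}\Big)=0
\]
pointwise on $\Gamma_\eta$. In coordinates this is just your flat computation plus the chain rule. Differentiating $\mathbf{u}^\alpha(x,y,1+\eta(x,y))=0$ gives $\partial_\alpha\mathbf{u}^\alpha=-\partial_\alpha\eta\,\partial_3\mathbf{u}^\alpha$ on $\Gamma_\eta$ for $\alpha=1,2$ (no summation). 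Then $\operatorname{div}\mathbf{u}=0$ yields $\partial_3\mathbf{u}\cdot\mathbf{n}^\eta=\partial_3\mathbf{u}\cdot(-\partial_1\eta,-\partial_2\eta,1)=0$, while $(\mathbf{q}\cdot\nabla)\mathbf{u}=\xi\,\partial_3\mathbf{u}$ there. No integration by parts on $\omega$ and no absorption into convective terms is needed. The paper itself only refers to \cite[Lemma A.5]{LR14}.

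Your density step is also too quick. The boundary integrand involves traces of $\nabla\mathbf{u}$, and approximating a solenoidal $W^{1,2}$ field with vertical trace by smooth \emph{divergence-free} fields of the same type on $\Omega_\eta$ is not automatic. The same computation lets you avoid this. For smooth $\mathbf{u}$ with $\mathbf{u}^1=\mathbf{u}^2=0$ on $\Gamma_\eta$, not necessarily solenoidal, it gives $\partial_3\mathbf{u}\cdot\mathbf{n}^\eta=\operatorname{div}\mathbf{u}$ on $\Gamma_\eta$. Now integrate $-\int\nabla(\operatorname{div}\mathbf{u})\cdot\mathbf{q}$ by parts once more. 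The two boundary terms, $\xi\,\partial_3\mathbf{u}\cdot\mathbf{n}^\eta$ and $\operatorname{div}\mathbf{u}\,\mathbf{q}\cdot\mathbf{n}^\eta=\xi\operatorname{div}\mathbf{u}$ (up to the common factor $|\mathbf{n}^\eta|^{-1}$), cancel. This yields
\[
\int_{\Omega_\eta}\partial_j\mathbf{u}^i\,\partial_i\mathbf{q}^j\,dx=\int_{\Omega_\eta}\operatorname{div}\mathbf{u}\,\operatorname{div}\mathbf{q}\,dx,
\]
and both sides are continuous in $W^{1,2}$. So you only need density of smooth fields with vertical traces, with no divergence constraint, and the lemma follows. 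This identity also shows exactly why $\operatorname{div}\mathbf{u}=0$ is required, a hypothesis the statement leaves implicit.
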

\begin{proof}
    See e. g. \cite[Lemma A.5]{LR14}.
\end{proof}
\subsection{An existence result for integro-differential equations}\label{sec:appendix-integro-diff}
\begin{lemma}
Consider the system 
\begin{equation}\label{eqn:chap-appendix-integro-diff}
\mathbf{z}^{\prime}\left(t\right)=\mathbf{B}\left(t,\mathbf{z}\left(t\right)\right)+\int_{I}\mathbf{C}\left(t,s,\mathbf{z}\left(s\right)\right)ds+\mathbf{D}\left(t\right),\quad\mathbf{z}\left(0\right)=\mathbf{z}_{0}
\end{equation}
with the unknown $\mathbf{z}\in C^{1}\left(\mathbb{R_{+}};\mathbb{R}^{n}\right)$ and $\mathbf{z}_{0} \in \mathbb{R}^{n}$. Suppose moreover that 
\[\mathbf{B}\in C\left(\mathbb{R}_{+}\times\mathbb{R}^{n};\mathbb{R}^{n}\right),\quad\mathbf{B}\in C\left(\mathbb{R}_{+}\times\mathbb{R}_{+}\times\mathbb{R}^{n};\mathbb{R}^{n}\right),\quad\mathbf{D}\in C\left(\mathbb{R}_{+};\mathbb{R}^{n}\right)\]
with $\mathbf{B,C}$ locally-Lipschitz in the last variable. Then there exists a time $T_0 >0$ for which the system \eqref{eqn:chap-appendix-integro-diff} admits a solution. The solution may be prolonged to a maximal existence interval $\left[0,T^{*}\right]$ provided that $\lim\limits_{t\to T^{*}}\left|\mathbf{z}\left(t\right)\right|=\infty$.
\end{lemma}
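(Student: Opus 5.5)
We reduce the system to a first-order ODE in $\mathbb{R}^n$ and run the classical Picard--Lindel\"of scheme, treating the nonlocal term $\int_I\mathbf{C}(t,s,\mathbf{z}(s))\,ds$ as a perturbation. (In the hypotheses, the second assumption should read $\mathbf{C}\in C(\mathbb{R}_+\times\mathbb{R}_+\times\mathbb{R}^n;\mathbb{R}^n)$.) Here we read the integral as running over $[0,t]$, the Volterra form that arises after integrating \eqref{eqn:xk}. If it genuinely runs over all of $I$, the same argument works once we use a contraction on $C(I)$ with a weighted or small-Lipschitz norm, and only the smallness condition changes.

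\textbf{Local existence.} Fix $r>0$ and $T_1>0$, and let $B:=\{\mathbf{z}\in C([0,T_1];\mathbb{R}^n):\|\mathbf{z}-\mathbf{z}_0\|_\infty\le r\}$. Define the map
\[
\Phi(\mathbf{z})(t):=\mathbf{z}_0+\int_0^t\Big(\mathbf{B}(\tau,\mathbf{z}(\tau))+\int_0^\tau \mathbf{C}(\tau,s,\mathbf{z}(s))\,ds+\mathbf{D}(\tau)\Big)d\tau .
\]
By continuity, $\mathbf{B}$, $\mathbf{C}$ and $\mathbf{D}$ are bounded by some $M$ on the compact set $[0,T_1]^2\times\overline{B_r(\mathbf{z}_0)}$. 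On the same set, local Lipschitz continuity gives a common constant $L$ for $\mathbf{B}$ and $\mathbf{C}$ in the last variable. It follows that
\[
\|\Phi(\mathbf{z})-\mathbf{z}_0\|_\infty\le T_0 M(2+T_0)
\quad\text{and}\quad
\|\Phi(\mathbf{z})-\Phi(\mathbf{w})\|_\infty\le T_0L(1+T_0)\|\mathbf{z}-\mathbf{w}\|_\infty .
\]
Choose $T_0\le T_1$ small enough that $\Phi$ maps $B$ into itself and is a contraction. Banach's fixed point theorem then gives a unique continuous solution of the integral equation. The integrand is continuous, so this solution is $C^1$ and solves \eqref{eqn:chap-appendix-integro-diff}.

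\textbf{Continuation.} Take the supremum $T^*$ of times up to which a solution exists; uniqueness makes local solutions agree on overlaps. Suppose $T^*<\infty$ while $|\mathbf{z}|$ stays bounded by $K$ on $[0,T^*)$. The constants $M$ and $L$ can then be taken uniform on $[0,T^*]^2\times\overline{B_{K+1}(0)}$. The step $T_0$ depends only on these constants and on $r=1$, not on the starting time. Restarting from $t_0=T^*-T_0/2$ with the history on $[0,t_0]$ treated as a known continuous forcing therefore extends the solution past $T^*$, a contradiction. Hence either $T^*=\infty$ or $|\mathbf{z}(t)|\to\infty$ as $t\to T^*$, which is the stated blow-up alternative.

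\textbf{Main obstacle.} The only delicate point is the continuation step for the memory term. When we restart, the integral over $[0,t_0]$ must be absorbed into a new continuous forcing $\tilde{\mathbf{D}}$, and the Lipschitz bound must still be uniform in the restart time. Both follow from the boundedness assumption, so we do not expect a genuine difficulty there.

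For the application to \eqref{eqn:xk}, set $\mathbf{z}=(\mathbf{b}_n,\mathbf{b}_n')$. Invert the mass matrix $M(t)$, which is positive definite by \eqref{eqn:M(t)-pos-def} and continuous in $t$ since $\delta\in C^3$. All remaining terms are polynomial in $\mathbf{z}$ with continuous coefficients, so the hypotheses hold. Blow-up is then excluded by \eqref{eqn:cons-gronwall}.
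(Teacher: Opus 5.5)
Your local step is the paper's argument, a Picard--Lindel\"of contraction on a short interval, with the constants written out. Reading the memory term as $\int_0^t$ is the right repair: with $\int_I$ over a fixed $I$, the paper's map is not even defined on $C([0,T_0])$ unless $I\subset[0,T_0]$.

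\textbf{The gap: the blow-up alternative.} The problem is the last sentence of your continuation step. The contradiction argument only shows that $\mathbf{z}$ is unbounded on $[0,T^*)$, i.e.\ $\limsup_{t\to T^*}|\mathbf{z}(t)|=\infty$. It does not give $|\mathbf{z}(t)|\to\infty$. The usual ODE upgrade is to restart from times $t_k\to T^*$ with $|\mathbf{z}(t_k)|\le K$. That is unavailable here, because your restart needs a bound on the whole history $\mathbf{z}|_{[0,t_0]}$ to control $\tilde{\mathbf{D}}$ and the step length, not just on $\mathbf{z}(t_0)$.

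With memory the full limit is in fact false. Take $z'(t)=\tfrac{\mu}{3}z(t)^3+F_0-\int_0^t z(s)^7\,ds$, which satisfies all the hypotheses and is equivalent to $z''+z^7=\mu z^2z'$. Its energy $E=\tfrac12 z'^2+\tfrac18 z^8$ obeys $E'=\mu z^2 z'^2\approx c\mu E^{5/4}$ on average, so $E$ blows up in finite time. At amplitude $A$ the anti-damping has relative size only $\mu/A$, so $z$ keeps oscillating through $0$ and $\liminf_{t\to T^*}|z(t)|=0$.

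So the correct alternative is $\sup_{[0,T^*)}|\mathbf{z}|=\infty$. This costs nothing in the application, which only uses ``bounded $\Rightarrow$ global''. Moreover, with your choice $\mathbf{z}=(\mathbf{b}_n,\mathbf{b}_n')$ you have $\mathbf{C}=0$, and then the standard ODE argument does give the limit.

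\textbf{The Fredholm remark and the application.} Your side remark on the Fredholm reading is also wrong. A term $\int_I\mathbf{C}(t,s,\mathbf{z}(s))\,ds$ over a fixed $I$ is anticipating, so no exponential weight makes it small. Local existence on $[0,T_0]$ and continuation have no meaning there. A solution need not exist at all: for $z'=\lambda\int_0^T z\,ds$ with $z(0)=z_0\neq0$ and $\lambda T^2=2$, integrating $z=z_0+\lambda St$, with $S:=\int_0^T z\,ds$, gives $S=z_0T+S$, which is impossible. So smallness of $|I|$ relative to the Lipschitz constant is a genuine restriction, not a change of constants.

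Finally, in your application the forcing coefficients $\int_{\Omega_\delta}\mathbf{f}\cdot\mathbf{X}_k\,dx$ are only $L^2$ in time for $\mathbf{f}\in L^2$, so ``continuous coefficients'' does not hold as stated. Either regularize $\mathbf{f}$ and $g$, or run the same contraction in the Carath\'eodory setting, with $\int_0^{T_0}|\mathbf{D}|$ small in place of $T_0M$ and absolutely continuous rather than $C^1$ solutions.
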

\begin{proof}
  It foloows by the Picard-Lindel\"{o}f principle, considering the mapping 
  \[
\mathbf{z}\left(\tau\right)\mapsto\mathbf{z}_{0}+\int_{0}^{\tau}\mathbf{B}\left(t,\mathbf{z}\left(t\right)\right)+\int_{I}\mathbf{C}\left(t,s,\mathbf{z}\left(s\right)\right)ds+\mathbf{D}\left(t\right)dt
  \]
  and proving that it is a contraction on a small interval $[0,T_0]$.
\end{proof}

\subsection{The Leray-Schauder  fixed point theorem}\label{thm:leray-schauder}
\begin{theorem}\label{thm:Schaeffer}
Let $X$ be a Banach space and   $A :  X \mapsto X$ be a continuous and compact mapping such that the set \[\left\{ x\in X:x=\lambda Ax\ \text{for some}\ \lambda\in\left[0,1\right]\right\} \]
is bounded. Then $A$ has at least one fixed point.
\end{theorem}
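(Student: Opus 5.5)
This is the Leray--Schauder theorem (Schaefer's form), and I would derive it from Schauder's fixed point theorem by a radial retraction. Let $M$ be a bound for the set
\[
S=\{x\in X:\ x=\lambda Ax \text{ for some } \lambda\in[0,1]\},
\]
and fix $R>M$. Define the radial retraction $r:X\to \overline{B}_R$ by $r(y)=y$ if $\|y\|\le R$ and $r(y)=Ry/\|y\|$ otherwise. It is continuous, so $B:=r\circ A$ maps the closed convex set $\overline{B}_R$ into itself and is continuous.

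\emph{Compactness.} Since $A$ is compact, $A(\overline{B}_R)$ is relatively compact. Then $B(\overline{B}_R)=r(A(\overline{B}_R))$ is contained in $r\bigl(\overline{A(\overline{B}_R)}\bigr)$, which is the continuous image of a compact set and hence compact. I would read ``compact mapping'' in the statement as ``maps bounded sets to relatively compact sets,'' which is the standard meaning; under the weaker reading (only that the image of $X$ is relatively compact) the argument is unchanged.

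\emph{Fixed point and conclusion.} Schauder's theorem, in the version for a continuous map of a closed convex set into a compact subset of itself, gives $x\in\overline{B}_R$ with $x=r(Ax)$. There are two cases. If $\|Ax\|\le R$, then $x=Ax$ and we are done. Otherwise $x=\lambda Ax$ with $\lambda=R/\|Ax\|\in(0,1)$, so $x\in S$ and $\|x\|\le M<R$. But in this case $\|x\|=\|r(Ax)\|=R$, a contradiction. Hence $A$ has a fixed point.

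The only substantive step is invoking Schauder's theorem itself, which I take as known; the only care needed is the compactness check for $r\circ A$ above.
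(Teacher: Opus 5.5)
Your proof is correct and is essentially the standard argument behind the paper's citation of Evans (Schaefer's fixed point theorem): compose $A$ with the radial retraction onto a ball whose radius exceeds the a priori bound, apply Schauder's theorem, and exclude the case $\|Ax\|>R$ using the bound on $\{x=\lambda Ax\}$. Evans applies Schauder on the closed convex hull of the truncated image (compact by Mazur), whereas you use the closed-convex-set/precompact-image version, which is an equivalent formulation. One small slip: relative compactness of $A(X)$ is a \emph{stronger}, not a weaker, hypothesis than compactness on bounded sets, although your argument of course covers it.
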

\begin{proof}
    See e.g. \cite[Theorem 4]{Ev10}.
\end{proof}
%\subsection{The extension operator}
%For  a continuous mapping $\delta : \omega \mapsto \mathbb{R}$  we denote  \begin{equation}\label{eqn:def-tau(eta)}
%\tau\left(\delta\right):=\begin{cases}
%\frac{\kappa}{\kappa-\left\Vert \delta\right\Vert _{L^{\infty}\left(\omega\right)}} & \left\Vert \delta\right\Vert _{L^{\infty}\left(\omega\right)}<\kappa\\
%\infty & \text{else}
%\end{cases}. 
%  \end{equation}
%Following \cite[Lemma 2.6]{LR14} an extension operator  from $\Omega$ to $\Omega_{\eta}$ and the corresponding Lebesgue and Sobolev spaces is constructed where $\eta \in H^{2}(\omega)$. Since this is a H\"{o}lder continuous function but not a Lipschitz continuous one, there is a small loss in the order of integrability. %However when $\eta \in C^{\infty}(\omega)$ we can overwhelm this loss.
%We have
%\begin{lemma}\label{lm:extension}
%For any $1<p \le \infty$ and $\eta \in W^{1,\infty}(\omega)$ with $\left\Vert \eta\right\Vert _{L^{\infty}\left(\omega\right)}<\kappa$ the linear mapping $\text{Ext} : v \mapsto v \circ \psi_{\eta}$ is continuous from $L^{p}(\Omega_{\eta})$ to $L^{p} (\Omega)$ and from $W^{1,p}(\Omega_{\eta})$ to $W^{1,p}(\Omega)$. Analogously when $\psi_{\eta}$ is replaced by $\psi_{\eta} ^{-1}$. The continuity constants depend on $\Omega$, $p$ and a bound for $\left\Vert \eta\right\Vert _{H^{2}\left(\omega\right)}$ and $\tau (\eta)$.
%\begin{proof}
%It follows from the mentioned reference, keeping in mind that $\eta$ is Lipschitz continuous in this case.
%\end{proof}
%\end{lemma}
\subsection{The Aubin-Lions lemma}
\begin{lemma}
Let $X_0,\  X,\  X_1$ be three Banach spaces with $X_1$ reflexive and  such that $X_{0}\hookrightarrow\hookrightarrow X\hookrightarrow X_{1}$. Then we have the compact embedding
\[
W:=\left\{ u\in L^{p}\left(I;X_{0}\right),\ u^{\prime}\in L^{q}\left(I;X_{1}\right)\right\} \hookrightarrow\hookrightarrow L^{p}\left(I;X\right)
\]
for $p,q>1$.\end{lemma}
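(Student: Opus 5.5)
This is the classical Aubin--Lions--Simon compactness lemma, so I would give the standard proof. It rests on Ehrling's inequality together with a time-translation (or averaging) compactness criterion in $L^p(I;X)$.

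\textbf{Step 1: Ehrling's lemma.} For every $\epsilon>0$ there is $C_\epsilon$ with
\[
\|v\|_{X}\le \epsilon\|v\|_{X_0}+C_\epsilon\|v\|_{X_1}\quad\forall v\in X_0.
\]
I would prove this by contradiction. If it failed for some $\epsilon_0$, there would be $v_k$ with $\|v_k\|_{X_0}=1$ and $\|v_k\|_X>\epsilon_0+k\|v_k\|_{X_1}$. The compact embedding $X_0\hookrightarrow\hookrightarrow X$ gives a subsequence converging in $X$ to some $v$. Since $\|v_k\|_X$ is bounded, $\|v_k\|_{X_1}\to0$, and continuity of $X\hookrightarrow X_1$ then forces $v=0$. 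This contradicts $\|v\|_X\ge\epsilon_0$.

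\textbf{Step 2: reduction to compactness in $L^p(I;X_1)$.} Let $(u_k)$ be bounded in $W$. By Step 1, applied pointwise in time and integrated,
\[
\|u_k-u_j\|_{L^p(I;X)}\le 2\epsilon\sup_k\|u_k\|_{L^p(I;X_0)}+C_\epsilon\|u_k-u_j\|_{L^p(I;X_1)}.
\]
So it suffices to show that $(u_k)$ is precompact in $L^p(I;X_1)$: a subsequence that is Cauchy in $L^p(I;X_1)$ is then Cauchy in $L^p(I;X)$.

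\textbf{Step 3: compactness in $L^p(I;X_1)$.} This is the main step. I would use the Simon criterion: a bounded family $F\subset L^p(I;X_1)$ is precompact provided
\begin{enumerate}[(a)]
\item the averages $\int_{t_1}^{t_2}u\,dt$ form a relatively compact set in $X_1$ for all $t_1<t_2$;
\item $\|u(\cdot+h)-u\|_{L^p(0,T-h;X_1)}\to0$ as $h\to0$, uniformly over $F$.
\end{enumerate}

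For (a), the averages are bounded in $X_0$ by H\"older's inequality. Hence they are relatively compact in $X$, and therefore in $X_1$.

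For (b), write $u(t+h)-u(t)=\int_t^{t+h}u'(s)\,ds$. H\"older's inequality gives $\|u(t+h)-u(t)\|_{X_1}\le h^{1-1/q}\|u'\|_{L^q(I;X_1)}$. Taking the $L^p$ norm in $t$ yields a bound $\lesssim h^{1-1/q}$, uniform over the family, because $q>1$. This is where the hypothesis $q>1$ is used.

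To prove the criterion itself, I would approximate each $u$ by the time-mollified or Steklov-averaged functions $u_h(t)=\frac1h\int_t^{t+h}u$. By (b), $u_h\to u$ uniformly over the family. For fixed $h$, the family $\{u_h\}$ is equicontinuous in time with values in a fixed compact subset of $X_1$, by (a). Arzel\`a--Ascoli then makes it precompact in $C([0,T-h];X_1)$. A diagonal/$\epsilon$-net argument, with the short interval near $T$ handled by the $L^p$-equiintegrability that follows from the $L^p(I;X_0)$ bound, gives total boundedness in $L^p(I;X_1)$.

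I expect the main obstacle to be this endpoint and uniformity bookkeeping in Step 3, when $p<\infty$ and one only has $L^p$ rather than $L^\infty$ control in time. The reflexivity of $X_1$ is not really needed in this route. If one prefers a weak-compactness proof (extracting weakly convergent subsequences in $L^p(I;X_0)$, $p<\infty$, and upgrading), reflexivity is where it would enter.
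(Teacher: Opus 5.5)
The paper gives no argument here, only a reference to \cite{Seregin}. The reflexivity assumption on $X_1$ suggests the classical Lions-type proof: extract weakly convergent subsequences from a bounded set in $W$, then upgrade to strong convergence. Your route is genuinely different and more general: Ehrling's inequality reduces the problem to precompactness in $L^p(I;X_1)$, and Simon's averaging/translation criterion then proves total boundedness directly, never using reflexivity. Steps 1 and 2 and your verification of (a) and (b) are correct.

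One justification in the endpoint bookkeeping is wrong. A bound in $L^p(I;X_0)$ does \emph{not} give $L^p$-equiintegrability near $t=T$. For fixed $0\neq v\in X_0$, the functions $u_k=k^{1/p}\chi_{(T-1/k,T)}v$ are bounded in $L^p(I;X_0)$, yet $\int_{T-h}^T\|u_k\|_{X_1}^p\,dt=\|v\|_{X_1}^p$ for all $k\ge 1/h$.

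What actually controls the endpoint is the derivative bound. The family is bounded in $W^{1,1}(I;X_1)\hookrightarrow C(\bar I;X_1)$, so $\sup_t\|u(t)\|_{X_1}\le T^{-1}\|u\|_{L^1(I;X_1)}+\|u'\|_{L^1(I;X_1)}\le C$ uniformly, and hence $\|u\|_{L^p(T-h,T;X_1)}\le Ch^{1/p}$. Alternatively, use backward averages $\frac1h\int_{t-h}^t u\,ds$ on $[h,T]$, since (a) and (b) are invariant under $t\mapsto T-t$; this is Simon's own device.

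You also treat only $p<\infty$, whereas the statement allows $p=\infty$, and the paper applies the lemma exactly there ($L^\infty(I;H^2(\omega))$ into $L^\infty(I;W^{1,3}(\omega))$). In that case neither the $L^p$ form of the criterion nor short-interval smallness is available, but your ingredients still close:
\begin{enumerate}[(i)]
\item Your estimate in (b) holds pointwise: $\|u(t)-u(s)\|_{X_1}\le|t-s|^{1-1/q}\|u'\|_{L^q(I;X_1)}$. This gives uniform equicontinuity, and it is here that $q>1$ is genuinely needed; for $p<\infty$ even $q=1$ would suffice.
\item $\{u(t)\}$ is relatively compact in $X_1$, because each $u(t)$ is an $X_1$-limit of averages lying in a fixed ball of $X_0$.
\end{enumerate}
Arzel\`a--Ascoli then gives precompactness in $C(\bar I;X_1)$, and your Step 2 with $p=\infty$ transfers it to $L^\infty(I;X)$.
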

\begin{proof}
    See e.g. \cite{Seregin}.
\end{proof}
\subsection{The Piola mapping}
We follow \cite[Remark 2.5]{LR14} and state the following
\begin{lemma}\label{lm:Piola}
 Let $\delta \in C^{2}(\omega)$  with $\left\Vert \delta\right\Vert _{L^{\infty}\left(\omega\right)}<\kappa$ and $\mathbf{g}: \Omega \mapsto\mathbb{R}^{3}$ . 
 Let $\psi_{\delta}:\Omega\mapsto\Omega_{\delta}$ be a diffeomorphism from the refernce configuration $\Omega$ to the deformed one $\Omega_{\delta}$.
 
 We define the    \emph{Piola transform} of $\mathbf{g}$ under $\psi_\delta$ by 
 \[
\mathcal{J}_{\delta}\mathbf{g}:=\left(\nabla\psi_{\delta}\left(\det\nabla\psi_{\delta}\right)^{-1}\mathbf{g}\right)\circ\psi_{\delta}^{-1}.
 \]
Then, for any $k\in\left\{ 0,1\right\} ,\ p\ge1$ the operator 
\[
\mathcal{J}_{\delta}:W^{k,p}\left(\Omega\right)\mapsto W^{k,p}\left(\Omega_{\delta}\right),\quad\mathbf{g}\mapsto\mathcal{J}_{\delta}\mathbf{g}
\]
defines an isomorphism which  moreover preserves the space periodic boundary values and the divergence-free condition. 

% Furthermore, in the case of elastic plates if $\mathbf{g}\in L^{p}\left(I;L^{q}\left(\Omega\right)\right)$ then 
% \[
% \left\Vert \partial_{t}\mathcal{J}_{\delta}\mathbf{g}\right\Vert _{L_{t}^{p}L_{x}^{q}}\le\left\Vert \partial_{t}\mathbf{g}\right\Vert _{L_{t}^{p}L_{x}^{q}}+\left\Vert \mathbf{g}\partial_{t}\delta\right\Vert _{L_{t}^{p}L_{x}^{q}}+\left\Vert \left|\nabla\mathbf{g}\right|\partial_{t}\delta\right\Vert _{L_{t}^{p}L_{x}^{q}}+\left\Vert \mathbf{g}\left|\nabla\delta\right|\partial_{t}\delta\right\Vert _{L_{t}^{p}L_{x}^{q}}
% \]
\end{lemma}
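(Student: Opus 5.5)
We prove that the Piola transform $\mathcal{J}_{\delta}$ is an isomorphism $W^{k,p}(\Omega)\to W^{k,p}(\Omega_\delta)$ that preserves divergence-free fields and lateral periodicity. We make the diffeomorphism explicit. Take $\psi_\delta(x,y,z):=(x,y,z(1+\delta(x,y)))$, which maps $\Omega$ onto $\Omega_\delta$. Its Jacobian matrix is
\[
\nabla\psi_\delta=\begin{pmatrix}1&0&0\\0&1&0\\ z\partial_1\delta & z\partial_2\delta & 1+\delta\end{pmatrix},\qquad \det\nabla\psi_\delta=1+\delta .
\]
Since $\|\delta\|_{L^\infty}<\kappa<1$, the determinant is bounded below by $1-\kappa>0$. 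The inverse $\psi_\delta^{-1}(x,y,z)=(x,y,z/(1+\delta))$ is explicit and has the same regularity. Both maps commute with integer translations in $(x,y)$, because $\delta$ is defined on $\mathbb{T}^2$; hence composition with them preserves space-periodicity. They also fix $z=0$, so a vanishing trace on the bottom is preserved.

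\textbf{Step 1: boundedness.} All entries of $A:=\nabla\psi_\delta(\det\nabla\psi_\delta)^{-1}$ belong to $C^1(\overline\Omega)$, with norms controlled by $\|\delta\|_{C^2}$ and $(1-\kappa)^{-1}$. Multiplication by $A$ is therefore bounded on $W^{k,p}(\Omega)$ for $k=0,1$. It remains to handle composition with $\psi_\delta^{-1}$. By the change of variables formula with Jacobian $(1+\delta)^{-1}$ and the chain rule (which involves $\nabla\psi_\delta^{-1}\in C^1$), composition with $\psi_\delta^{-1}$ is bounded from $W^{k,p}(\Omega)$ to $W^{k,p}(\Omega_\delta)$. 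Composition with $\psi_\delta$ is bounded in the reverse direction, the constants depending only on $\|\delta\|_{C^2}$, $\kappa$ and $p$.

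\textbf{Step 2: inverse.} The inverse is given explicitly by $\mathcal{J}_\delta^{-1}\mathbf{h}=(\det\nabla\psi_\delta)(\nabla\psi_\delta)^{-1}(\mathbf{h}\circ\psi_\delta)$. The cofactor matrix $(\nabla\psi_\delta)^{-1}$ is also $C^1$, with determinant bounded away from zero. By the same argument as in Step 1, $\mathcal{J}_\delta^{-1}$ is bounded. This shows that $\mathcal{J}_\delta$ is an isomorphism.

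\textbf{Step 3: divergence preservation.} This is the only step with genuine content. We use the Piola identity $\operatorname{div}_y\bigl(\mathcal{J}_\delta\mathbf{g}\bigr)(\psi_\delta(x))=(\det\nabla\psi_\delta(x))^{-1}\operatorname{div}_x\mathbf{g}(x)$. It follows from the standard fact that the cofactor matrix $\operatorname{cof}\nabla\psi$ is row-wise divergence free; equivalently, one may verify it by testing against $\varphi\in C_c^\infty(\Omega_\delta)$ and changing variables:
\[
\int_{\Omega_\delta}\mathcal{J}_\delta\mathbf{g}\cdot\nabla\varphi\,dy=\int_\Omega \nabla\psi_\delta\,\mathbf{g}\cdot(\nabla\psi_\delta)^{-T}\nabla_x(\varphi\circ\psi_\delta)\,dx=\int_\Omega\mathbf{g}\cdot\nabla_x(\varphi\circ\psi_\delta)\,dx .
\]
The Jacobian factor from the change of variables cancels the factor $(\det\nabla\psi_\delta)^{-1}$ in the definition, which gives the identity. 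Because this is a weak formulation, it also handles $k=0$, where the divergence is understood distributionally. Periodic test functions in $(x,y)$ are allowed as well, since $\psi_\delta$ respects the torus structure. Consequently $\operatorname{div}\mathbf{g}=0$ if and only if $\operatorname{div}\mathcal{J}_\delta\mathbf{g}=0$.

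\textbf{Main obstacle.} The main subtlety is the regularity bookkeeping in Step 1. For $k=1$, $\nabla(\mathcal{J}_\delta\mathbf{g})$ contains $\nabla A$, which involves $\nabla^2\delta$. The hypothesis $\delta\in C^2$ is therefore exactly what keeps it bounded, and the constants blow up only as $\kappa\to1$ or as $\|\delta\|_{C^2}\to\infty$. The time-dependent variant used earlier in the paper needs $\delta\in C^3$, which is supplied there by the regularization $\mathcal{R}_\sigma$.
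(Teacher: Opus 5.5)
Your proof is correct and is essentially the ``elementary computation'' the paper invokes without details. Your explicit choice $\psi_\delta(x,y,z)=(x,y,z(1+\delta(x,y)))$ matches the relation $\det\nabla\psi_{\eta}=1+\eta$ used later in the paper's compactness argument. One slip is terminological only: $(\nabla\psi_\delta)^{-1}$ is not the cofactor matrix. It is $(\det\nabla\psi_\delta)(\nabla\psi_\delta)^{-1}$ that equals the adjugate (the transposed cofactor matrix), and this does not affect Step 2.
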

\begin{proof}
Follows by elementary computations.
\end{proof}

% \begin{proof} It can be shown that 
% $\partial_{y}\mathcal{J}_{\delta}\varphi\circ\psi_{\delta}=\left(\det d\psi_{\delta}\right)^{-1}\partial_{i}\varphi\ $ and thus $\text{div}\mathcal{J}_{\delta}\varphi\circ\psi_{\delta}=\left(\det d\psi_{\delta}\right)^{-1}\text{div}\varphi$.
% \end{proof}

\subsection{The trace operator}
In order to  rigorously justify the traces  of  functions  defined on $\Omega_{\eta(t)}$ we use 
\begin{lemma}\label{lm:trace}  If $1 < p \le \infty$ and $\eta \in H^{2}(\omega)$ with $\left\Vert \eta\right\Vert _{L^{\infty}\left(\omega\right)}<\kappa$, then for any $r \in (1,p)$ the mapping \[\text{tr}_{\eta}:W^{1,p}\left(\Omega_{\eta}\right)\mapsto W^{1-\frac{1}{r},r}\left(\omega\right),\quad\text{tr}_{\eta}\left(v\right):=\left(v\circ\psi_{\eta}\right)_{|\omega}\] is well defined and continuous, with continuity constant depending on $\Omega, r, p$ and a bound for $\left\Vert \eta\right\Vert _{H^{2}\left(\omega\right)}$ and $\kappa$.
\end{lemma}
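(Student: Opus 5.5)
The statement to prove is Lemma~\ref{lm:trace}. The plan is to reduce to a fixed reference domain by pulling back along the flattening diffeomorphism $\psi_\eta:\Omega\to\Omega_\eta$, for instance $\psi_\eta(x,y,z)=(x,y,z(1+\eta(x,y)))$ (cut off near $z=0$ if needed so that the bottom is fixed), and then to apply the classical trace theorem on the flat slab $\Omega=\omega\times(0,1)$. Since $\text{tr}_\eta v=(v\circ\psi_\eta)|_{z=1}$ and the classical trace maps $W^{1,r}(\Omega)\to W^{1-1/r,r}(\omega)$ continuously, it suffices to show that $v\mapsto v\circ\psi_\eta$ is bounded from $W^{1,p}(\Omega_\eta)$ into $W^{1,r}(\Omega)$ for every $r<p$. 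The constant must depend only on $\Omega,r,p$, on $\kappa$ and on a bound for $\|\eta\|_{H^2(\omega)}$.

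The key steps, in order, are as follows.

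First, the Jacobian: we have $\det\nabla\psi_\eta=1+\eta\ge 1-\kappa>0$, and $\|\eta\|_{L^\infty}\lesssim\|\eta\|_{H^2}$. The change of variables therefore gives $\|v\circ\psi_\eta\|_{L^p(\Omega)}\lesssim_\kappa\|v\|_{L^p(\Omega_\eta)}$. The upper bound $1+\eta\le 1+\kappa$ is only needed for the inverse map.

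Second, the chain rule: $\nabla(v\circ\psi_\eta)=(\nabla v\circ\psi_\eta)\,\nabla\psi_\eta$. The only unbounded entries of $\nabla\psi_\eta$ are $z\,\partial_i\eta$. In two dimensions, $H^2(\omega)\hookrightarrow W^{1,s}(\omega)$ for every $s<\infty$, but not for $s=\infty$, so $\nabla\psi_\eta$ is not Lipschitz-bounded. We handle this with Hölder's inequality, choosing $1/r=1/p+1/s$:
\[
\|\nabla(v\circ\psi_\eta)\|_{L^r(\Omega)}\lesssim\|\nabla v\circ\psi_\eta\|_{L^p(\Omega)}\bigl(1+\|\nabla\eta\|_{L^s(\omega)}\bigr)\lesssim_{\kappa}\|\nabla v\|_{L^p(\Omega_\eta)}\bigl(1+c_s\|\eta\|_{H^2(\omega)}\bigr),
\]
where the second inequality uses the Jacobian bound again. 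Every $r<p$ is reachable because $s$ can be taken arbitrarily large. For $p=\infty$ the same argument works with any finite $r$.

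Third, periodicity: in the $(x,y)$ directions the pulled-back function inherits space-periodicity, so the trace on $\omega\times\{1\}$ is the standard one for the torus slab.

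Finally, density: the estimates of the first two steps are proved first for smooth $v$, which are dense in $W^{1,p}(\Omega_\eta)$ for $p<\infty$ (the domain $\Omega_\eta$ is Hölder, being the subgraph of a continuous $\eta$; alternatively one pulls back first and approximates on $\Omega$). They then extend by continuity, so $\text{tr}_\eta$ is well defined and the composition with the classical trace gives the claimed continuity.

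The main obstacle is the loss of integrability caused by $\nabla\eta\notin L^\infty$. This is exactly why the statement is only true for $r<p$, and the Hölder/Sobolev balancing in the second step is the point that has to be done carefully. A secondary subtlety is justifying the chain rule and the change of variables for the merely $W^{1,s}$ (not Lipschitz) map $\psi_\eta$. I would first approximate $\eta$ by smooth $\eta_k\to\eta$ in $H^2$ with $\|\eta_k\|_\infty\le\kappa$, prove the uniform estimates for $\psi_{\eta_k}$, and then pass to the limit.
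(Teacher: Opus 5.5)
Your proposal is correct and is essentially the argument behind the paper's proof, which simply cites \cite[Corollary 2.9]{LR14}. That argument is the one you give: pull back along $\psi_\eta$, lose a little integrability through H\"older's inequality with $\nabla\eta\in L^s(\omega)$ for every $s<\infty$, and apply the classical trace theorem on the flat slab. The only loose end is $p=\infty$, where your density step is unavailable; there you can use $W^{1,\infty}(\Omega_\eta)\hookrightarrow W^{1,q}(\Omega_\eta)$ for some finite $q\in(r,\infty)$ and invoke the finite case.
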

  \begin{proof}
    See e.g.\cite[Corollary 2.9]{LR14}
  \end{proof}
\subsection{The Bogovskii operator}
\begin{theorem}\label{thm-chap04:Bogovskii} Let $\Omega \subset \mathbb{R}^{N}$ be a bounded Lipschitz domain. Then for all $m \ge 0$ and all $1<q<\infty$ there exists a linear and continuous operator 
\[B:\left\{ f\in W_{0}^{m,q}\left(\Omega\right):\int_{\Omega}f=0\right\} \mapsto\left[W_{0}^{m+1,q}\left(\Omega\right)\right]^{N}\quad\text{and}\ \begin{cases}
\text{div}Bf=f & \text{in}\ \Omega\\
Bf=0 & \text{on}\ \partial\Omega
\end{cases}\]
\end{theorem}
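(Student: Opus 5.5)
We will follow Bogovski\u{\i}'s classical construction: first build an explicit integral operator on domains that are star-shaped with respect to a ball, then pass to general bounded Lipschitz domains by a finite decomposition. In the first step, assume $\Omega$ is star-shaped with respect to a ball $B$ with $\overline B\subset\Omega$. Fix $\varpi\in C_c^\infty(B)$ with $\int\varpi=1$ and set, for $f\in C_c^\infty(\Omega)$ with $\int_\Omega f=0$,
\[
Bf(x):=\int_\Omega f(y)\,(x-y)\int_1^\infty \varpi\Bigl(y+r(x-y)\Bigr)r^{N-1}\,dr\,dy .
\]
By star-shapedness, the support of $Bf$ lies in the union of segments joining $\operatorname{supp} f$ to $B$. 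This union is a compact subset of $\Omega$, so $Bf\in C_c^\infty(\Omega)$ and in particular $Bf=0$ on $\partial\Omega$.

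A direct computation, differentiating under the integral and integrating by parts in $r$, gives $\operatorname{div}Bf=f-\varpi\int_\Omega f=f$. The zero-mean condition enters exactly here.

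For the estimates, we write $\partial_j (Bf)_i$ as the sum of a bounded multiplication term $f(x)\int\ldots$ and a principal-value singular integral $\mathrm{p.v.}\int K_{ij}(x,x-y)f(y)\,dy$. The kernel $K_{ij}(x,z)$ is homogeneous of degree $-N$ in $z$, has vanishing mean on spheres, and is smooth and bounded in $x$. The Calder\'on--Zygmund theorem for such variable kernels then yields $\|\nabla Bf\|_{L^q}\lesssim\|f\|_{L^q}$ for $1<q<\infty$, and Poincar\'e's inequality (using compact support) gives the full $W^{1,q}_0$ bound.

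For $m\ge1$, we commute derivatives through the formula. Since $\partial_x^\alpha$ acting on the kernel can be transferred, after the change of variables $z=x-y$, to derivatives of $f$ plus derivatives of $\varpi$, we get $\partial^\alpha Bf=\sum_{\beta\le\alpha}B_{\alpha\beta}(\partial^\beta f)$, where the operators $B_{\alpha\beta}$ are of the same Bogovski\u{\i} type. Hence $\|Bf\|_{W^{m+1,q}}\lesssim\|f\|_{W^{m,q}_0}$. Density of $C_c^\infty$ mean-zero functions in $\{f\in W^{m,q}_0:\int f=0\}$ then extends $B$ continuously.

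In the second step, let $\Omega$ be a general bounded Lipschitz domain. Its boundary is locally the graph of a Lipschitz function, so $\Omega$ is a finite union $\Omega=\bigcup_{i=1}^k\Omega_i$ of domains $\Omega_i$, each star-shaped with respect to a ball $B_i$. Ordering the pieces suitably (connectedness; if $\Omega$ is disconnected we work componentwise), we decompose $f=\sum_i f_i$ with $f_i\in W^{m,q}_0(\Omega_i)$, $\int f_i=0$ and $\|f_i\|_{W^{m,q}}\lesssim\|f\|_{W^{m,q}}$. This uses a partition of unity $\{\chi_i\}$ subordinate to the cover together with mean-correcting terms $c_i\varpi_i$, where the $\varpi_i$ are bumps supported in the overlaps $\Omega_i\cap\Omega_{i+1}$. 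We then set $Bf:=\sum_i B_{\Omega_i}f_i$, extended by zero, which inherits linearity, continuity and the boundary condition.

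The main obstacle is the Calder\'on--Zygmund estimate in the first step. It requires verifying carefully that the kernel has the cancellation property, and that the $x$-dependence is uniformly smooth so that the variable-kernel version of the theorem applies. The bookkeeping for higher derivatives ($m\ge1$) is the second delicate point, since a naive differentiation of the kernel would produce non-integrable singularities.
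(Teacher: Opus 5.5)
Your proposal is correct and follows the classical Bogovskii construction that the paper relies on when it cites \cite{Bog} and \cite[Section 3.3]{Galdi-book}: the explicit integral operator on domains star-shaped with respect to a ball, the Calder\'on--Zygmund bound for the gradient, commuting derivatives onto $f$ and $\varpi$ for $m\ge 1$, and gluing over a finite star-shaped cover. There are two small corrections in the gluing step. First, a connected cover cannot always be ordered so that consecutive pieces overlap (e.g. one piece meeting three mutually disjoint ones), so place the corrector bump for $\Omega_i$ in $\Omega_i\cap\bigcup_{j>i}\Omega_j$ instead, which connectedness does guarantee after relabeling. Second, drop the remark about working componentwise on a disconnected $\Omega$: there the mean of $f$ need not vanish on each component and the statement is false, so connectedness (implicit in ``domain'') is genuinely required.
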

\begin{proof}
    See e. g.  \cite{Bog} or \cite[Section 3.3]{Galdi-book}.
\end{proof}  
\subsection{Divergence-free extension operators}

Following \cite{Gr05, MS22} we extend each function $\xi:W^{1,1}\left(\omega\right)\mapsto\mathbb{R},\quad\int_{\omega}\xi dA=0$ to a divergence-free vector-valued one, with zero boundary values on the bottom $\Gamma_b = \omega\times\{0\}$. In the periodic setting $\omega=\mathbb{T}^{2}$ no boundary condition on $\partial\omega$ is needed. More precisely for a sufficiently large $M>0$ and $\Omega_{M}:=\omega\times\left(0,M\right)$ such that $\Omega_{\eta\left(t\right)}\subset \Omega_M$ for all $t\in I$
it holds the following result.
\begin{proposition}\label{chap-appendix-prop-extension-plate}
Let 
\[
\eta\in L^{\infty}\left(I;L_{x}^{\infty}\right),\quad\left\Vert \eta\right\Vert _{L_{t,x}^{\infty}}\le\kappa<1.
\]
There exists an operator 
\begin{equation}
\mathcal{F}_{\eta}:\left\{ \xi\in W^{1,1}\left(\omega\right),\int_{\omega}\xi dA=0\right\} \mapsto W_{\text{div}}^{1,1}\left(\Omega_{M=2}\right)
\end{equation}
such that $\text{div}_{x}\mathcal{F}_{\eta}\xi=0$ and $\left(\mathcal{F}_{\eta}\xi,\xi\right)\in V_{T}^{\eta}$ and 
with the property
\[
\left\Vert \mathcal{F}_{\eta}\xi\right\Vert _{W_{t}^{1,p}W_{x}^{k,q}}\lesssim\left\Vert \xi\right\Vert _{W_{t}^{1,p}W_{x}^{k,q}}.
\]
The operator can be extended to $L^2(\omega)$ by approximation with smooth functions.
A general function $\xi\in W^{1,1}\left(\omega\right)$ can be made  mean-value free via the operator
\begin{equation}
    \mathcal{M}_{\eta}\xi:=\xi-\psi\int_{\omega}\xi dA,\quad\psi\in C^{\infty}\left(\omega;\mathbb{R}\right),\quad\int_{\omega}\psi dA=1.
\end{equation}
\end{proposition}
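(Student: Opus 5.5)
The plan is to give an explicit construction on the fixed box $\Omega_{2}=\omega\times(0,2)$ that does not depend on $\eta$ beyond the bound $\|\eta\|_{L^\infty_{t,x}}\le\kappa$. Fix a cutoff $\beta\in C^\infty([0,2])$ with $\beta\equiv 0$ near $z=0$ and $\beta\equiv 1$ on $[1-\kappa,1+\kappa]$; this is possible since $\kappa<1$. Given $\xi$ with $\int_\omega\xi\,dA=0$, let $\varphi$ be the unique mean-value-free solution of $\Delta_{x}\varphi=\xi$ on $\omega=\mathbb{T}^2$. This problem is solvable precisely because $\xi$ has zero mean, which is where that hypothesis enters. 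For time-dependent $\xi$ we solve it for each $t$. Then set
\[
\mathcal{F}_{\eta}\xi(t,x,z):=\bigl(-\beta'(z)\nabla_{x}\varphi(t,x),\ \beta(z)\xi(t,x)\bigr).
\]

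The verification has three steps. First, divergence-freeness is a direct computation:
\[
\operatorname{div}\mathcal{F}_\eta\xi=-\beta'(z)\Delta_x\varphi+\beta'(z)\xi=0 .
\]
Second, the boundary conditions. The field vanishes near $z=0$ because $\beta$ does. On $z=1+\eta(t,x)\in[1-\kappa,1+\kappa]$ we have $\beta=1$ and $\beta'=0$, so $\operatorname{tr}_\eta\mathcal{F}_\eta\xi=\xi\mathbf{e}_3$. The field is periodic in $x$ by construction. Hence $(\mathcal{F}_\eta\xi,\xi)\in V^\eta_T$, restricting to $\Omega_{\eta(t)}\subset\Omega_2$. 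Third, the estimates. The map $\xi\mapsto\nabla_x\varphi$ is a Fourier multiplier of order $-1$ on $\mathbb{T}^2$, so elliptic regularity gives
\[
\|\nabla_x\varphi\|_{W^{k+1,q}(\omega)}\lesssim\|\xi\|_{W^{k,q}(\omega)},\qquad 1<q<\infty .
\]
Since $\beta$ is a fixed smooth function of $z$, every $x$- and $z$-derivative of $\mathcal{F}_\eta\xi$ is controlled by $\|\xi\|_{W^{k,q}_x}$. Because the construction is linear and independent of $t$ (except through $\xi$), $\partial_t$ commutes with it, and the $W^{1,p}_t$ part of the estimate follows.

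The main obstacle is the endpoint $q=1$ appearing in the statement ($W^{1,1}$): Riesz-transform bounds fail in $L^1$. I would prove the estimates for $1<q<\infty$, where the bound even gains a derivative in the horizontal components. Membership in $W^{1,1}(\Omega_2)$ for $\xi\in W^{1,q}$, $q>1$, is then immediate since $\Omega_2$ is bounded. The extension to the larger classes (including $L^2(\omega)$, as stated) is by density of smooth mean-free functions and continuity of the linear operator in the relevant norms. Finally, $\mathcal{M}_\eta\xi=\xi-\psi\int_\omega\xi\,dA$ is mean-free by the normalisation $\int_\omega\psi\,dA=1$. It is bounded in every $W^{k,q}$ because $\psi$ is smooth, so the composition $\mathcal{F}_\eta\mathcal{M}_\eta$ is defined on general $\xi$, which is how it is used in \eqref{eqn:test-funct-eta}.
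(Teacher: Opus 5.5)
Your proof is correct. It shares its first step with the paper but corrects the divergence in a different way.

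The paper starts from the same vertical lift $\xi(x)\sigma(z)\mathbf{e}_3$ (its $\sigma$ is your $\beta$). It notes that the divergence error $\sigma'(z)\xi(x)$ is supported in the strip $S=\omega\times\bigl(\tfrac{1-\kappa}{2},1-\kappa\bigr)$ and has zero integral there because $\xi$ is mean-free. It then subtracts the Bogovskii corrector $\text{Bog}\,(\sigma'\xi)$, which vanishes on $\partial S$. You instead write the corrector explicitly as $-\beta'(z)\nabla_x\Delta_x^{-1}\xi$, so the mean-zero hypothesis enters through solvability of the Poisson problem on $\mathbb{T}^2$ rather than through the Bogovskii compatibility condition.

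In both versions the corrector vanishes on the slab $[1-\kappa,1+\kappa]$. Hence the trace is $\xi\mathbf{e}_3$ for every admissible $\eta$, and the operator is linear, independent of $\eta$, and commutes with $\partial_t$.

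What each route buys:
\begin{itemize}
\item The Bogovskii route uses no structure of $\omega$ and would carry over to non-periodic cross-sections. However, the theorem it cites is for bounded Lipschitz domains with data in $W^{m,q}_0$. So obtaining the $W^{k,q}$ bounds with $k\ge2$ on the periodic strip tacitly requires a periodic variant of that theorem.
\item Your Fourier-multiplier formula is tailored to $\omega=\mathbb{T}^2$. Periodicity is automatic, and all $W^{k,q}$ bounds for $1<q<\infty$ follow at once from elliptic regularity on the torus.
\end{itemize}

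Your worry about $q=1$ is unnecessary when $k\ge1$. The horizontal components involve $\nabla_x^{j+1}\varphi$ with $0\le j\le k$. For $j\ge1$ this is the $L^2$-bounded operator $\nabla_x^2\Delta_x^{-1}$ applied to $\nabla_x^{j-1}\xi\in W^{1,1}(\mathbb{T}^2)\hookrightarrow L^2(\mathbb{T}^2)$, so it lies in $L^2\subset L^1$; the case $j=0$ is easier. Thus your operator does map mean-free $W^{1,1}(\omega)$ into $W^{1,1}(\Omega_2)$, as the statement asserts. The same embedding argument applies to the paper's corrector, since Bogovskii also gains one derivative.
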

\begin{proof}
We consider a smooth function 
    \[
\sigma\in C^{\infty}\left(\mathbb{R};\mathbb{R}_{+}\right),\quad\sigma\left(z\right)=\begin{cases}
1 & z\in\left(1-\kappa,1+\kappa\right)\\
0 & z\in\left(0,\frac{1-\kappa}{2}\right)
\end{cases}
\]
and define 
\begin{equation}
\overline{\mathcal{F}_{\eta}}\left(\xi\right)\left(x,y,z\right):=\xi\sigma\left(z\right)\mathbf{e}_{3}=\left(0,0,\xi\left(x,y\right)\sigma\left(z\right)\right).
\end{equation}

We see that 
\[
\text{div}\overline{\mathcal{F}_{\eta}}\xi\left(x,y,z\right)=\sigma^{\prime}\left(z\right)\xi\left(x,y\right)\chi_{\left\{ \frac{1-\kappa}{2}<z<1-\kappa\right\}}.
\] 

We need to correct  $\text{div}\overline{\mathcal{F}_{\eta}}\xi\left(x,y,z\right)$  when $\frac{1-\kappa}{2}<z<1-\kappa$. 
For this, we consider the Bogovskii operator $\text{Bog}$ from Theorem\ref{thm-chap04:Bogovskii}
and we consider

\[
\mathcal{F}_{\eta}\xi:=\overline{\mathcal{F}_{\eta}}\xi-\text{Bog}\ \text{div}\overline{\mathcal{F}_{\eta}}\xi.
\]
 This is compatible with the definition of the Bogovskii operator since 
 \[
 \int_{\omega\times\left(\frac{1-\kappa}{2}<z<1-\kappa\right)}\text{div}\overline{\mathcal{F}_{\eta}}\xi dx=\int_{\omega}\xi\left(x,y\right)dA\cdot\left.\sigma\left(z\right)\right|_{z=\frac{1-\kappa}{2}}^{z=1-\kappa}=0.
 \]
\end{proof}

\subsection{A regularizing operator}\label{ssec:regularizing-op}
\begin{lemma}
\label{lm:regularizers}
There exists an operator \[\mathcal{R}_{\varepsilon}:C\left(I\times\omega\right)\mapsto C^{4}\left(I\times\omega\right)\] such that, for all  $\delta\in C\left(I\times\omega\right)$ it holds:
\begin{enumerate}[(a)]
\item $\mathcal{R}_{\varepsilon}\delta\to\delta$ uniformly as $\varepsilon \to 0$;
\item $\mathcal{R}_{\varepsilon}:L^{2}\left(0,T;H^{2}\left(\omega\right)\right)\mapsto L^{2}\left(0,T;H^{2}\left(\omega\right)\right)$ and $\mathcal{R}_{\varepsilon}\delta\to\delta$ in $L^{2}\left(0,T;H^{2}\left(\omega\right)\right)$ as $\varepsilon \to 0$;
\item If $\partial_{t}\delta\in L^{p}\left(\left(0,T\right)\times\omega\right)$ then $\partial_{t}\mathcal{R}_{\varepsilon}\delta=\mathcal{R}_{\varepsilon}\partial_{t}\delta\to\partial_{t}\delta$ in $L^{p}\left(\left(0,T\right)\times\omega\right)$ as $\varepsilon \to 0$;
\item If $\delta\in C^{\gamma}\left(\left(0,T\right)\times\omega\right)$ for some $\gamma \in (0,1)$ then $\mathcal{R}_{\varepsilon}\delta\to\delta$ in $C^{\gamma}\left(\left(0,T\right)\times\omega\right)$ as $\varepsilon \to 0$;
\item $\left\Vert \mathcal{R}_{\varepsilon}\delta\right\Vert _{L^{\infty}\left(\left(0,T\right)\times\omega\right)}\le\left\Vert \delta\right\Vert _{L^{\infty}\left(\left(0,T\right)\times\omega\right)}$.
\end{enumerate}
\end{lemma}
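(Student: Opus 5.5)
The statement to prove is Lemma~\ref{lm:regularizers}: the existence of a regularizing operator $\mathcal{R}_{\varepsilon}$ on $C(I\times\omega)$ with properties (a)--(e). I will build it by convolution in space and time with nonnegative mollifiers of unit mass. Since $\omega=\mathbb{T}^{2}$ has no boundary, spatial mollification is straightforward. In time, $I=[0,T]$ has endpoints, so the first step is to extend $\delta$ to all of $\mathbb{R}$. For time-periodic $\delta$ (the case actually used, after $\mathcal{P}_{\varepsilon}$) I extend $T$-periodically; in general I use the even reflection $\delta(-t)=\delta(t)$, $\delta(T+t)=\delta(T-t)$, repeated. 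Reflection keeps continuity and the $L^{\infty}$ norm. It also keeps membership in $L^{p}_{t}X$ and $W^{1,p}_{t}X$, with the derivative reflected with a sign, and it keeps $C^{\gamma}$ regularity. I then set
\[
\mathcal{R}_{\varepsilon}\delta:=(\bar\delta\ast_{t,x}\theta_{\varepsilon})\big|_{I\times\omega},\qquad \theta_{\varepsilon}(t,x)=\varepsilon^{-3}\theta(t/\varepsilon,x/\varepsilon),
\]
where $\bar\delta$ is the extension and $\theta\ge 0$ is smooth, compactly supported, with $\int\theta=1$. Smoothness of the kernel gives $\mathcal{R}_{\varepsilon}\delta\in C^{\infty}\subset C^{4}$.

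The properties then follow in order.
\begin{enumerate}[(a)]
\item $\bar\delta$ is uniformly continuous on $[-1,T+1]\times\omega$, which gives uniform convergence.
\item Spatial derivatives commute with convolution, and Young's inequality gives $\|\mathcal{R}_{\varepsilon}\delta\|_{L^{2}H^{2}}\lesssim\|\delta\|_{L^{2}H^{2}}$. Convergence follows from density of smooth functions plus this uniform bound.
\item The formula $\partial_{t}\mathcal{R}_{\varepsilon}\delta=\mathcal{R}_{\varepsilon}\partial_{t}\delta$ needs care with the reflection. The weak time derivative of the reflected function is the reflected-with-sign derivative, and convolution commutes with it. So $\partial_{t}(\bar\delta\ast\theta_{\varepsilon})=\overline{\partial_{t}\delta}\ast\theta_{\varepsilon}$, and $L^{p}$ convergence is the standard mollifier result. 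In the periodic case there is no sign issue at all.
\item Convolution does not increase the $C^{\gamma}$ seminorm. However, convergence in $C^{\gamma}$ norm fails for general $C^{\gamma}$ functions. I will therefore prove it for the little-Hölder class, or for $\delta\in C^{\gamma'}$ with $\gamma'>\gamma$, by interpolating between uniform convergence and the bounded $C^{\gamma'}$ seminorm. I will note that this suffices for the application, since $\eta\in C^{0,1-\theta}$ for every $\theta\in(1/2,1)$ by Remark~\ref{remark: Holder-cont}.
\item Since $\theta_{\varepsilon}\ge0$ has unit mass and reflection preserves the sup, $\|\mathcal{R}_{\varepsilon}\delta\|_{L^{\infty}}\le\|\delta\|_{L^{\infty}}$.
\end{enumerate}

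The main obstacle is (d): literal convergence in $C^{\gamma}$ norm is false in general, so the statement has to be read in the sense just described. A lesser point is the boundary behaviour in time in (c), which the reflection or periodic extension handles.
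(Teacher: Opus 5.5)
Your construction is correct. It is the standard space--time mollification, which the paper does not carry out itself: its proof of Lemma~\ref{lm:regularizers} only points to the literature, so there is no competing argument to compare against. Your diagnosis of (d) is right, and it can be sharpened: the literal statement fails for \emph{every} operator with values in $C^{1}$, not only for mollifiers. Indeed, for $\delta(t,x)=|t-T/2|^{\gamma}$ and any $f\in C^{1}(I\times\omega)$,
\[
\frac{\left|(\delta-f)(T/2+h,x)-(\delta-f)(T/2,x)\right|}{|h|^{\gamma}}\ge 1-\left\Vert \partial_{t}f\right\Vert _{\infty}|h|^{1-\gamma}\xrightarrow{h\to0}1,
\]
so $\|\delta-f\|_{C^{0,\gamma}}\ge 1$. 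Your replacement (convergence in $C^{0,\beta}$ for $\beta<\gamma$, or in $C^{0,\gamma}$ on the little H\"older space) is therefore the only provable version.

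Two minor points about your case distinction. First, in the reflected branch $\partial_t\mathcal{R}_\varepsilon\delta$ is the mollification of the \emph{odd} extension of $\partial_t\delta$. So the identity $\partial_t\mathcal{R}_\varepsilon\delta=\mathcal{R}_\varepsilon\partial_t\delta$ holds only if $\mathcal{R}_\varepsilon$ acts differently on derivatives; only the convergence part of (c) survives there. Second, an operator defined by cases (periodic versus non-periodic input) is discontinuous on $C(I\times\omega)$ at periodic $\delta$: non-periodic $\delta_k\to\delta$ give even-reflection outputs that do not converge to the periodic-extension output near $t=0$.

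Neither point affects the paper, because $\mathcal{R}_\sigma$ is only ever applied to the periodised, truncated $\delta_{n,\varepsilon}$. It is cleaner to state the lemma on $T$-periodic functions and use only the periodic extension. That operator has four useful properties:
\begin{itemize}
\item it is linear and bounded from $C^0$ into every $C^k$, which the continuity of $\mathcal{T}$ in the Leray--Schauder step needs;
\item it commutes exactly with $\partial_t$;
\item it takes values in $C^{\infty}_{\text{per}}$;
\item in particular it preserves $\delta(0)=\delta(T)$, on which the comparison of $E_n(0)$ and $E_n(T)$ rests.
\end{itemize}
An even reflection would destroy that last property.
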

\begin{proof}
   See e.g. \cite{LR14},\cite{BS18}.
\end{proof}
% We will also regularize functions $\mathbf{v} \in L^{2} ((0,T) \times \mathbb{R}^{3})$ by convolution with standard mollifying kernels. Thus, if $\psi_\varepsilon$ denote the standard time-space mollification kernels, we set \[\mathcal{R}_{\varepsilon}\mathbf{v}\left(t,x\right):=\int_{\mathbb{R}^{3+1}}\psi_{k}\left(t-s,x-y\right)\chi_{\left(0,T\right)\times\Omega_{\mathcal{R}_{\varepsilon}\delta}}\left(s,y\right)\mathbf{v}\left(s,y\right)dsdy.\]

\subsection{ Sobolev embedding on moving domains}
\begin{lemma}
\label{lm:Sobolev-embedding}
If $1 < p<3$ and $\eta \in H^{2}(\omega)$ with $\left\Vert \eta\right\Vert _{L^{\infty}\left(\omega\right)}<\kappa$ then \[W^{1,p}\left(\Omega_{\eta}\right)\hookrightarrow L^{p^{*}}\left(\Omega_{\eta}\right)\ \text{with} \   p^{*}:=\frac{3p}{3-p}\] and the embedding constant depends on $\Omega$, $p$ and a bound for $\left\Vert \eta\right\Vert _{H^{2}\left(\omega\right)}$.
%and $\left\Vert \eta\right\Vert _{L^{\infty}\left(\omega\right)}$
\end{lemma}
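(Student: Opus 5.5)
We reduce the statement to the fixed, flat reference domain via the change of variables $\psi_\eta:\Omega\to\Omega_\eta$ and then invoke the classical Sobolev embedding on $\Omega$. Concretely, we take the standard vertical rescaling
\[
\psi_{\eta}(x,y,z):=\bigl(x,y,(1+\eta(x,y))z\bigr),\qquad (x,y,z)\in\Omega=\omega\times(0,1),
\]
which maps $\Omega$ bijectively onto $\Omega_\eta$. Its Jacobian determinant is $1+\eta\in[1-\kappa,1+\kappa]$, and the inverse is $\psi_\eta^{-1}(x,y,z')=(x,y,z'/(1+\eta))$.

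For $v\in W^{1,p}(\Omega_\eta)$ we set $\bar v:=v\circ\psi_\eta$. The change of variables gives $\|\bar v\|_{L^{q}(\Omega)}\simeq_{\kappa}\|v\|_{L^{q}(\Omega_\eta)}$ for every $q$, since the Jacobian is bounded above and below. For the gradient, the chain rule yields
\[
\nabla\bar v=(\nabla\psi_\eta)^{T}(\nabla v\circ\psi_\eta),
\]
whose entries involve $1+\eta$ and $z\nabla\eta$.

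\textbf{Main obstacle.} The difficulty is that $\eta\in H^2(\omega)$ does not give $\nabla\eta\in L^\infty$, only $\nabla\eta\in L^r(\omega)$ for every $r<\infty$. Hence the gradient bound loses integrability:
\[
\|\nabla\bar v\|_{L^{s}(\Omega)}\lesssim\bigl(1+\|\nabla\eta\|_{L^{r}(\omega)}\bigr)\|\nabla v\|_{L^{p}(\Omega_\eta)},\qquad \tfrac1s=\tfrac1p+\tfrac1r.
\]
Here we use Hölder's inequality together with the fact that $\nabla\eta$ is independent of $z$, so its $L^r(\Omega)$ norm equals its $L^r(\omega)$ norm.

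We then apply the flat Sobolev embedding $W^{1,s}(\Omega)\hookrightarrow L^{s^*}(\Omega)$, which holds on the Lipschitz, laterally periodic domain $\Omega$. Mapping back with the $L^q$ equivalence gives
\[
\|v\|_{L^{s^*}(\Omega_\eta)}\lesssim\|\bar v\|_{W^{1,s}(\Omega)}\lesssim\|v\|_{W^{1,p}(\Omega_\eta)}.
\]
Since $s<p$, this only yields the exponent $s^*<p^*$, with $s^*\to p^*$ as $r\to\infty$. So this route alone gives every exponent strictly below $p^*$, with constants depending on $\|\eta\|_{H^2}$ through $\|\nabla\eta\|_{L^r}\lesssim\|\eta\|_{H^2}$.

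To obtain the sharp exponent $p^*$ we avoid the change of variables altogether. We extend $v$ from $\Omega_\eta$ to the fixed box $\Omega_M=\omega\times(0,M)$ by reflection across the graph $z=1+\eta$:
\[
Ev(x,y,z):=v\bigl(x,y,2(1+\eta)-z\bigr)\quad\text{for } z>1+\eta.
\]
Its gradient again picks up a factor $\nabla\eta$; this is the same loss as before and must be avoided. The remedy is the standard one: since $\Omega_\eta$ is a subgraph domain whose boundary is Hölder continuous and in fact $W^{1,r}$ for all $r$, we use a regularized distance (Stein-type) extension adapted to the graph, or, more simply, we note that $\Omega_\eta$ satisfies a uniform cone condition. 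The cone condition follows because $H^2(\omega)\hookrightarrow C^{0,\alpha}(\omega)$ for every $\alpha<1$, and in particular $\eta$ is uniformly continuous with a modulus controlled by $\|\eta\|_{H^2}$. The cone condition suffices for the Gagliardo--Nirenberg--Sobolev inequality with the exact exponent $p^*$, with a constant depending only on the cone parameters, hence only on $\Omega$, $p$, $\kappa$ and a bound for $\|\eta\|_{H^2(\omega)}$.

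We expect the main obstacle to be exactly this last step: checking that the cone (or uniform $\varepsilon$-$\delta$) condition holds with constants controlled purely by $\|\eta\|_{H^2}$. We would first try Jones' $(\varepsilon,\delta)$ extension theorem. Should that fail, we would accept the near-sharp exponents $s^*<p^*$ from the change of variables above, which is all the paper actually uses (e.g. $H^1\hookrightarrow L^{6-}$ and $L^4$).
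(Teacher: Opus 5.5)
Your change-of-variables half is correct. With $\nabla\bar v=(\nabla\psi_\eta)^T(\nabla v\circ\psi_\eta)$, H\"older's inequality with $\nabla\eta\in L^r(\omega)$ and the flat embedding on $\Omega$ give $W^{1,p}(\Omega_\eta)\hookrightarrow L^q(\Omega_\eta)$ for every $q<p^*$. The constants depend on $\kappa$, $q$ and $\|\eta\|_{H^2(\omega)}$, and they degenerate as $q\to p^*$. The chain rule for the non-Lipschitz map $\psi_\eta$ needs a routine density justification. This is the same integrability loss that appears in the paper's trace Lemma~\ref{lm:trace} ($r<p$). The paper itself gives no argument for this lemma, only the citation \cite[Corollary~2.10]{LR14}, so there is no route of its own to compare against. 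Your argument does not reach the stated endpoint $p^*$, though, and the step you propose for the endpoint is where the proof breaks.

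The false step is the claim that H\"older continuity of $\eta$ (together with $\nabla\eta\in L^r$ for all $r<\infty$) yields a uniform cone condition. A counterexample: take $\eta(x)=c-|x-x_0|\log(1/|x-x_0|)$ near $x_0$, suitably continued.
\begin{itemize}
\item It lies in $C^{0,\alpha}$ for every $\alpha<1$ and in $W^{1,r}$ for every $r<\infty$.
\item The subgraph has an outward cusp at the peak. At depth $h$ below the tip, its horizontal section is a disc of radius $\approx h/\log(1/h)=o(h)$, so no cone of fixed aperture and height fits at nearby points.
\item The sharp embedding genuinely fails there. With $h$ the depth below the tip and $1/p^*\le\beta<1/p$, the function $v=h^{-(3-p)/p}\log^{\beta}(1/h)$ times a cutoff satisfies $|\nabla v|^p\,dV\approx h^{-1}\log^{\beta p-2}(1/h)\,dh$, which is integrable. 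But $|v|^{p^*}dV\approx h^{-1}\log^{\beta p^*-2}(1/h)\,dh$, which is not.
\end{itemize}
So the properties you invoke imply neither the cone condition nor the conclusion. The alternatives you mention do not rescue it either:
\begin{itemize}
\item Uniform continuity of the graph only gives the segment condition, i.e.\ density of smooth functions up to the boundary.
\item Stein's extension requires a Lipschitz graph.
\item Such cusps also violate Jones' $(\varepsilon,\delta)$-condition.
\end{itemize}

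The missing idea is to use the $H^2$ bound itself rather than its H\"older or Sobolev consequences; note that the $\eta$ above is not in $H^2$. By the scale invariance of $\dot H^2$ in two dimensions, on every disc $B_r(x_0)$ there is an affine $\ell$ with $\|\eta-\ell\|_{L^\infty(B_r(x_0))}\lesssim r\|\nabla^2\eta\|_{L^2(B_r(x_0))}$. Hence a peak dropping by at least $Mr$ on the whole circle of radius $r$ forces $M\lesssim\|\nabla^2\eta\|_{L^2(B_r(x_0))}$. The real work is turning this into a cone-, John- or $(\varepsilon,\delta)$-type condition with constants depending only on $\kappa$ and a bound for $\|\eta\|_{H^2}$. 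That is delicate, because the slopes of $\ell$ are not uniformly bounded: $H^2(\omega)\not\subset W^{1,\infty}(\omega)$.

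Your fallback to $q<p^*$ proves a strictly weaker lemma than the one stated. It is also not accurate that the paper never needs the endpoint. Section~\ref{sec:proof} explicitly uses $L^2_tL^6_x$ bounds for $\mathbf{u}_n$ and $\overline{\mathbf{u}}_n$, and the interpolation $L^\infty_tL^2_x\cap L^2_tL^6_x\hookrightarrow L^{8/3}_tL^4_x$. Those steps could plausibly be rerun with exponents below $6$, but that is a change to the paper, not a proof of this lemma.
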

\begin{proof}
    See e.g. \cite[Corollary 2.10]{LR14}.
\end{proof}
% \subsection{The Kakutani-Glicksberg-Fan  fixed point theorem}
% \begin{theorem}[Kakutani-Glicksberg-Fan]\label{thm: Kakutain-chap-appendix} 
% Let $C$ be a convex subset of a normed vector space $Z$ and let $F:C \to \mathcal{P}(C)$ be a  set-valued mapping which has closed graph. Moreover, let $F(C)$ be contained in a compact subset of $C$, and let $F(z)$ be non-empty, convex, and compact for all $z \in C$. Then $F$ possesses a fixed point, that is there is $c_0 \in C$ with $c_0 \in F(c_0)$. 
% \end{theorem}
% We say a set-valued mapping $F : C \mapsto \mathcal{P}(C)$ has \emph{closed graph} provided that the set $\left\{ \left(x,y\right):y\in F\left(x\right)\right\} $ is closed in $X \times Y$ with the product topology or that, equivalently, for any sequences $x_n \to x$ and $y_n \to y$ with $y_n \in F(x_n)$ for any $ \ge 1$ it follows that $y \in F(x)$.
% \begin{proof}
%     See e. g.  \cite[Chapter 2, Section 5.8]{GD03}.
% \end{proof}
\subsection{An approximation lemma}
  Let  
$H\left(\Omega_{\eta}\right):=\left\{ \mathbf{f}\in C^{\infty}\left(\Omega_{\eta}\right),\ \text{div }\mathbf{f}=0,\quad\mathbf{f}=\mathbf{0}\ \text{on}\ \Gamma_{b}\right\} ^{\left\Vert \cdot\right\Vert _{2}}$ for $\eta \in H^{2}(\omega)$. It holds the following
\begin{lemma}\label{lemma:approx-dual-appendix}
Let $s>0$ and $\varepsilon>0$. Then there is $\sigma_{\varepsilon}>0$ such that for all $\phi \in H(\Omega_{\eta})$ with $\left\Vert \phi\right\Vert _{L^{2}\left(\Omega_{\eta}\right)}\le1$ there is $\psi\in H\left(\Omega_{\eta}\right)$ with 
\[\text{supp} \ \psi\subset\Omega_{\eta-\sigma_{\varepsilon}},\quad\left\Vert \psi\right\Vert _{L^{2}\left(\Omega_{\eta}\right)}\le2,\quad\left\Vert \phi-\psi\right\Vert _{\left(H^{s}\left(\mathbb{R}^{3}\right)\right)^{\prime}}<\varepsilon.\]
\end{lemma}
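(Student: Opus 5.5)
The proof combines a compactness reduction with a cut-off near the moving top boundary. Three facts drive it. The unit ball of $H(\Omega_\eta)$ in $L^2$ is precompact in $(H^s(\mathbb{R}^3))'$ for every $s>0$. A divergence-free field with zero trace on $\Gamma_b$ carries zero net flux through every horizontal-type level surface. An $L^2$-bounded field living in a thin layer near $\Gamma_\eta$ has small dual norm. Without loss of generality I take $s<\tfrac12$, since $\|\cdot\|_{(H^{s'})'}\le\|\cdot\|_{(H^{s})'}$ for $s'\ge s$.

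\emph{Step 1 (reduction to finitely many smooth fields).} Extend fields by zero to $\mathbb{R}^3$. Since $L^2(\Omega_M)\hookrightarrow\hookrightarrow (H^s(\mathbb{R}^3))'$, the set $\{\phi\in H(\Omega_\eta):\|\phi\|_{L^2}\le1\}$ admits a finite $\varepsilon/3$-net $\phi_1,\dots,\phi_N$ in the dual norm. Using the definition of $H(\Omega_\eta)$ as an $L^2$-closure of smooth fields, I may take the $\phi_i$ smooth, divergence-free, vanishing on $\Gamma_b$, with $\|\phi_i\|_{L^2}\le 1+\varepsilon'$. It then suffices to find, for each $i$, some $\sigma_i>0$ and $\psi_i\in H(\Omega_\eta)$ with $\operatorname{supp}\psi_i\subset\Omega_{\eta-\sigma_i}$, $\|\psi_i\|_{L^2}\le2$ and $\|\phi_i-\psi_i\|_{(H^s)'}<\varepsilon/3$. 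I then set $\sigma_\varepsilon:=\min_i\sigma_i$; taking a smaller $\sigma$ only shrinks the support constraint, so each $\psi_i$ still qualifies. Given $\phi$, I choose $\psi:=\psi_i$ for the net point $\phi_i$ nearest to $\phi$.

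\emph{Step 2 (cut-off and correction for a fixed smooth field).} Flatten with the map $\psi_\eta:\Omega\to\Omega_\eta$ and let $\zeta_\sigma$ be a cut-off in the flattened vertical variable, equal to $1$ below height $1-2\sigma$ and to $0$ above $1-\sigma$. I define
\[
\psi_i:=\zeta_\sigma\phi_i-\mathbf{w}_\sigma,
\]
where $\mathbf{w}_\sigma$ solves $\operatorname{div}\mathbf{w}_\sigma=\nabla\zeta_\sigma\cdot\phi_i$ in the strip $S_\sigma$ where $\nabla\zeta_\sigma\neq0$, and vanishes on $\partial S_\sigma$; it is obtained from the Bogovskii operator (Theorem~\ref{thm-chap04:Bogovskii}) after transport by the Piola map of Lemma~\ref{lm:Piola}. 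The compatibility condition $\int_{S_\sigma}\nabla\zeta_\sigma\cdot\phi_i\,dx=0$ holds because $\operatorname{div}\phi_i=0$, $\phi_i|_{\Gamma_b}=0$ and $\omega=\mathbb{T}^2$ is periodic. Hence the flux of $\phi_i$ through each level surface in the strip vanishes, exactly as for the mean-value condition used in the proof of Proposition~\ref{chap-appendix-prop-extension-plate}. By construction $\psi_i$ is divergence-free, vanishes on $\Gamma_b$, and is supported in $\Omega_{\eta-\sigma}$ for an appropriate comparison of the flattened and physical heights.

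\emph{Step 3 (estimates).} Write $\phi_i-\psi_i=(1-\zeta_\sigma)\phi_i+\mathbf{w}_\sigma$, a field supported in a layer $L_\sigma$ of width $O(\sigma)$ below $\Gamma_\eta$. For $g\in H^s$ with $s<\tfrac12$, Hölder's inequality with $H^s\hookrightarrow L^{6/(3-2s)}$ gives $\|g\|_{L^2(L_\sigma)}\lesssim|L_\sigma|^{s/3}\|g\|_{H^s}$. Therefore
\[
\|\phi_i-\psi_i\|_{(H^s)'}\lesssim \sigma^{s/3}\bigl(\|\phi_i\|_{L^2}+\|\mathbf{w}_\sigma\|_{L^2}\bigr),
\]
and it remains to show that $\|\mathbf{w}_\sigma\|_{L^2}$ is small, which also yields $\|\psi_i\|_{L^2}\le1+\varepsilon'+o(1)\le2$.

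\emph{The main obstacle} is the $L^2$ bound on the Bogovskii corrector in a thin strip whose upper boundary is only as regular as $\eta\in H^2(\omega)$. The data satisfy $\nabla\zeta_\sigma\cdot\phi_i=O(\sigma^{-1}|\phi_i|)$, so I need $\|\mathbf{w}_\sigma\|_{L^2}\lesssim\sigma\|\nabla\zeta_\sigma\cdot\phi_i\|_{L^2}\lesssim\|\phi_i\|_{L^2(S_\sigma)}$. My first attempt is to pull back to the flat strip $\omega\times(1-2\sigma,1-\sigma)$ via the Piola map and rescale vertically by $\sigma$. The relevant datum is essentially $\partial_z\zeta_\sigma\,\overline{\phi}_{i,3}$, and it has zero horizontal mean on each slice because the flux vanishes slice by slice. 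I would therefore build the corrector explicitly, solving the slice-wise problem horizontally on $\mathbb{T}^2$, so that its constant does not depend on $\sigma$. Since $\phi_i$ is smooth, $\|\phi_i\|_{L^2(S_\sigma)}\to0$ as $\sigma\to0$. If that anisotropic construction fails, the fallback is to accept a $\sigma$-dependent constant $C(\sigma)$ and exploit the smoothness of each $\phi_i$: the vertical component of $\phi_i$ near $\Gamma_\eta$ is Lipschitz, which compensates a polynomial loss in $\sigma$ before $\sigma_i$ is fixed.
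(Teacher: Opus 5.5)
Your Step 1 is sound. It uses the compactness of $L^2\hookrightarrow(H^s)'$, which is exactly what the paper's one-line contradiction argument relies on.

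The gap is the corrector bound in Steps 2--3, which you flag yourself, and it is not a technicality. The slice-wise horizontal solve gives $\|\mathbf{w}_\sigma\|_{L^2}\lesssim\|\nabla\zeta_\sigma\cdot\phi_i\|_{L^2}$ with a $\sigma$-independent constant, but \emph{without} the factor $\sigma$ you need. So you only get $\|\mathbf{w}_\sigma\|_{L^2}\lesssim\sigma^{-1}\|\overline{\phi}_{i,3}\|_{L^2(S_\sigma)}$.

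No other corrector vanishing on $\partial S_\sigma$ does better. In flattened variables, test $\operatorname{div}\mathbf{w}_\sigma=\zeta_\sigma'\,\overline{\phi}_{i,3}$ against a function $G=G(x)$ of the horizontal variables only:
\[
\Bigl|\int_\omega G\int_{1-2\sigma}^{1-\sigma}\zeta_\sigma'\,\overline{\phi}_{i,3}\,dz\,dA\Bigr|=\Bigl|\int_{S_\sigma}\mathbf{w}_\sigma\cdot\nabla G\,dx\Bigr|\le\sigma^{1/2}\|\nabla G\|_{L^2(\omega)}\|\mathbf{w}_\sigma\|_{L^2(S_\sigma)} .
\]
The left-hand side tends to $|\int_\omega G\,\overline{\phi}_{i,3}(\cdot,1)\,dA|$. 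Since the Piola map preserves normal fluxes, $\overline{\phi}_{i,3}(\cdot,1)$ is the flux $\phi_i\cdot\mathbf{n}^\eta$ through $\Gamma_\eta$. Hence $\|\mathbf{w}_\sigma\|_{L^2}\gtrsim\sigma^{-1/2}$ whenever this flux is not identically zero. Then $\|\psi_i\|_{L^2}\le 2$ fails, and your Step 3 bound $\sigma^{s/3}(\|\phi_i\|_{L^2}+\|\mathbf{w}_\sigma\|_{L^2})$ no longer tends to zero.

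Zero \emph{total} flux through each slice, which is all your compatibility argument uses, is not enough. Your fallback fails for the same reason: Lipschitz continuity only makes $\overline{\phi}_{i,3}$ close to its value on the top, which is exactly this flux. It does not make $\overline{\phi}_{i,3}$ small.

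The underlying problem is a missing hypothesis in the statement, not a flaw specific to your construction. Take $H(\Omega_\eta)$ as the paper defines it: smooth solenoidal fields vanishing only on $\Gamma_b$. Then $\phi=\mathcal{F}_\eta\xi/\|\mathcal{F}_\eta\xi\|_{L^2}$, with $\xi\neq0$ smooth and mean-free, is admissible and has flux $\xi$ through $\Gamma_\eta$. Suppose the lemma held for this $\phi$. Sending $\varepsilon\to0$ gives $\psi_\varepsilon$ bounded in $L^2$ with $\psi_\varepsilon\to\phi$ in $(H^s)'$, hence weakly in $L^2$. The zero extensions of the $\psi_\varepsilon$ are divergence-free in $\omega\times(0,\infty)$, because they vanish near $\Gamma_\eta$. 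So the zero extension of $\phi$ would also be divergence-free, which forces $\phi\cdot\mathbf{n}^\eta=0$ on $\Gamma_\eta$ — a contradiction. The paper's sketch does not address this either.

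The fix is to assume $\phi\cdot\mathbf{n}^\eta=0$ on $\Gamma_\eta$. This costs nothing for the paper, since the lemma is only applied to $\mathbf{u}_n-\mathcal{F}_{\eta_n}(\partial_t\eta_n)$, whose trace on $\Gamma_{\eta_n}$ vanishes. Under that assumption your fallback closes the argument. If the net points are Lipschitz up to $\Gamma_\eta$ with zero normal flux (this needs the corresponding density statement), then $|\overline{\phi}_{i,3}|\lesssim\sigma$ on $S_\sigma$. The slice-wise corrector then satisfies $\|\mathbf{w}_\sigma\|_{L^2}\lesssim\sigma^{1/2}$. If the net points can be taken compactly supported, you can simply set $\psi_i:=\phi_i$ and skip the corrector.

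Separately, Lemma~\ref{lm:Piola} requires $C^2$ geometry. You should flatten with respect to a smooth $\tilde\eta$ close to $\eta$, not with respect to $\eta\in H^2(\omega)$ itself.
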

\begin{proof}
    It follows by a contradiction argument which is standard, making use of the compact embedding $L^{2}\hookrightarrow\hookrightarrow\left(H^{s}\right)^{\prime}$ for $s>0$. A proof of a more general result can be found in \cite[Lemma A13]{LR14}.
\end{proof}
\subsection{The Stokes operator}
Consider the Stokes problem 
\begin{equation}\label{eqn:chap05-Stokes}
\begin{cases}
-\Delta\mathbf{u}+\nabla\pi=\mathbf{F} & \text{in}\ \Omega\\
\text{div}\mathbf{u}=0 & \text{in}\ \Omega\\
\mathbf{u}=\mathbf{g} & \text{on}\ \partial\Omega.
\end{cases}
\end{equation}
The following existence and regularity result can be found in \cite{Galdi-book}. In our setting, this theorem is applied to the reference domain $\Omega$ at the Galerkin approximation level, where the lateral boundary conditions are periodic.
\begin{theorem}\label{chap05-thm-Stokes-reg}
Let $\Omega \subset \mathbb{R}^{N}$ be a bounded domain with boundary of class $C^{k+2}$,  $k \in \mathbb{N}$. Let $1<p<\infty$, $\mathbf{F}\in W^{k,p}(\Omega)$ and $\mathbf{g}\in W^{k+2-\frac{1}{p},p}(\partial\Omega)$. Then there exists a unique solution $\left(\mathbf{u},\pi\right)\in W^{k+2,p}\left(\Omega\right)\times W^{k+1,p}\left(\Omega\right)$ such that 
\[
\left\Vert \mathbf{u}\right\Vert _{W^{k+2,p}\left(\Omega\right)}+\left\Vert \pi\right\Vert _{W^{k+2,p}\left(\Omega\right)}\lesssim_{k,N,p,\Omega}\left\Vert \mathbf{F}\right\Vert _{W^{k,p}\left(\Omega\right)}+\left\Vert \mathbf{g}\right\Vert _{W^{k+2-\frac{1}{p},p}\left(\partial\Omega\right)}.
\]

\end{theorem}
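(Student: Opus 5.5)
We would follow the classical route of Cattabriga and Agmon--Douglis--Nirenberg, as in \cite{Galdi-book}, in three steps: lift the boundary data, solve a weak problem, and bootstrap regularity through localisation. Two points in the statement need fixing first. We read the pressure bound as $\left\Vert \pi\right\Vert _{W^{k+1,p}}$, and we normalise $\int_{\Omega}\pi\,dx=0$, since $\pi$ is only unique up to a constant. We also impose the compatibility condition $\int_{\partial\Omega}\mathbf{g}\cdot\mathbf{n}\,dA=0$, which the divergence constraint forces.

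\emph{Step 1 (homogeneous data).} By the trace theorem we pick $\mathbf{G}\in W^{k+2,p}(\Omega)$ with $\mathbf{G}|_{\partial\Omega}=\mathbf{g}$ and $\left\Vert \mathbf{G}\right\Vert _{W^{k+2,p}}\lesssim\left\Vert \mathbf{g}\right\Vert _{W^{k+2-1/p,p}(\partial\Omega)}$. By the compatibility condition, $\int_{\Omega}\operatorname{div}\mathbf{G}\,dx=0$. We then apply the Bogovskii operator of Theorem~\ref{thm-chap04:Bogovskii} with $m=k+1$, which requires $\operatorname{div}\mathbf{G}$ to lie in $W_{0}^{k+1,p}$; if necessary we first correct $\mathbf{G}$ so that it does. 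Replacing $\mathbf{G}$ by $\mathbf{G}-B\operatorname{div}\mathbf{G}$ gives a solenoidal lift. Subtracting it reduces the problem to $\mathbf{g}=0$, with $\mathbf{F}$ replaced by $\mathbf{F}+\Delta\mathbf{G}\in W^{k,p}$.

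\emph{Step 2 (weak solution and pressure).} For $p=2$, Lax--Milgram on $\{\mathbf{v}\in W_{0}^{1,2}:\operatorname{div}\mathbf{v}=0\}$ gives $\mathbf{u}$. The pressure is then recovered by de Rham's theorem, or equivalently from the surjectivity of the divergence given by the Bogovskii operator, with $\left\Vert \pi\right\Vert _{L^{2}}\lesssim\left\Vert \nabla\pi\right\Vert _{W^{-1,2}}$. For general $p$ we proceed differently: we first prove the a priori $W^{2,p}$ estimate of Step 3 for smooth data, and then obtain existence by density together with a continuity or duality argument. Uniqueness follows by testing with $\mathbf{u}$ when $p\ge2$, and by duality when $p<2$.

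\emph{Step 3 (regularity, the main obstacle).} We cover $\partial\Omega$ by finitely many charts that flatten the boundary using $C^{k+2}$ diffeomorphisms, and we use a partition of unity. In the interior, $\chi\mathbf{u}$ solves a Stokes system in $\mathbb{R}^{N}$ with a right-hand side of lower order. Here the $L^{p}$ estimate follows from Calder\'{o}n--Zygmund bounds for the explicit fundamental solution (the Oseen tensor). Near the boundary, the transformed system is a Stokes-type system with variable coefficients in a half-space. This system is elliptic in the Agmon--Douglis--Nirenberg sense with complementing boundary conditions, and the half-space Green's tensor of Solonnikov gives the $W^{2,p}\times W^{1,p}$ estimate. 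The variable coefficients are handled by freezing them at a point, using small charts so that the perturbation is absorbed, and controlling lower-order terms by interpolation. Compactness and uniqueness (Peetre's argument) then remove the $\left\Vert \mathbf{u}\right\Vert _{L^{p}}$ term from the right-hand side.

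Higher regularity for $k\ge1$ follows by induction. We differentiate tangentially in the flattened charts, which is allowed because $\partial\Omega\in C^{k+2}$. The normal derivatives of $\mathbf{u}$ and $\pi$ are then recovered algebraically from the equations and the constraint $\operatorname{div}\mathbf{u}=0$.

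The half-space $L^{p}$ estimate is the genuinely hard analytic step; it is where we rely on Solonnikov and Agmon--Douglis--Nirenberg, in the form given in \cite{Galdi-book}. In the paper's application the domain is $\omega\times(0,1)$ with periodic lateral boundaries, so only the flat pieces $z=0$ and $z=1$ occur. In that case no flattening is needed, and the half-space estimates apply directly after a Fourier expansion in the tangential variables.
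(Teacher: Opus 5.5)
Your proposal is correct and follows essentially the same route as the paper. The paper gives no argument of its own and defers entirely to \cite{Galdi-book}, where the result (Cattabriga's theorem) is obtained by the same classical scheme you sketch: boundary lifting, weak solvability with pressure recovery, and localisation onto whole-space and half-space Green's tensor estimates. Your corrections to the statement are also the right ones: the pressure bounded in $W^{k+1,p}$, normalised by $\int_{\Omega}\pi\,dx=0$, and the flux condition $\int_{\partial\Omega}\mathbf{g}\cdot\mathbf{n}\,dA=0$.

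The only loose step is the unexplained correction of $\mathbf{G}$ to force $\operatorname{div}\mathbf{G}\in W_{0}^{k+1,p}$ and obtain a solenoidal lift. This step is unnecessary: keep the datum $\operatorname{div}(\mathbf{u}-\mathbf{G})=-\operatorname{div}\mathbf{G}\in W^{k+1,p}$ in the system. The Step 3 estimates must handle divergence data anyway, since your cut-offs already produce the term $\nabla\chi\cdot\mathbf{u}$.
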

\subsection{Eigenvectors of a compact, symmetric operator}
\begin{theorem}\label{thm:spectrum-compact-symmetric}
Let $H$ be a separable Hilbert space and $S:H\mapsto H$ be a compact and symmetric operator.Then there exists a countable orthonormal basis of $H$, formed by eigenvectors of $S$.
\end{theorem}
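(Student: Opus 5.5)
We follow the classical route for the spectral theorem of compact self-adjoint operators. We build the eigenvectors with nonzero eigenvalues by a variational (Rayleigh quotient) argument. We then show the orthogonal complement of their closed span lies in $\ker S$. Finally we complete with an orthonormal basis of $\ker S$, using separability of $H$.

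\emph{Step 1 (one extremal eigenvector).} If $S=0$ there is nothing to do beyond Step 3, so assume $S\neq 0$. We first show $\Vert S\Vert=\sup_{\Vert x\Vert=1}|(Sx,x)|=:M$. The inequality $M\le\Vert S\Vert$ is trivial. For the converse we use polarization: for symmetric $S$,
\[
4\operatorname{Re}(Sx,y)=(S(x+y),x+y)-(S(x-y),x-y),
\]
so $4|\operatorname{Re}(Sx,y)|\le M(\Vert x+y\Vert^{2}+\Vert x-y\Vert^{2})=2M(\Vert x\Vert^{2}+\Vert y\Vert^{2})$. Choosing $y=Sx\,\Vert x\Vert/\Vert Sx\Vert$ gives $\Vert Sx\Vert\le M\Vert x\Vert$.

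Take unit vectors $x_{n}$ with $(Sx_{n},x_{n})\to\lambda$, where $|\lambda|=M>0$. Pass to a subsequence with $x_{n}\rightharpoonup x$; compactness of $S$ then gives $Sx_{n}\to Sx$. Moreover
\[
\Vert Sx_{n}-\lambda x_{n}\Vert^{2}=\Vert Sx_{n}\Vert^{2}-2\lambda(Sx_{n},x_{n})+\lambda^{2}\le2\lambda^{2}-2\lambda(Sx_{n},x_{n})\to0.
\]
Hence $\lambda x_{n}\to Sx$ strongly. Since $\lambda\neq0$, $x_{n}\to x$ strongly, so $\Vert x\Vert=1$ and $Sx=\lambda x$. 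This is the step I expect to need the most care: attaining the supremum of the quadratic form is exactly where compactness enters.

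\emph{Step 2 (induction and decay).} Put $e_{1}:=x$ and $H_{1}:=\{e_{1}\}^{\perp}$. By symmetry, $SH_{1}\subset H_{1}$, since $(Sy,e_{1})=(y,Se_{1})=\lambda(y,e_{1})=0$. The restriction of $S$ to $H_{1}$ is again compact and symmetric, so we iterate Step 1 as long as the restriction is nonzero. This produces orthonormal eigenvectors $e_{k}$ with eigenvalues $|\lambda_{1}|\ge|\lambda_{2}|\ge\dots>0$, and
\[
\Vert S|_{\{e_{1},\dots,e_{k}\}^{\perp}}\Vert=|\lambda_{k+1}|.
\]
If the process does not terminate, then $|\lambda_{k}|\to0$. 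Otherwise $|\lambda_{k}|\ge c>0$ for all $k$, and then $\Vert Se_{k}-Se_{j}\Vert^{2}=\lambda_{k}^{2}+\lambda_{j}^{2}\ge2c^{2}$ for $k\neq j$. So $(Se_{k})$ has no convergent subsequence, although $(e_{k})$ is bounded, contradicting compactness.

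\emph{Step 3 (the remainder).} Let $N$ be the closed span of the $e_{k}$. Then $N^{\perp}$ is $S$-invariant and contained in every $\{e_{1},\dots,e_{k}\}^{\perp}$. Therefore $\Vert S|_{N^{\perp}}\Vert\le|\lambda_{k+1}|\to0$ (or $S|_{N^{\perp}}=0$ directly if the process stopped), so $N^{\perp}\subset\ker S$. The closed subspace $N^{\perp}$ of the separable space $H$ is separable. Gram–Schmidt applied to a countable dense subset gives a countable orthonormal basis of $N^{\perp}$, consisting of eigenvectors for the eigenvalue $0$. Its union with $\{e_{k}\}$ is countable, orthonormal, and spans a dense subspace of $N\oplus N^{\perp}=H$. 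This gives the required basis of eigenvectors.
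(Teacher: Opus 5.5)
Your proof is correct and complete. It also holds over $\mathbb{C}$, since $(Sx,x)$ is real for symmetric $S$.

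The paper does not prove this statement; it only cites Evans, whose argument takes a different route. Evans's proof:
\begin{itemize}
\item takes from Fredholm theory that $\sigma(S)\setminus\{0\}$ consists of countably many eigenvalues with finite-dimensional eigenspaces;
\item notes that eigenspaces for distinct eigenvalues are orthogonal;
\item lets $\widetilde H$ be the span of all eigenspaces, including $\ker S$, and shows it is dense: the restriction of $S$ to $\widetilde H^{\perp}$ is compact and symmetric with spectrum $\{0\}$, and $\inf_{\|u\|=1}(Su,u)$, $\sup_{\|u\|=1}(Su,u)$ lie in the spectrum, so the quadratic form of the restriction vanishes and, by polarization, the restriction is zero. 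Hence $\widetilde H^{\perp}\subset\ker S\subset\widetilde H$.
\end{itemize}

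Your variational (Hilbert--Schmidt type) construction instead produces the nonzero-eigenvalue eigenvectors one at a time by maximizing $|(Sx,x)|$. It therefore needs neither the Fredholm alternative nor the spectral-bound lemma, only weak sequential compactness of the unit ball and the definition of compactness. As a by-product you get the ordering $|\lambda_1|\ge|\lambda_2|\ge\dots$ with $|\lambda_{k+1}|=\|S|_{\{e_1,\dots,e_k\}^{\perp}}\|$ and $|\lambda_k|\to 0$. Evans's version is shorter once that spectral theory is available, and it directly presents the basis as a union of orthonormal bases of the eigenspaces. In both arguments separability is used only to obtain a countable orthonormal basis of the kernel part.
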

\begin{proof}
  See \cite[Appendix E, Theorem 7]{Ev10}  
\end{proof}
\paragraph{Acknowledgements.}
 
The author used the AI assistant Claude (Anthropic) to help edit and polish
the written text of this manuscript for spelling, grammar, and general style.
All mathematical content, results, and proofs are entirely the author's own work.

\bibliographystyle{plain}
\bibliography{bibliography} 

\end{document}